\documentclass[reqno]{amsart}

\usepackage{amssymb,amsmath,amscd,amsthm}

\usepackage{mathrsfs}
\usepackage{latexsym}
\usepackage{graphicx}
\usepackage{breakcites}

\oddsidemargin-0.2cm
\evensidemargin-0.2cm
\textwidth15cm
\textheight24cm
\topmargin-1cm

\newtheoremstyle{myplain}{}{}{\it}
{0pt}{\scshape}{}{ }{\thmname{#1}\thmnumber{ #2}\thmnote{ (#3)}}
\newtheoremstyle{mydefinition}{}{}{}
{0pt}{\scshape}{}{ }{\thmname{#1}\thmnumber{ #2}\thmnote{ (#3)}}

\theoremstyle{myplain}

\bibliographystyle{apalike}


    \newtheorem{Def}{Definition}[section]
        \newtheorem{Lem}[Def]{Lemma}

        \newtheorem{theo}[Def]{Theorem}
        \newtheorem{prop}[Def]{Proposition}
        \newtheorem{rem}[Def]{Remark}
        \newtheorem{hyp}[Def]{Hypothesis}
        \newtheorem{cor}[Def]{Corollary}

\renewcommand{\qed}{\nopagebreak\hfill$\Box$}

\newcommand{\skp}[2]{\mbox{$\left\langle #1\, , \, #2\right\rangle$}}
\newcommand{\skpd}[2]{\mbox{$\left\langle #1\, ,\,#2\right\rangle_{\ell^2}$}}

\newcommand{\natop}[2]{\genfrac{}{}{0pt}{}{#1}{#2}}
\newcommand{\expord}[1]{O\left(e^{-\frac{#1}{\ep}}\right)}

\DeclareMathOperator{\dist}{dist}
\DeclareMathOperator{\Span}{span}

\DeclareMathOperator{\supp}{supp}

\DeclareMathOperator{\spec}{spec}

\DeclareMathOperator{\Op}{Op}

\DeclareMathOperator{\diag}{diag}

\newcommand{\id}{\mathbf{1}}

\numberwithin{equation}{section}

\newcommand{\beqa}{\begin{eqnarray*}}
\newcommand{\eeqa}{\end{eqnarray*}}
\renewcommand{\hat}{\widehat}
\newcommand{\bauf}{\begin{itemize}}
\newcommand{\eauf}{\end{itemize}}
\newcommand{\ben}{\begin{enumerate}}
\newcommand{\een}{\end{enumerate}}
\newcommand{\ra}{\rightarrow}

\renewcommand{\O}{\Omega}
\newcommand{\ep}{\varepsilon}

\newcommand{\R}{{\mathbb R} }
\newcommand{\Z}{{\mathbb Z}}
\newcommand{\C}{{\mathbb C}}
\newcommand{\N}{{\mathbb N}}
\newcommand{\T}{{\mathbb T}}
\newcommand{\Hd}{{\mathbb H}}

\newcommand{\disk}{(\varepsilon {\mathbb Z})^d}

\newcommand{\Ce}{\mathscr C}

\newcommand{\De}{\mathscr D}
\newcommand{\E}{{\mathcal E}}
\newcommand{\F}{{\mathcal F}}

\title{Tunneling for a class of difference operators:\\ Complete Asymptotics}

\author{Markus Klein \and Elke Rosenberger}

\address{
Universit\"at Potsdam\\ Institut f\"ur
Mathematik \\ Am Neuen Palais 10\\ 14469 Potsdam}
\email{mklein@math.uni-potsdam.de, erosen@uni-potsdam.de}

\date{\today}



\keywords{Semi-classical Difference operator, tunneling, interaction matrix, asymptotic expansion, multi-well potential, eigenwalue splitting}

\begin{document}

\begin{abstract}
We analyze a general class of difference operators $H_\ep = T_\ep
+ V_\ep$ on $\ell^2(\disk)$, where $V_\ep$ is a multi-well
potential and $\ep$ is a small parameter. We
derive full asymptotic expansions of the prefactor of the exponentially small eigenvalue splitting due to interactions between two ``wells'' (minima)
of the potential energy, i.e., for the discrete tunneling effect. We treat both the case where there is a single minimal geodesic (with respect to the
natural Finsler metric induced by the leading symbol $h_0(x,\xi)$ of $H_\ep$) connecting the two minima and the case where the minimal geodesics form
an $\ell+1$ dimensional manifold, $\ell\geq 1$. These results on the tunneling problem are as sharp as the classical results for 
the Schr\"odinger operator in \cite{hesjo}. Technically, our approach is pseudodifferential and we adapt techniques from \cite{hesjo2} and \cite{hepar}
to our discrete setting.
\end{abstract}

\maketitle

\section{Introduction}

The aim of this paper is to derive complete asymptotic expansions for the interaction between two potential minima of a
difference operator on a scaled lattice, i.e., 
for the discrete tunneling effect.

We consider a rather general class of families of
difference operators $\left(H_\ep\right)_{\ep>0}$ on the Hilbert space $\ell^2(\disk)$, as the small parameter  $\ep>0$
tends to zero. The operator $H_\ep$ is given by
\begin{align} \label{Hepein}
H_\ep &= T_\ep + V_\ep ,  \quad\text{where}\quad
T_\ep  = \sum_{\gamma\in\disk} a_\gamma \tau_\gamma ,\\
(\tau_\gamma u)(x) &= u(x+\gamma) \, ,\qquad \quad (a_\gamma u)(x) := a_\gamma(x; \ep) u(x) \quad \mbox{for} 
\quad x,\gamma\in\disk \label{agammataugamma}
\end{align}
and $V_\ep$ is a multiplication operator which in leading order is given by a multiwell-potential
$V_0 \in \Ce^\infty (\R^d)$.

The interaction between neighboring
potential wells leads by means of the tunneling effect
to the fact that
the eigenvalues and eigenfunctions are different from those
of an operator with decoupled wells, which is realized by the direct sum of ``Dirichlet-operators''
situated at the several wells. Since the interaction is small, it can be treated as a perturbation of
the decoupled system.

In \cite{kleinro4}, we showed that it is possible to approximate the
eigenfunctions of the original Hamiltonian $H_\ep$ with respect to a fixed spectral interval
by (linear combinations of) the eigenfunctions of the
several Dirichlet operators situated at the different wells and we gave
a representation of $H_\ep$ with respect to a basis of Dirichlet-eigenfunctions.

In \cite{kleinro5} we gave estimates for the weighted $\ell^2$-norm of the difference between 
exact Dirichlet eigenfunctions and approximate 
Dirichlet eigenfunctions, which are constructed using the WKB-expansions given in \cite{kleinro3}.

In this paper, we consider the special case, that only Dirichlet operators at two wells have 
an eigenvalue (and exactly one) inside a given
spectral interval. Then it is possible to compute complete asymptotic expansions for the elements of the interaction matrix and
to obtain explicit formulae for the leading order term.\\

This paper is based on the thesis \cite{thesis}. It is the sixth in a series of papers 
(see \cite{kleinro} - \cite{kleinro5}); the aim is to develop an analytic approach
to the semiclassical eigenvalue problem and tunneling for $H_\ep$ which is comparable  in detail and
precision
to the well known analysis for the Schr\"odinger operator (see \cite{Si1} and
\cite{hesjo}). We remark that the analysis of tunneling has been extended to classes of pseudodifferential
operators in $\R^d$ in \cite{hepar} where tunneling is discussed for the Klein-Gordon and Dirac operator. 
This article in turn relies heavily on the ideas in the analysis of Harper's equation in \cite{hesjo2} and
previous results from \cite{sjo} covering classes of analytic symbols. Since our formulation of the spectral 
problem  for the operator in \eqref{Hepein} is pseudo-differential in spirit, it has been possible to adapt the
methods of \cite{hepar} to our case. Since our symbols are analytic only in the momentum variable $\xi$, but not in the 
space variable $x$, the results of \cite{sjo} do not all automatically apply.

Our motivation comes from
stochastic problems (see \cite{kleinro}, \cite{begk1}, \cite{begk2}). 
A large class of
discrete Markov chains analyzed in \cite{begk2}
with probabilistic
techniques falls into the framework of difference operators treated in this article.

We expect that similar results hold in the more general case that the Hamiltonian is a generator of a
jump process in $\R^d$, see \cite{klr} for first results in this direction.

\begin{hyp}\label{hyp1}
\begin{enumerate}
\item The coefficients $a_\gamma(x; \ep)$ in \eqref{Hepein} are
functions
\begin{equation}\label{agammafunk}
a: \disk \times \R^d \times (0,\ep_0] \ra \R\, , \qquad (\gamma, x,
\ep) \mapsto a_\gamma(x; \ep)\, ,
\end{equation}
satisfying the following conditions:
\ben
\item[(i)] They have an
expansion
\begin{equation}\label{agammaexp}
a_\gamma(x; \ep) = \sum_{k=0}^{N-1} \ep^k a_\gamma^{(k)}(x)  + R^{(N)}_\gamma (x; \ep)\, ,\qquad N\in\N^*\, ,
\end{equation}
where $a_\gamma \in\Ce^\infty(\R^d\times (0,\ep_0])$ and $a_\gamma^{(k)}\in\Ce^\infty(\R^d)$ for
all $\gamma\in\disk$ and $0\leq k \leq N-1$.
\item[(ii)] $\sum_{\gamma\in\disk} a_\gamma^{(0)}  = 0$ and $a_\gamma^{(0)}
\leq 0$ for $\gamma \neq 0$.
\item[(iii)] $a_\gamma(x; \ep) =
a_{-\gamma}(x+\gamma; \ep)$ for all $x \in \R^d, \gamma \in \disk$
\item[(iv)] For any $c>0$ and $\alpha\in\N^d$ there exists $C>0$ such that for $0\leq k\leq N-1$ uniformly
with respect to $x\in\R^d$ and $\ep\in (0,\ep_0]$.
\begin{equation}\label{abfallagamma}
\| \, e^{\frac{c|.|}{\ep}} \partial_x^\alpha a^{(k)}_.(x)\|_{\ell_\gamma^2(\disk)}\leq C \qquad\text{and} \qquad
\|\,
 e^{\frac{c|.|}{\ep}} \partial_x^\alpha R^{(N)}_.(x)\|_{\ell^2_\gamma(\disk)}
 \leq C\ep^N\; .
\end{equation}
\item[(v)]
$\Span \{\gamma\in\disk\,|\, a^{(0)}_\gamma(x) <0\}= \R^d$ for all $x\in\R^d$.
\een
\item
\ben
\item[(i)] The potential energy $V_\ep$ is the restriction to $\disk$ of a
function $\hat{V}_\ep\in\Ce^\infty (\R^d, \R)$ which has an expansion
\begin{equation}\label{hatVell}
\hat{V}_{\ep}(x) = \sum_{\ell=0}^{N-1}\ep^l V_\ell(x) + R_{N}(x;\ep)  \, ,\qquad N\in\N^*\, ,
\end{equation}
where $V_\ell\in\Ce^\infty(\R^d)$, $R_{N}\in \Ce^\infty (\R^d\times (0,\ep_0])$ for some $\ep_0>0$ and
for any compact set $K\subset \R^d$
there exists a constant $C_K$ such that $\sup_{x\in K} |R_{N}(x;\ep)|\leq C_K \ep^{N}$.
\item[(ii)]
$V_\ep$ is polynomially bounded and there exist constants $R, C > 0$ such that
$V_\ep(x) > C$ for all $|x| \geq R$ and $\ep\in(0,\ep_0]$.
\item[(iii)]
$V_0(x)\geq 0$ and it takes the value $0$ only at a finite number of non-degenerate minima
$x^j,\; j\in \mathcal{C} =\{1,\ldots , r\}$,
which we call potential wells.
\een
\een
\end{hyp}

We remark that for $T_\ep$ defined in
\eqref{Hepein}, under the assumptions given in Hypothesis \ref{hyp1}, one has $T_\ep = \Op_\ep^{\T}(t(.,.;\ep))$ 
(see Appendix \ref{app1} for definition and details of the quantization on the 
$d$-dimensional torus $\T^d := \R^d/ (2\pi \Z)^d$)  where
$t\in\Ce^\infty\left(\R^d\times\T^d\times (0,\ep_0]\right)$ is given by
\begin{equation}\label{talsexp}
t(x,\xi; \ep) = \sum_{\gamma\in\disk} a_\gamma (x; \ep) \exp
\left(-\tfrac{i}{\ep}\gamma\cdot\xi\right)\; .
\end{equation}
Here $t(x,\xi;\ep)$ is considered as a function on $\R^{2d}\times (0,\ep_0]$, which is
$2\pi$-periodic with respect to $\xi$. By condition (a)(iv) in Hypothesis \ref{hyp1}, 
the function $\xi\mapsto t(x, \xi; \ep)$ has
an analytic continuation to $\C^d$. 
Moreover for all $B>0$
\begin{equation}\label{agammasum} 
\sum_\gamma \left| a_\gamma(x; \ep)\right| e^{\frac{B|\gamma|}{\ep}} \leq C \qquad\text{and thus}\qquad
\sup_{x\in\R^d} |a_\gamma(x; \ep)| \leq C e^{-\frac{B|\gamma|}{\ep}}
\end{equation}
uniformly with respect to $x$ and $\ep$.
We further remark that (a)(iv) implies $\bigl|a_\gamma^{(k)}(x)- a_\gamma^{(k)}(x + h)\bigr|\leq C |h|$ for $0\leq k \leq N-1$
uniformly with respect to $\gamma\in\disk$ and $x,h\in\R^d$ and (a)(ii),(iii),(iv) imply that $T_\ep$ is symmetric and 
bounded and that for some $C>0$
\begin{equation}\label{Tvonunten}
 \skpd{u}{T_\ep u} \geq - C \ep \|u\|^2_{\ell^2}\;, \qquad u\in\ell^2(\disk)\; .
\end{equation}

Furthermore, we set
\begin{align}\label{texpand}
t(x,\xi;\ep) &= \sum_{k=0}^{N-1} \ep^k t_k (x,\xi) + \hat{t}_N(x,\xi;\ep)\quad
\text{with}\\
t_k(x, \xi) &:= \sum_{\gamma\in\disk} a_\gamma^{(k)}(x) e^{-\frac{i}{\ep}\gamma\xi}\, ,
\qquad 0\leq k \leq N-1\,,\nonumber\\
\hat{t}_N(x, \xi; \ep) &:= \sum_{\gamma\in\disk} R_\gamma^{(N)}(x; \ep)
e^{-\frac{i}{\ep}\gamma\xi}\nonumber\; .
\end{align}
Thus, in leading order, the symbol of $H_\ep$ is $h_0:=t_0+V_0$.
Combining \eqref{agammaexp} and (a)(iii) shows that the $2\pi$-periodic function $\R^d\ni\xi\mapsto t_0(x,\xi)$ 
is even with respect to $\xi\mapsto -\xi$, i.e.,
\begin{equation}\label{agammasym}
a_\gamma^{(0)}(x) = a_{-\gamma}^{(0)}(x)\, , \qquad x\in\R^d, \,\gamma\in\disk
\end{equation}
 (see \cite{kleinro}, Lemma 1.2) and 
therefore
\begin{equation}\label{tcosh} 
t_0(x,\xi) = \sum_{\gamma\in\disk} a_\gamma^{(0)}(x) \cos \bigl(\tfrac{1}{\ep}\gamma\cdot\xi\bigr) \; . 
\end{equation}
At $\xi=0$, for fixed $x\in\R^d$ the function $t_0$ defined in \eqref{texpand} has by Hypothesis \ref{hyp1}(a)(ii) an expansion
\begin{equation}\label{kinen}
t_0(x,\xi) = \skp{\xi}{B(x)\xi} + \sum_{\natop{|\alpha|=2n}{n\geq2}} B_\alpha (x) \xi^\alpha \qquad\text{as}\;\; |\xi|\to 0
\end{equation}
where $\alpha\in\N^d$, $B\in\Ce^\infty (\R^d, \mathcal{M}(d\times d,\R))$, for any $x\in\R^d$ the matrix $B(x)$  is
positive definite and symmetric and $B_\alpha$ are real functions. By straightforward calculations one gets
for $1\leq \mu,\nu\leq d$
\begin{equation}\label{Bnumu}
B_{\nu\mu}(x) = -\frac{1}{2\ep^2} \sum_{\gamma\in\disk} a_\gamma^{(0)}(x) \gamma_\nu\gamma_\mu\; .
\end{equation}

We set 
\begin{equation}\label{h0tilde}
\tilde{h}_0: \R^{2d} \rightarrow \R\, , \quad \tilde{h}_0(x,\xi) := - t_0(x, i\xi) - V_0(x)\; .
\end{equation}

In order to work in the context of \cite{kleinro2}, we shall assume

\begin{hyp}\label{hyp2}
At the minima $x^j,\, j\in\mathcal{C}$, of $V_0$, we assume that $t_0$ defined in \eqref{texpand} fulfills
\[ t_0(x^j, \xi) >0  \quad\text{if} \quad \xi \in\T^d\setminus \{0\} \, .\]
\end{hyp}

For any set $D\subset \R^d$, we denote the restriction to the lattice by  $D_\ep := D\cap \disk$.\\

By Hypothesis \ref{hyp1}, $\tilde{h}_0$ is even and hyperconvex\footnote{For a normed vector space $V$ we call a function $L\in\Ce^2(V,\R)$
hyperconvex, if there exists a constant $\alpha > 0$ such that
\[ D^2L|_{v_0}(v,v)\geq\alpha \|v\|^2\quad\text{for all}\quad v_0,v\in V\, .\] }
with respect to momentum. 
We showed in \cite{kleinro}, Prop. 2.9, that any function $f\in\Ce^\infty(T^*M,\R)$, which is hyperconvex in each fibre, is automatically
hyperregular\footnote{We recall from e.g. \cite{abma} that $f$ is hyperregular if its fibre derivative $D_Ff$ - related to
the Legendre transform - is a global diffeomorphism: $T^*M \rightarrow TM$.} (here $M$ denotes a smooth manifold, which in our context
is equal to $\R^d$).

We can thus
introduce  the associated Finsler distance $d = d_\ell$ on $\R^d$ as in \cite{kleinro}, Definition 2.16, where we set 
$\widetilde{M}:=\R^d\setminus\{x^k\, , \, k\in\mathcal{C}\}$.
Analog to \cite{kleinro}, Theorem 1.6, it can be shown that $d$ is locally Lipschitz and that for any $j\in\mathcal{C}$, the distance 
$d^j(x):= d(x, x^j)$ 
fulfills the generalized eikonal equation and inequality respectively 
\begin{align}\label{eikonal2} 
\tilde{h}_0 \bigl(x, \nabla d^j (x)\bigr) &= 0 \; ,\qquad x\in\Omega^j \\
\tilde{h}_0\bigl(x, \nabla d^j(x)\bigr) &\leq 0\; , \qquad x\in\R^d
\end{align}
where $\Omega^j$ is some neighborhood of $x_j$.
We remark that, assuming only Hypothesis \ref{hyp1},
it is possible that balls of finite radius with respect to the Finsler distance, i.e. $B_r(x):=\{y\in\R^d\,|\, d(x,y)\leq r\}, r<\infty$, 
are unbounded in the Euclidean distance (and thus
not compact). In this paper, we shall not discuss consequences of this effect.

Crucial quantities for the subsequent analysis are for $j,k\in \mathcal{C}$
\begin{equation}\label{distances}
S_{jk}:= d(x^j, x^k)\, , \qquad\text{and}\qquad  S_0:= \min_{j\neq k}d(x^j,x^k) \; .
\end{equation}

\begin{rem}\label{remhypmultwell}
Since $d$ is locally  Lipschitz-continuous (see \cite{kleinro}), it follows from \eqref{agammasum} that for any
$B>0$ and any bounded region $\Sigma\subset \R^d$ there exists a constant
$C>0$ such that
\begin{equation}\label{agammasupnorm2}
\sum_{\natop{\gamma\in\disk}{|\gamma|<B}} \Bigl\|a_\gamma (\,.\, ; \ep)
e^{\frac{d(.,.+\gamma)}{\ep}}\Bigr\|_{\ell^\infty(\Sigma)} \leq C\; .
\end{equation}
\end{rem}

For $\Sigma\subset\R^d$ we define the space
$\ell^2_{\Sigma_\ep}:= i_{\Sigma_\ep} \left(\ell^2(\Sigma_\ep)\right) \subset \ell^2(\disk)$ where
$ i_{\Sigma_\ep}$ denotes the embedding via zero extension.
Then we define the Dirichlet operator
\begin{equation} \label{HepD}
H_\ep^{\Sigma} :=\id_{\Sigma_\ep} H_\ep|_{\ell^2_{\Sigma_\ep}}  \;:\; \ell^2_{\Sigma_\ep} \rightarrow \ell^2_{\Sigma_\ep}
\end{equation}
with domain $\De (H_\ep^{\Sigma}) = \{u\in\ell^2_{\Sigma_\ep}\,|\, V_\ep u \in \ell^2_{\Sigma_\ep}\}$.

For a fixed spectral interval it is shown in \cite{kleinro4} that the difference
between the exact spectrum and the spectra of Dirichlet
realizations of $H_\ep$ near the different wells is
exponentially small and determined by the Finsler distance between the two nearest neighboring wells.
In the following we give additional assumptions.

The following hypothesis gives assumptions concerning the separation of the different wells using Dirichlet operators and the
restriction to some adapted spectral interval $I_\ep$.

\begin{hyp}\label{hypIMj}
\ben
\item There exist constants $\eta>0$ and $C>0$ such that for all $x\in\R^d$
\[ \left\| a_{(.)} (x; \ep) e^{\frac{1}{\ep} d(x, x+\, . \, )} |\, . \, 
|^{\frac{d + \eta}{2}} \right\|_{\ell^2(\disk)} \leq C \; . \]
\item For $j\in\mathcal{C}$, we choose a
compact manifold $M_j\subset \R^d$ with $\Ce^2$-boundary such that the following holds:
\ben
\item $x^j\in M_j$, $d^j\in\mathscr{C}^2(M_j)$ and $x^k\notin M_j$ for $k\in\mathcal{C}, \,k\neq j$.
\item Let $X_{\tilde{h}_0}$ denote the Hamiltonian vector field with respect to $\tilde{h}_{0}$ defined in \eqref{h0tilde}, 
$F_{t}$ denote the flow of $X_{\tilde{h}_0}$ and set
\begin{equation}\label{Lambdaplus}
\Lambda_{\pm} := \bigl\{ (x,\xi)\in T^*\R^{d}\, |\, F_{t}(x,\xi) \rightarrow (x^j,0)\quad \text{for}
\quad t \rightarrow \mp \infty \bigr\}  \; .
\end{equation}
Then, for $\pi : T^*\R^d \rightarrow \R^d$ denoting the bundle projection $\pi (x, \xi) = x$, we have 
\[ \Lambda_+(M_j):=\pi^{-1}(M_j) \cap \Lambda_+ = \bigl\{ (x, \nabla d^j(x)) \in T^*\R^d\, |\, x\in M_j\bigr\} \; . \]
Moreover $\pi\bigl(F_t(x,\xi)\bigr) \in M_j$ for all $(x,\xi)\in \pi^{-1}(M_j) \cap \Lambda_+$ and all $t\leq 0$. 
\een
\item
Given $M_j,\,  j\in\mathcal{C}$, let $I_\ep = [\alpha (\ep),\beta (\ep)]$ be an interval, such that
$\alpha (\ep),\beta (\ep) = O(\ep)$ for $\ep\to 0$. Furthermore there
exists a function $a(\ep)>0$ with the property $|\log a(\ep)| =
o\left(\frac{1}{\ep}\right),\, \ep\to 0$, such that none of the
operators $H_\ep,H_\ep^{M_1},\ldots H_\ep^{M_r}$ has spectrum in
$[\alpha(\ep)-2a(\ep),\alpha(\ep))$ or
$(\beta(\ep),\beta(\ep)+2a(\ep)]$.
\een
\end{hyp}

By \cite{kleinro}, Theorem 1.5, the base integral curves of $X_{\tilde{h}_0}$ on
$\R^d\setminus\{x_1,\ldots x_m\}$ with energy $0$ are geodesics with respect to
$d$ and vice versa.
Thus Hypothesis \ref{hypIMj}, 2(b), implies in particular that there is a unique minimal geodesic
between any point in $M_j$ and $x^j$.

Clearly, $\Lambda_+ (M_j)$ is a Lagrange manifold (by 2(b)) and since the flow $F_t$ preserves $\tilde{h}_0$, we have
$\Lambda_+(M_j)\subset \tilde{h}_0^{-1}(0)$ by \eqref{Lambdaplus}. Thus the eikonal equation 
$\tilde{h}_0(x, \nabla d^j(x)) =0$ holds for $x\in M_j$.
It follows from the construction of the solution of the
eikonal equation in \cite{kleinro3} that in fact $d^j\in \Ce^\infty(M_j)$. We recall that, in a small neighborhood of $x^j$, the
equation $\xi = \nabla d^j$ parametrizes by construction the outgoing manifold $\Lambda_+$ of the hyperbolic
fixed point $(x^j, 0)$ of $X_{\tilde{h}_0}$ in $T^*M_j$. Hypothesis \ref{hypIMj}, (2), ensures this globally.\\

Since the main theorems in this paper treat fine asymptotics for the interaction between two wells, we assume the following hypothesis.
It guarantees that neither the wells are to far from each other nor the difference between the 
Dirichlet eigenvalues is to big (otherwise the main term of the interaction matrix has the same
order of magnitude as the error term).\\

Given Hypothesis \ref{hypIMj}, we assume in addition

\begin{hyp}\label{hypkj}
\ben
\item
Only two Dirichlet operators $H_\ep^{M_j}$ and 
$H_\ep^{M_k}$, $j,k\in\mathcal{C},$ have an 
eigenvalue (and exactly one) in
the spectral interval $I_\ep$, which we denote by 
$\mu_j$ and $\mu_k$ respectively, with corresponding real Dirichlet eigenfunctions
$v_j$ and $v_k$.
\item We choose coordinates such that $x^j_d<0$ and $x^k_d>0$ and we set 
\begin{equation}\label{Hnull} 
\Hd_d := \{ x\in\R^d\, |\, x_d=0\} \; .
\end{equation}
\item For 
\[  S := \min_{r\in\mathcal{C}} \min_{x\in\partial M_r} d(x, x^r)\, , \]
let $0<a < 2S - S_0$
and $S_{jk} < S_0 + a$ and 
for all $\delta > 0$
\begin{equation}\label{mualphamu}
|\mu_j - \mu_k |= \expord{(a-\delta)} \; .
\end{equation}
We define the closed ``ellipse'' 
\begin{equation}\label{ellipse}
 E := \{ x\in \R^d\,|\, d^j(x)+ d^k(x) \leq S_0 + a \}
\end{equation}
and assume that $E\subset \stackrel{\circ}{M}_{j}\cup
\stackrel{\circ}{M}_{k}$.
\item For $R>0$ we set
\begin{equation}\label{HR}
\Hd_{d,R}:= \{ x\in \R^d\, |\, -R<x_d<0\} 
\end{equation}
and choose $R>0$ large enough such that 
\begin{equation}\label{EHR}
E\cap \{x\in\R^d\,|\, x_d\leq -R\} = \emptyset\, , \qquad E\cap \Hd_{d,R} \subset \stackrel{\circ}{M}_j 
\qquad \text{and}\qquad E\cap \Hd_{d,R}^c \subset   \stackrel{\circ}{M}_k\, .
\end{equation}
\een
\end{hyp}

The tunneling between the wells $x^j$ and $x^k$ can be described by the interaction term 
\begin{equation}\label{wjkdef}
w_{jk} = \skpd{v_j}{\bigl(\id - \id_{M_k}\bigr) T_\ep v_k} = \skpd{v_j}{\bigl[T_\ep, \id_{M_k}\bigr] v_k}
\end{equation}
introduced in \cite{kleinro4}, Theorem 1.5 (cf. Theorem \ref{ealphafalpha}).

The main topic of this paper is to derive complete asymptotic expansions for $w_{jk}$, using 
the approximate eigenfunctions we constructed in \cite{kleinro5}.

\begin{rem}\label{wjkalt}
\ben
\item
Since the set $\Hd_{d,R}$ fulfills the assumptions on the set $\Omega$ introduced in \cite{kleinro4}, it follows from 
\cite{kleinro4}, Proposition 1.7, that the
interaction $w_{jk}$ between the two wells $x^j$ and $x^k$ (cf. Theorem  \ref{ealphafalpha}) is given by
\begin{equation}\label{wjkaltglg}
w_{jk} = \skpd{ [T_\ep, \id_{\Hd_{d,R}}]\id_{E}
v_j}{\id_{E} v_k} +
\expord{S_0 + a -\eta}  \, , \qquad \eta>0\; .
\end{equation}
In order to use symbolic calculus to compute asymptotic expansions of $w_{jk}$, we will 
smooth the characteristic function $\id_{\Hd_{d,R}}$ by convolution with a Gaussian.
\item It follows from the results in \cite{kleinro2} that, by Hypothesis \ref{hypIMj},(3), the Dirichlet eigenvalues
$\mu_j$ and $\mu_k$ lie in $\ep^{\frac{3}{2}}$-intervals around some eigenvalues of the associated harmonic oscillators at the wells 
$x^j$ and $x^k$ as constructed in \cite{kleinro5}, (1.19). Thus we can use the approximate eigenfunctions and the weighted
estimates given in \cite{kleinro5}, Theorem 1.7 and 1.8 respectively.
\een
\end{rem}

Next we give assumptions on the geometric setting, more precisely on the geodesics
between the two wells given in Hypothesis \ref{hypkj}. First we consider the generic setting, where
there is exactly one minimal geodesic between the two wells.
Later on, we consider the more general situation where the minimal geodesics build a manifold. 

We recall from \cite{kleinro} that, as usual, geodesics are the critical points of the length functional of the Finsler structure induced by $\tilde{h}_0$. 

\begin{hyp}\label{hypgeo1} 
There is a unique minimal geodesic $\gamma_{jk}$ (with respect to the Finsler distance $d$) between 
the wells $x^j$ and $x^k$. Moreover, $\gamma_{jk}$ intersects the hyperplane $\Hd_d$
transversally at some point $y_0 = (y'_0, 0)$ (possibly after redefining the origin) and is nondegenerate at $y_0$ in the sense that, 
transversally to $\gamma_{jk}$, the function $d^k + d^j$ 
changes quadratically, i.e., the restriction of $d^j(x) + d^k(x)$ to $\Hd_d$ has a positive Hessian at $y_0$.
\end{hyp}

\begin{figure}[htbp]
\centering
\includegraphics[width=15cm]{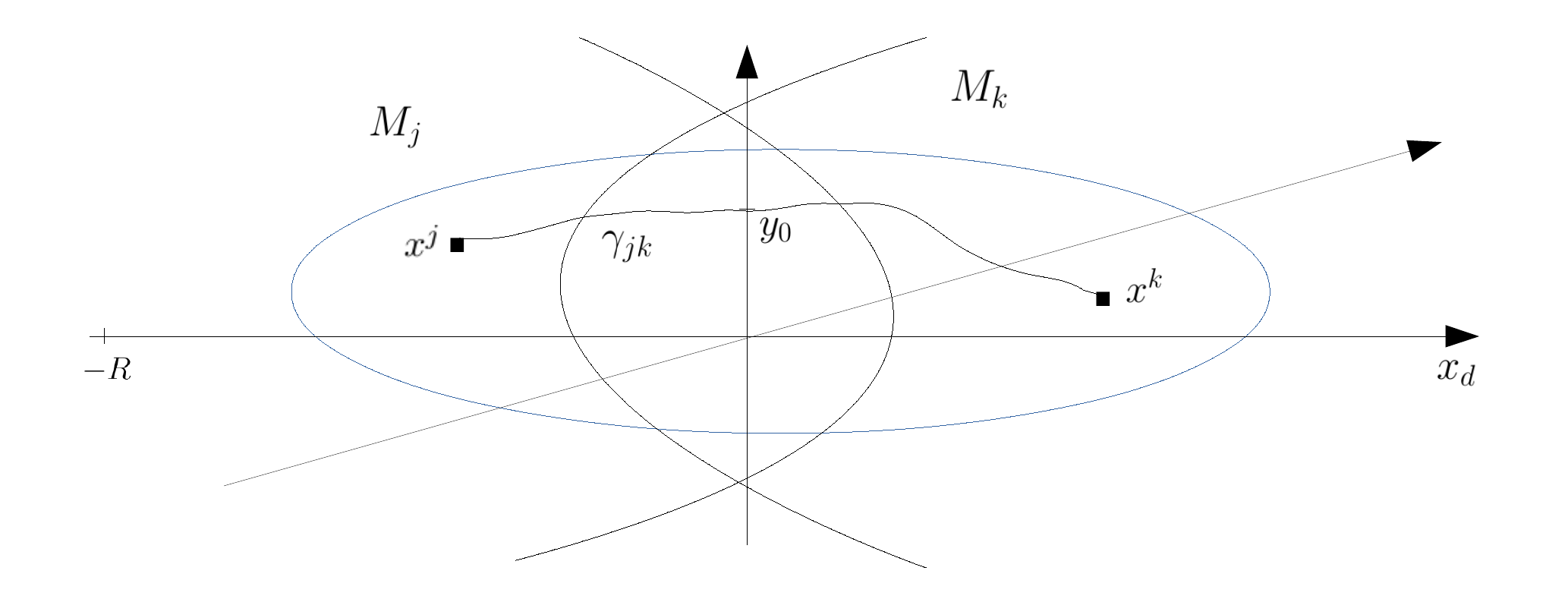}
\caption{The regions $E$, $M_j$ and $M_k$, the point $y_0$ and the curve $\gamma_{jk}$}
\label{Bild2}
\end{figure}

\begin{theo}\label{wjk-expansion}
Let $H_\ep$ be a Hamiltonian as in \eqref{Hepein} satisfying Hypotheses \ref{hyp1} and \ref{hyp2} and assume 
that Hypotheses \ref{hypIMj}, \ref{hypkj} and \ref{hypgeo1} are fulfilled.
For $m=j,k$, let $b^m\in\Ce_0^\infty(\R^d\times (0,\ep_0])$and $b^m_\ell\in \Ce^\infty_0 (\R^d), \, \ell\in \Z/2, \ell\geq -N_m$ 
for some $N_m\in \N$ be such
that the approximate eigenfunctions $\hat{v}_m^\ep\in\ell^2(\disk)$ of the
Dirichlet operators $H_\ep^{M_m}$ constructed in 
\cite{kleinro5}, Theorem 1.7, have asymptotic expansions
\begin{equation} \label{hatvm}
\hat{v}_m^\ep (x; \ep) = \ep^{\frac{d}{4}} e^{-\frac{d^m(x)}{\ep}} b^m(x; \ep)\quad\text{with}\quad 
b^m(x;\ep) \sim \sum_{\natop{\ell\in \Z/2}{\ell\geq -N_m}} \ep^\ell b^m_\ell \; .
\end{equation}
Then there is a sequence $(I_p)_{p\in \N/2}$ in $\R$ such that
\[ 
w_{jk} \sim \ep^{\frac{1}{2}-(N_j + N_k)} e^{-S_{jk}/\ep} \sum_{p\in \N/2} \ep^p I_p \, . 
\]
The leading order is given by
\begin{equation}\label{0thm1} 
I_0 =  \frac{(2\pi)^{\frac{d-1}{2}}}{\sqrt{\det D^{2}_\perp(d^j + d^k) (y_0)}} b^k_{-N_k}(y_0) 
 \sum_{\eta\in\Z^d} \tilde{a}_\eta(y_0) 
\eta_d \sinh \bigl(\eta\cdot \nabla d^j (y_0)\bigr) b^j_{-N_j}(y_0) 
\end{equation}
where we set $\tilde{a}_{\frac{\gamma}{\ep}}(x) := a_\gamma^{(0)}(x)$ and 
\begin{equation}\label{0athm1} 
D^{2}_\perp f := \Bigl( \partial_r\partial_p f \Bigr)_{1\leq r,p\leq d-1}\; . 
\end{equation}
\end{theo}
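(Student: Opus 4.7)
My plan is to combine three ingredients: (i) substitute the WKB functions $\hat{v}_m^\ep$ of \cite{kleinro5} for the exact Dirichlet eigenfunctions $v_m$ in the representation \eqref{wjkaltglg}; (ii) expand the commutator and convert the resulting lattice sum into a $(d-1)$-dimensional integral over $\Hd_d$; (iii) apply Laplace's method at $y_0$, using the nondegeneracy granted by Hypothesis~\ref{hypgeo1}. As suggested in Remark~\ref{wjkalt}, I would first pre-smooth $\id_{\Hd_{d,R}}$ by Gaussian convolution at scale $\sqrt{\ep}$ so as to make the symbolic calculus of \cite{hepar,hesjo2} applicable; the smoothing error is $O(e^{-S_{jk}/\ep}e^{-c/\ep})$ for some $c>0$, since away from $y_0$ within $\Hd_d$ the phase $d^j+d^k$ strictly exceeds its minimum $S_{jk}$ by Hypothesis~\ref{hypgeo1}. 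The substitution error in step (i) is controlled similarly by combining the weighted $\ell^2$-estimates of \cite{kleinro5}, Thm.~1.8, with the exponential bound \eqref{agammasum}.

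Second, using $[T_\ep,\chi]u(x)=\sum_\gamma a_\gamma(x;\ep)[\chi(x+\gamma)-\chi(x)]u(x+\gamma)$ together with \eqref{hatvm}, I would analyze the resulting lattice double sum. For $\gamma=\ep\eta$ with $\eta_d\neq 0$ (the case $\eta_d=0$ contributes zero), the jump $[\chi(x+\ep\eta)-\chi(x)]$ equals $-\sgn(\eta_d)$ on a strip of exactly $|\eta_d|$ lattice layers adjacent to $\Hd_d$ and vanishes near $y_0$ otherwise. Using the Taylor expansion $d^j(x+\ep\eta)=d^j(x)+\ep\eta\cdot\nabla d^j(x)+O(\ep^2|\eta|^2)$ inside $e^{-d^j(x+\ep\eta)/\ep}$ (the prefactor $1/\ep$ lets $\eta\cdot\nabla d^j$ survive at leading order, while higher Taylor terms are subleading thanks to the decay $|a^{(0)}_{\ep\eta}|\leq Ce^{-B|\eta|}$ from \eqref{agammasum}), summing over the $|\eta_d|$ strip layers, converting the transverse lattice sum $\sum_{x'\in(\ep\Z)^{d-1}}$ into $\ep^{-(d-1)}\int_{\Hd_d}dx'$, and using the symmetry $a^{(0)}_\gamma=a^{(0)}_{-\gamma}$ from \eqref{agammasym} to rewrite $-\sum_\eta\tilde{a}_\eta(x',0)\,\eta_d\,e^{-\eta\cdot\nabla d^j(x',0)}$ as $\sum_\eta\tilde{a}_\eta(x',0)\,\eta_d\sinh(\eta\cdot\nabla d^j(x',0))$, I obtain
\[
w_{jk}\sim \ep^{1-d/2}\int_{\Hd_d}e^{-(d^j+d^k)(x',0)/\ep}b^j(x',0;\ep)b^k(x',0;\ep)\sum_{\eta\in\Z^d}\tilde{a}_\eta(x',0)\,\eta_d\sinh\bigl(\eta\cdot\nabla d^j(x',0)\bigr)\,dx'\, .
\]

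Third, by Hypothesis~\ref{hypgeo1} the function $(d^j+d^k)|_{\Hd_d}$ has a unique nondegenerate minimum at $y_0$ with value $S_{jk}$ and Hessian $D^2_\perp(d^j+d^k)(y_0)$; Laplace's method on the $(d-1)$-dimensional integral therefore produces $(2\pi\ep)^{(d-1)/2}/\sqrt{\det D^2_\perp(d^j+d^k)(y_0)}$ multiplied by $e^{-S_{jk}/\ep}$. Combined with the leading amplitudes $b^m\sim\ep^{-N_m}b^m_{-N_m}+O(\ep^{-N_m+1/2})$ and the $\ep$-powers $\ep^{1-d/2+(d-1)/2}=\ep^{1/2}$, this reproduces \eqref{0thm1}. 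The full asymptotic series $\sum_{p\in\N/2}\ep^p I_p$ emerges by carrying each of the above expansions to all orders: higher-order Laplace corrections on $\Hd_d$, higher Taylor expansions of $d^j,d^k,b^j,b^k$ around $y_0$, and the Euler--Maclaurin corrections of the Riemann sum $\sum_{x'}\to\int_{\Hd_d}$, all of which generate half-integer powers of $\ep$ in accordance with the indexing $\ell\in\Z/2$ in \eqref{hatvm}.

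The principal technical obstacle is to justify, with an error strictly better than $\ep^{1/2-(N_j+N_k)}e^{-S_{jk}/\ep}$, the combined passage from the sharp discrete cutoff $\id_{\Hd_{d,R}}$ (and the discrete lattice sum) to the smooth Laplace integral. Since the symbol $t(x,\xi;\ep)$ of $T_\ep$ is analytic in $\xi$ but only $\Ce^\infty$ in $x$, the analytic-symbol framework of \cite{sjo} does not apply verbatim, and an adaptation of the mixed-regularity pseudodifferential calculus of \cite{hepar,hesjo2} is required, tracking the exponential lattice tail $e^{-B|\eta|}$ so that the combined Gaussian-smoothing and Riemann-sum errors stay below threshold uniformly in $\eta$.
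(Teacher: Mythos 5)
Your proposal and the paper's proof are genuinely different routes to the same formula. The paper does \textbf{not} reduce to a $(d-1)$-dimensional Laplace integral over $\Hd_d$. Instead (Propositions \ref{prop1}--\ref{prop4}) it replaces $\id_{\Hd_{d,R}}$ by the superposition $\int_{-R}^0 \pi_s\,ds$ of Gaussians $\pi_s$ and, via a von~Neumann series, writes the commutator symbol as an exact $s$-derivative $\ep\partial_s q_s$ plus an $S^\infty$ remainder; the $s$-integral then collapses to the boundary term $\ep Q_0$, and after conjugation by $e^{(d^j + C_0(\cdot)_d^2/2)/\ep}$ one arrives at a genuinely $d$-dimensional discrete Laplace sum with phase $\varphi = d^j + d^k + C_0 x_d^2 - S_{jk}$. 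The $C_0 x_d^2$ term supplies the missing $x_d$-stiffness, the determinant factors as $\det D^2\varphi(y_0) = 2C_0\det D^2_\perp(d^j+d^k)(y_0)$, and the $C_0$ cancels against the Gaussian normalization. Your route — combinatorial strip analysis of the sharp jump $\id_{\Hd_{d,R}}(x+\ep\eta) - \id_{\Hd_{d,R}}(x)$ followed by Laplace on $\Hd_d$ — is the discrete analogue of the classical Helffer--Sjöstrand surface-current computation, and it does yield \eqref{0thm1} at leading order (the signs and the $\sinh$ via \eqref{agammasym} are exactly right, and the power bookkeeping $\ep^{d/2-(d-1)+(d-1)/2} = \ep^{1/2}$ checks out).

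Two issues. First, an internal inconsistency: you open by pre-smoothing $\id_{\Hd_{d,R}}$ with a Gaussian ``so as to make the symbolic calculus applicable'', but the entire second step then hinges on the \emph{sharp} jump being exactly $-\sgn(\eta_d)$ on exactly $|\eta_d|$ layers. You should pick one route: either the sharp-cutoff combinatorics (in which case the Gaussian smoothing is irrelevant, and the claimed $O(e^{-S_{jk}/\ep}e^{-c/\ep})$ smoothing error is moot), or the smoothed version (in which case the paper's Propositions \ref{prop2}--\ref{prop4} become the appropriate machinery and there is no clean layer count). Second, the passage from the triple sum $\sum_{x'}\sum_\eta\sum_{m=1}^{|\eta_d|}$ to your intermediate formula is not uniform in the way stated: replacing each of the $|\eta_d|$ layers at $x_d=-\ep m$ by its $x_d=0$ value produces corrections that grow like $|\eta_d|^{k+1}$ before being damped by $e^{-B|\eta|}$, while simultaneously the $x_d$-derivative of $e^{-(d^j+d^k)/\ep}$ is only $O(\ep^{-1/2})$ (not $O(1)$) on the $\sqrt\ep$-scale around $y_0$ because $\partial_d(d^j+d^k)(y_0)=0$ but the mixed Hessian need not vanish. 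That this indeed organizes into a series in $\ep^{1/2}$, and that the lattice-to-integral conversion is then $O(\ep^\infty)$, requires the Poisson-summation lemma of \cite{giacomo} (Lemma \ref{LemmaC1} / \eqref{2thm1}) applied at scale $h=\sqrt\ep$ --- Euler--Maclaurin, as you invoke, does not by itself control the $\ep^{-k}$ growth of the derivatives of the integrand. These are tractable but nontrivial, and the paper's $d$-dimensional reformulation is precisely designed to fold the layer-counting, the $x'$-sum, and the $x_d$-spreading into a single application of the giacomo lemma.
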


\begin{rem}
\ben
\item
The sum on the right hand side of \eqref{0thm1} is equal to the leading order of $\frac{1}{i} \Op_\ep^{\T}\bigl(w\bigr) \Psi b^j(y_0)$ where
\begin{equation}\label{current} 
w(x,\xi) := \partial_{\xi_d} t_0 (x, \xi - i \nabla d^j(x)) = 
-i \sum_{\gamma\in\disk} a_\gamma^{(0)}(x) \frac{\gamma_d}{\ep} e^{-\frac{i}{\ep}\gamma\cdot (\xi - i\nabla d^j (x))}\; . 
\end{equation}
To interpret this term (and formula \eqref{0thm1}) semiclassically, observe that $v(x,\xi) := \partial_\xi t_0 (x,\xi)$
is - by Hamilton's equation - the velocity field associated to the leading order kinetic Hamiltonian $t_0$ (or Hamiltonian $h_0 = t_0 + V_0$), 
evaluated on the physical phase space $T^*\R^d$. In \eqref{current}, with respect to the momentum variable, 
the phase space is pushed into the complex domain, over the
region $M_j\subset \R^d$ from Hypothesis \ref{hypIMj}
\[ T^*M_j \ni (x, \xi) \mapsto (x, \xi - i \nabla d^j(x)) \in \Lambda \subset T^*M_j\otimes \C\subset \C^{2d}\; .          \]
The smooth manifold $\Lambda$ lies as a graph over $T^*M_j$ and projects diffeomorphically. In some sense the complex deformation $\Lambda$
structurally stays as close a possible to the physical phase space $T^*M_j$, being both $\R$-symplectic and $I$-Langrangian.

We recall the basic definitions (see \cite{sjo} or \cite{hesjo3}): The standard symplectic form in $\C^{2d}$ is $\sigma = \sum_j d\zeta_j \wedge dz_j$ where
$z_j = x_j + i y_j$ and $\zeta_j = \xi_j + i \eta_j$. It decomposes into 
\begin{align*} 
\Re \sigma &= \frac{1}{2}(\sigma + \bar{\sigma}) = \sum_j d\xi_j \wedge dx_j - d\eta_j \wedge dy_j\\
\Im \sigma &= \frac{1}{2}(\sigma - \bar{\sigma}) = \sum_j d\xi_j \wedge dy_j + d\eta_j \wedge dx_j\, .
\end{align*}
Both $\Re \sigma$ and $\Im \sigma$ are real symplectic forms in $\C^{2d}$, considered as a real space of dimension $4d$.
A submanifold $\Lambda$ of $\C^{2d}$ (of real dimension $2d$) is called $I$-Langrangian if it is Lagrangian for $\Im \sigma$, and
$\Lambda$ is called $\R$-symplectic if $\Re\sigma|_\Lambda$ - which denotes the pull back under the embedding 
$\Lambda \hookrightarrow \C^{2d}$ - is non-degenerate. In our example, one checks in a straightforward way that both 
$T^*M_j$ and $\Lambda$ are $\R$-symplectic and $I$-Langrangian. In this paper we shall not explicitly use this structure of $\Lambda$ 
(it is essential for the microlocal theory of resonances, see \cite{hesjo3}); rather, the manifold $\Lambda$ appears somewhat mysteriously through explicit 
calculation.

Still, it seems to be physical folklore that both tunneling and resonance phenomena are related to complex deformations of 
phase space. Our formulae make this precise in the following sense: The leading order $I_0$ of the tunneling is given by 
the velocity field $v|_\Lambda$ (in the direction $e_d$) where $\Lambda$ is the $\R$-symplectic, $I$-Langrangian manifold obtained as
deformation of $T^*M_j$ through the field $\nabla d^j$ induced by the Finsler distance $d^j$, the leading amplitudes $b^j_{-N_j}(y_0)$,
$b^k_{-N_k}(y_0)$ of the WKB expansions and the ``hydrodynamical factor'' $\sqrt{\det D^{2}_\perp(d^j + d^k) (y_0)}$ describing 
deviations from the shortest path connecting the two potential minima.

Thus, in some sense, tunneling is described by a matrix element of a current (at least in leading order). On physical grounds it is 
perhaps very plausible that such formulae should hold in the semiclassical limit in any case which exhibits a leading order
Hamiltonian. That this is actually true in the case of difference operators considered in this article is
conceptually a main result of this paper. For pseudodifferential operators in $\R^d$ this is proven in \cite{hepar}.
For a less precise, but conceptually related, statement see \cite{kleinro4}.
\item If $\mu_j$ and $\mu_k$ correspond to the ground state energy of the harmonic oscillators associated to the Dirichlet 
operators at the wells (see \cite{kleinro5}), we have $N_j = N_k = 0$.
Moreover $b^j(y_0)$ and $b^k(y_0)$ are strictly positive. Thus if $\gamma_{jk}$ intersects $\Hd_d$ orthogonal, it follows from
Hypothesis \ref{hyp1}, (1)(ii), that
$I_0>0$.

\een
\end{rem}

 If there are finitely many geodesics connecting $x^j$ and $x^k$, separated away from the endpoints, their contributions to the 
interaction $w_{jk}$ simply add up (as conductances working in parallel do). 
This is more complicated (but conceptually similar) in the case where the minimal geodesics form a manifold.

\begin{hyp}\label{hypgeo2} 
For some $1\leq \ell < d$, the minimal geodesics from $x^j$ to $x^k$ (with respect to the Finsler distance d) form an
orientable $\ell+1$-dimensional submanifold $G$ of $\R^d$ (possibly singular at $x^j$ and $x^k$). Moreover $G$ intersects the hyperplane 
$\Hd_d$
transversally (possibly after redefining the origin). Then  
\begin{equation}\label{Gnull}
 G_0:= G \cap \Hd_d
\end{equation}
is a $\ell$-dimensional submanifold of $G$. 
\end{hyp}

We shall show in Step 2 of the proof of Theorem \ref{wjk-expansion2} below (assuming only Hypothesis \ref{hypgeo2}) that any system of linear
independent normal vector fields $N_m,\, m=\ell+1, \ldots , d$, on $G_0$ possesses an extension to a suitable tubular neighborhood of $G_0$
as a family of commuting vector fields. In particular, with such a choice of vector fields $N_m,\, m=\ell + 1, \ldots d$, 
\begin{equation}\label{transversHessian} 
D^2_{\perp, G_0}\bigl(d^j + d^k\bigr) := \Bigl( N_m N_n (d^j + d^k)|_{G_0}\Bigr)_{\ell+1\leq m,n \leq d-1} 
\end{equation}
is a symmetric matrix. We assume

\begin{hyp}\label{hypgeo2a}
The transverse Hessian $ D^2_{\perp, G_0}\bigl(d^j + d^k\bigr)$ of $d^j + d^k$ at $G_0$ defined in \eqref{transversHessian} is positive for all points
on $G$ (which we shortly denote as $G$ being non-degenerate at $G_0$).
\end{hyp}

\begin{theo}\label{wjk-expansion2}
Let $H_\ep$ be a Hamiltonian as in \eqref{Hepein} satisfying Hypotheses \ref{hyp1} and \ref{hyp2} and assume 
that Hypotheses \ref{hypIMj}, \ref{hypkj}, \ref{hypgeo2} and \ref{hypgeo2a} are fulfilled.
For $m=j,k$, let $\hat{v}_m^\ep$ be as in \eqref{hatvm}.
Then there is a sequence $(I_p)_{p\in \N/2}$ in $\R$ such that
\[ w_{jk} \sim \ep^{-(N_j + N_k)} \ep^{(1-\ell)/2} e^{-S_{jk}/\ep} \sum_{p\in \N/2} \ep^p I_p \, . \]
The leading order is given by
\begin{equation}\label{0thm2}
 I_0 = (2\pi)^{(d-(\ell+1))/2} \int_{G_0} \frac{1}{\sqrt{\det D^2_{\perp, G_0}\bigl(d^j + d^k\bigr)(y)}} b^k(y) b^j(y) \sum_{\eta \in\Gamma} 
\tilde{a}_\eta(y) \eta_d \sinh \bigl(\eta\cdot \nabla d^j (y)\bigr) \, d\sigma (y)
\end{equation}
where we used the notation given in Theorem \ref{wjk-expansion}.
\end{theo}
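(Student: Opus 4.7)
The overall strategy is to follow the reduction carried out for the generic case of Theorem \ref{wjk-expansion} up to the point of evaluating a Laplace-type integral on $\Hd_d$, and then to apply stationary phase with a non-degenerate critical manifold $G_0$ in place of a single non-degenerate critical point $y_0$. Starting from the equivalent expression \eqref{wjkaltglg} for $w_{jk}$ given in Remark \ref{wjkalt}, I would substitute the WKB approximations $\hat v_j^\ep, \hat v_k^\ep$ from \eqref{hatvm} for the exact eigenfunctions $v_j, v_k$. The weighted $\ell^2$ estimates of \cite{kleinro5} together with $S_{jk} < S_0 + a$ from Hypothesis \ref{hypkj}(3) ensure that the resulting error is absorbed in the $\expord{S_0 + a - \eta}$ tail of \eqref{wjkaltglg}. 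After smoothing $\id_{\Hd_{d,R}}$ by Gaussian convolution (Remark \ref{wjkalt}(1)) so that the commutator $[T_\ep, \chi]$ is accessible to symbolic calculus, I would insert the WKB ansatz, expand $[T_\ep,\chi]u(x) = \sum_\gamma a_\gamma(x;\ep)(\chi(x+\gamma)-\chi(x))\,u(x+\gamma)$, use the eikonal equation \eqref{eikonal2} for $d^j$ to simplify the exponential phases, and pass from the discrete sum to an integral exactly as in Theorem \ref{wjk-expansion}. This reduces the problem to the Laplace asymptotics of
\[
\int_{\Hd_d}e^{-(d^j(y)+d^k(y))/\ep}\, F(y;\ep)\, dy',
\]
where $F$ has an asymptotic expansion in half-integer powers of $\ep$ with leading coefficient exactly $b^j(y)b^k(y)\sum_\eta \tilde a_\eta(y)\eta_d \sinh(\eta\cdot \nabla d^j(y))$, while the $\ep$-powers accumulated outside the integral (WKB normalizations, lattice-to-integral conversion, and the width of the strip supporting the commutator) combine to $\ep^{(1-d)/2 - (N_j + N_k)}$.

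Before applying Laplace's method I would prove the auxiliary assertion announced just above Hypothesis \ref{hypgeo2a}: a linearly independent normal frame $N_{\ell+1}, \ldots, N_{d-1}$ on $G_0$ extends commutingly to a tubular neighborhood. Orientability of $G$ together with its transversality to $\Hd_d$ implies that the normal bundle of $G_0$ in $\Hd_d$ is trivializable; fixing an auxiliary Riemannian metric, the normal exponential chart $(y,s)\mapsto \exp_y\bigl(\sum_m s_m N_m(y)\bigr)$ then pulls back the coordinate fields $\partial/\partial s_m$ to commuting extensions of the $N_m$. This makes the transverse Hessian \eqref{transversHessian} a well-defined symmetric matrix, and Hypothesis \ref{hypgeo2a} promotes it to a positive definite one.

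The function $\Phi := d^j + d^k - S_{jk}$ vanishes identically on $G_0$ by the definition of minimal geodesics and is transversally non-degenerate by the previous paragraph. Applying Laplace's method with a non-degenerate critical submanifold of dimension $\ell$ inside the $(d-1)$-dimensional hyperplane $\Hd_d$ (and using the tubular chart, in which $dy' = d\sigma(y)\,ds$ at $s = 0$) produces
\[
\int_{\Hd_d}e^{-\Phi/\ep}F\, dy' \sim (2\pi\ep)^{(d-1-\ell)/2}\int_{G_0}\frac{F(y;\ep)}{\sqrt{\det D^2_{\perp, G_0}(d^j+d^k)(y)}}\, d\sigma(y),
\]
together with a complete asymptotic expansion in powers of $\ep^{1/2}$. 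Multiplying by the $\ep$-prefactors from the first paragraph and by $e^{-S_{jk}/\ep}$, and inserting the leading coefficient of $F$, yields the announced scaling $\ep^{-(N_j + N_k)}\ep^{(1-\ell)/2} e^{-S_{jk}/\ep}$ and the formula \eqref{0thm2} for $I_0$; a quick sanity check is that setting $\ell = 0$ recovers Theorem \ref{wjk-expansion}.

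The main obstacle, beyond the pseudodifferential machinery already developed, is the Laplace expansion along the submanifold $G_0$: one must verify that the surface measure $d\sigma$ appearing in \eqref{0thm2} is intrinsic (independent of the auxiliary metric used in the tubular construction), and push the argument past leading order to produce a full asymptotic series uniformly in $y \in G_0$. Compactness of $G_0$ (which follows since $G_0 \subset E \cap \Hd_d$ and $E$ is compact) together with the strict positivity of the transverse Hessian make this ultimately routine but bookkeeping-intensive; the preliminary reduction to the integral on $\Hd_d$ is essentially identical to the generic case and should not introduce new difficulties.
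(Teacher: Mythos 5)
Your high-level plan (Gaussian smoothing of the cut-off, WKB substitution, lattice-to-integral conversion, stationary phase with a critical \emph{manifold}) is in the right spirit and the normal-frame extension argument matches the paper's Step~2. However, there are two substantive problems.

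First, you claim to ``pass from the discrete sum to an integral exactly as in Theorem~\ref{wjk-expansion}'' and later that ``the preliminary reduction to the integral \dots should not introduce new difficulties.'' This reverses the actual difficulty. In the point case the paper uses a result (\cite{giacomo}, Cor.~C.2) tailored to a \emph{single} non-degenerate critical point of $\psi$; that argument does not apply when the phase degenerates along a manifold. In the manifold case the paper instead invokes Lemma~\ref{LemmaC1} (Poisson summation) and must \emph{verify its hypothesis}~\eqref{1LemC1}, i.e.\ uniform-in-$h$ integrability of all derivatives of $f_h = h^\ell e^{-\varphi_h} A_h$. This requires the estimates \eqref{8.1thm2}--\eqref{11a.1thm2}, which exploit the transverse non-degeneracy $\varphi(x)\geq C\dist(x,G_0)^2$ and the tubular chart; without them the derivatives of $\varphi_h$ grow like $|y-y_0|^2$ near the rescaled critical manifold and integrability is not automatic. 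The paper explicitly flags this step as ``considerably more involved'' and ``a main difficulty,'' and Step~1 of the proof is devoted to it. You identify the stationary phase along $G_0$ as the main obstacle, but that part is handled routinely by a Morse Lemma with parameter (Lemma~\ref{Morse}); the sum-to-integral conversion is where the real work is.

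Second, your reduction to a surface integral $\int_{\Hd_d} e^{-(d^j+d^k)/\ep} F\,dy'$ is not what the Gaussian-smoothing approach produces and is not justified. After smoothing one obtains a \emph{volume} integral over $\R^d$ with phase $\varphi(x)=d^j(x)+d^k(x)+C_0 x_d^2 - S_{jk}$ (see \eqref{1cor1}, \eqref{12.1thm2}); since $d^j+d^k$ itself depends on $x_d$ and its $x_d$-critical point is at $x_d=0$ only on $G_0$, the $x_d$-integration is not a separable Gaussian, and collapsing to $\Hd_d$ would need a further argument you do not supply. There is no discrete Green's formula turning the interaction into a Wronskian surface integral as in the Schr\"odinger case. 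Relatedly, your $\ep$-bookkeeping is off by $\ep^{1/2}$: a prefactor $\ep^{(1-d)/2-(N_j+N_k)}$ multiplied by $\ep^{(d-1-\ell)/2}$ from a $(d-1)$-dimensional stationary phase gives $\ep^{-\ell/2-(N_j+N_k)}$, not the stated $\ep^{(1-\ell)/2-(N_j+N_k)}$. The correct accounting, as in the paper, is a $d$-dimensional Laplace integral with $d-\ell$ transverse directions, giving $\ep^{(d-\ell)/2}$, which combined with the prefactor $\ep^{-d/2}\cdot\ep^{1/2-(N_j+N_k)}$ from Corollary~\ref{cor1} yields $\ep^{(1-\ell)/2-(N_j+N_k)}$ and, via \eqref{6.2thm2}, the factor $\sqrt{2C_0}$ that cancels against the stationary phase determinant to leave $\sqrt{\det D^2_{\perp,G_0}(d^j+d^k)}$.
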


We remark that - after appropriate complex deformations - an essential idea in the proof of Theorem \ref{wjk-expansion} and
Theorem \ref{wjk-expansion2} is to replace discrete sums by integrals up to a very small error and then apply stationary phase.
This replacement of a sum by an integral is considerably more involved in the case of Theorem \ref{wjk-expansion2} and
represents  a main difficulty in the proof. \\

Concerning the case of the Schr\"odinger operator, results analog to Theorem \ref{wjk-expansion2} certainly hold true, but to the
best of our knowledge are not published (for the somewhat related case of resonances, see \cite{hesjo3}).\\

The outline of the paper is as follows.

Section \ref{section2} consists of preliminary results needed for the proofs of both theorems. The proofs of Theorem \ref{wjk-expansion} 
and Theorem \ref{wjk-expansion2}, 
are then given in 
in Section \ref{section3} and Section \ref{section4} respectively. In Section \ref{section5} we give some additional results on the
interaction matrix. Appendix \ref{app1} consists of some results for the symbolic calculus of periodic symbols. In Appendix \ref{app2} we 
recall a basic result from \cite{kleinro4} about the tunneling where the interaction matrix $w_{jk}$ is defined.\\

{\sl Acknowledgements.} The authors thank B. Helffer for many valuable discussions and remarks on the
subject of this paper.

\section{Preliminary Results on the interaction term $w_{jk}$}\label{section2}

Throughout this section we assume that Hypotheses \ref{hyp1}, \ref{hyp2} and \ref{hypIMj} are fulfilled and 
the interaction term $w_{jk}$ is as defined in \eqref{wjkdef}. \\

Following \cite{hesjo2} and \cite{hepar}, we set for some $C_0>0$ 
\begin{equation}\label{phinull}
\phi_0(t) := iC_0 t^2\qquad\text{and}\qquad \phi_s(t) := \phi_0(t-s)\, , \qquad s,t\in \R
\end{equation}
and define the multiplication operator 
\begin{equation}\label{pis}
\pi_s(x) := \frac{\sqrt{C_0}}{\sqrt{\pi \ep}} e^{\frac{i}{\ep} \phi_s(x_d)} = \frac{\sqrt{C_0}}{\sqrt{\pi \ep}} e^{-\frac{C_0}{\ep} (x_d-s)^2}
\, , \quad x\in\R^d
\end{equation} 
where the factor is chosen such that $\int_\R \pi_s\, ds = 1$.

\begin{prop}\label{prop1}
\begin{equation}\label{0prop1} 
w_{jk} = \int_{-R}^0 \skpd{\bigl[T_\ep, \pi_s\bigr] \id_E v_j}{\id_E v_k} \, ds+ \expord{S_0 + a - \eta}\, , \qquad \eta > 0 \; . 
\end{equation}
\end{prop}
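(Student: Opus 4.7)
The plan is to identify the $s$-integral with a commutator of $T_\ep$ against a smoothed indicator, and then compare with the sharp-indicator formula of Remark~\ref{wjkalt}(1). First, I would apply Fubini--Tonelli: since the Gaussians $\pi_s$ are uniformly bounded by $C/\sqrt{\ep}$, since $\id_E v_j$ and $\id_E v_k$ have compact support on $E_\ep$, and since $\sum_\gamma|a_\gamma(x)|<\infty$ by~\eqref{agammasum}, all sums defining the scalar product are absolutely convergent. Interchanging $\int_{-R}^0 ds$ with $\skpd{\cdot}{\cdot}$ gives
\[
\int_{-R}^0 \skpd{[T_\ep,\pi_s]\id_E v_j}{\id_E v_k}\,ds \;=\; \skpd{[T_\ep,\tilde\chi]\id_E v_j}{\id_E v_k},
\]
where $\tilde\chi(x):=\int_{-R}^0\pi_s(x)\,ds$. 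Since Remark~\ref{wjkalt}(1) already provides $w_{jk}=\skpd{[T_\ep,\id_{\Hd_{d,R}}]\id_E v_j}{\id_E v_k}+\expord{S_0+a-\eta}$, setting $\rho:=\tilde\chi-\id_{\Hd_{d,R}}$ reduces the claim to
\[
D:=\skpd{[T_\ep,\rho]\id_E v_j}{\id_E v_k}=\expord{S_0+a-\eta}.
\]

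Next, $\tilde\chi$ is an elementary combination of error functions, and $\mathrm{erfc}(t)\leq e^{-t^2}$ for $t>0$ yields the uniform pointwise bound
\[
|\rho(x)|\leq C\bigl(e^{-C_0 x_d^2/\ep}+e^{-C_0(x_d+R)^2/\ep}\bigr),\qquad x\in\R^d.
\]
The second Gaussian is harmless: by~\eqref{EHR} and compactness of $E$ there is $\delta_0>0$ such that $x_d+R\geq\delta_0$ on $E$; choosing $C_0$ so large that $C_0\delta_0^2\geq S_0+a$ makes that contribution $\expord{S_0+a}$ a priori. So on $E$ only the first Gaussian is relevant, and its effective support is a $\sqrt{\ep}$--neighbourhood of $\Hd_d$ where $|\rho|=O(1)$.

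To bound $D$ I would then reproduce the algebraic identity underlying Remark~\ref{wjkalt}(1) (\cite{kleinro4}, Prop.~1.7). Using $[T_\ep,\rho]=[H_\ep,\rho]$ (since $V_\ep$ commutes with multiplication) together with $H_\ep v_m=\mu_m v_m+(\id-\id_{M_m})T_\ep v_m$ for $m=j,k$ (because $v_m$ is a Dirichlet eigenfunction of $H_\ep^{M_m}$), and first replacing $\id_E v_m$ by $v_m$ modulo an $\expord{S_0+a-\eta}$ Agmon tail (legitimate since $d^j+d^k>S_0+a$ off $E$), one obtains
\[
\skpd{[T_\ep,\rho]v_j}{v_k}=(\mu_k-\mu_j)\skpd{\rho v_j}{v_k}+\skpd{\rho v_j}{(\id-\id_{M_k})T_\ep v_k}-\skpd{\rho(\id-\id_{M_j})T_\ep v_j}{v_k}.
\]
The weighted Agmon bounds $\|e^{d^m/\ep}v_m\|_{\ell^2}=O(\ep^{-N})$ from \cite{kleinro2}, combined with the pointwise bound on $\rho$ and the estimate $d^j+d^k\geq S_{jk}\geq S_0$, give $|\skpd{\rho v_j}{v_k}|=O(\ep^{-N}e^{-S_{jk}/\ep})$; together with $|\mu_k-\mu_j|=\expord{a-\delta}$ from~\eqref{mualphamu} the first term is of size $\expord{S_{jk}+a-\delta}$, hence $\expord{S_0+a-\eta}$ for any $\eta>\delta$. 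The two boundary terms are handled using~\eqref{agammasum} and the fact that $(\id-\id_{M_m})T_\ep v_m$ effectively lives on $\partial M_m$, where $d^m\geq S$; the hypothesis $a<2S-S_0$ in Hypothesis~\ref{hypkj}(3) then yields the same bound $\expord{S_0+a-\eta}$.

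The main obstacle is the third step: the Gaussian width $\sqrt{\ep}$ of $\rho$ supplies no $e^{-a/\ep}$ decay on its own, so the required exponential gap $S_0+a-S_{jk}>0$ must be recovered purely via the above identity, the eigenvalue-gap hypothesis~\eqref{mualphamu}, and careful Agmon bookkeeping of the boundary contributions. In effect one reproves the machinery of Remark~\ref{wjkalt}(1) for the smoothed cutoff $\tilde\chi$, since $\tilde\chi$ is not a characteristic function and that proposition cannot be quoted directly.
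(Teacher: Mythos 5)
Your proof is correct in spirit but takes a genuinely different route from the paper, and the comparison is instructive. The paper starts from the $[T_\ep,\id_{M_k}]$-commutator (quoting \cite{kleinro4}, Prop. 4.2) rather than the $[T_\ep,\id_{\Hd_{d,R}}]$-commutator of Remark~\ref{wjkalt}(1), inserts $\int_\R\pi_s\,ds=1$ on the $v_j$ side, disposes of the tails $\int_{-\infty}^{-R}$ and $\int_0^\infty$ separately, and then moves $\pi_s$ into the commutator via symmetry of $T_\ep$, which generates five remainder terms $R_1,\dots,R_5$. You instead treat the $s$-integral at once as the smoothed cutoff $\tilde\chi=\int_{-R}^0\pi_s\,ds$, and reduce everything to estimating the single error $[T_\ep,\rho]$ with $\rho=\tilde\chi-\id_{\Hd_{d,R}}$; this is more conceptual and isolates the role of the Gaussian smoothing cleanly. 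Both routes then hinge on the same ingredients: the algebraic identity turning the commutator into $(\mu_k-\mu_j)$ times an inner product plus boundary terms along $\partial M_j,\partial M_k$, the eigenvalue-gap Hypothesis~\ref{hypkj}(3) (this enters in the paper as $R_2+R_4$ and in your argument as the $(\mu_k-\mu_j)\skpd{\rho v_j}{v_k}$ term), the weighted Agmon estimates from \cite{kleinro5}, and choosing $C_0$ large to kill contributions away from $\{x_d=0\}$ by Gaussian decay. One detail worth tightening: for the boundary terms the operative fact is not really $a<2S-S_0$ but the structural condition~\eqref{EHR}, which guarantees that $E\cap\{x_d\approx 0\}$ is bounded away from $\partial M_j$ and $\partial M_k$; that gives the positive constants $b_j,b_{\delta,k}$ that make the Gaussian factor $e^{-C_0x_d^2/\ep}$ exponentially small on $\partial M_m$, with $C_0$ chosen large afterwards. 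Also, your passage from $\skpd{[T_\ep,\rho]\id_E v_j}{\id_E v_k}$ to $\skpd{[T_\ep,\rho]v_j}{v_k}$ glosses over exactly the error terms that appear as $R_1,R_5$ in the paper's proof (the cutoff $\id_E$ does not commute with $T_\ep$), and those need the same Cauchy--Schwarz / triangle-inequality estimate in $\gamma$ that the paper writes out in \eqref{8prop1}--\eqref{10prop1}.
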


\begin{proof}
By \cite{kleinro4}, Proposition 4.2, we get by arguments similar to those given in the proof of \cite{kleinro4}, Theorem 1.7, for all $\eta>0$
\[
w_{jk} = \skpd{\id_E v_j}{ \bigl[T_\ep, \id_{M_k}\bigr] \id_E v_k} + \expord{S_0 + a - \eta} \; .\]
Using $\int_\R \pi_s\, ds = 1$ this yields
\begin{equation}\label{17prop1}
w_{jk}=\skpd{\int_{-R}^0 \pi_s\, ds \,\id_E v_j}{ \bigl[T_\ep, \id_{M_k}\bigr] \id_E v_k}  + A + B + \expord{S_0 + a - \eta}
\end{equation}
where
\begin{align*}
A &:= \skpd{\int_{-\infty}^{-R} \pi_s\, ds \,\id_E v_j}{ \bigl[T_\ep, \id_{M_k}\bigr] \id_E v_k}\quad\text{and} \\
B &:= \skpd{\int_0^\infty \pi_s\, ds\, \id_E v_j}{ \bigl[T_\ep, \id_{M_k}\bigr] \id_E v_k}  \; .
\end{align*}
By the assumptions on $E$ and $R$ in Hypothesis \ref{hypkj}, we have $A=0$. 
In order to show that $B=\expord{S_0 + a - \eta}$, we use \cite{kleinro4}, Lemma 5.1, telling us that for all $C>0$ and $\delta>0$
\begin{equation}\label{2prop1} 
\bigl[T_\ep, \id_{M_k}\bigr] = \id_{\delta M_k}\bigl[ T_\ep, \id_{M_k}\bigr] \id_{\delta M_k} + \expord{C}
\end{equation}
where, for any $A\subset \R^d$, we set
\begin{equation}\label{deltaA}
\delta A := \{ x\in \R^d\, |\, \dist (x, \partial A) \leq \delta\}\; .
\end{equation}
Setting
\begin{equation}\label{12prop1} 
b_{\delta,k}:= \min \{ |x_d|\, |\, x\in E\cap \delta M_k\}  \; ,
\end{equation}
we write
\begin{equation}\label{12aprop1}
 \int_0^\infty \pi_s \, ds \,\id_{E\cap \delta M_k}(x) = 
e^{-\frac{C_0}{\ep} b_{\delta,k}^2} \frac{\sqrt{C_0}}{\sqrt{\pi\ep}} \int_0^\infty \id_{E\cap \delta M_k}(x)
 e^{-\frac{C_0}{\ep}((x_d-s)^2 - b_{\delta,k}^2)} \, ds\; .
\end{equation}
Since $\Hd^c_{d,R}\cap E \subset \stackrel{\circ}{M}_k$ by Hypothesis \ref{hypkj}, it follows that, for $\delta>0$ sufficiently small, $x_d<0$ for 
$x\in E\cap \delta M_k$ and thus 
$|x_d - s| \geq |x_d| \geq b_{\delta,k}>0$ for $s\geq 0$.
Therefore the substitution $z^2 = \frac{C_0}{\ep} \bigl((x_d-s)^2 - b_{\delta,k}^2\bigr)$ on the right hand side of \eqref{12aprop1} yields 
$\frac{1}{\sqrt{\ep}}ds \leq 
\frac{z\sqrt{\ep}}{b_{\delta,k}}dz$ and thus by straightforward 
calculation for some $C_\delta>0$
\begin{equation}\label{1prop1}
\sup_x \Bigl| \int_0^\infty \pi_s \, ds \id_{E\cap \delta M_k}(x)\Bigr| \leq C_\delta \sqrt{\ep} e^{-\frac{C_0}{\ep}b_{\delta,k}^2}\; .
\end{equation}
Combining \eqref{2prop1} and \eqref{1prop1} and using $d^j(x) + d^k(x) \geq S_{jk}$ gives for all $\delta>0$
\begin{equation}\label{3prop1}
|B| \leq C_\delta \sqrt{\ep} e^{-\frac{C_0}{\ep}b_{\delta,k}^2} e^{-\frac{S_{jk}}{\ep}} \Bigl\|e^{\frac{d^j}{\ep}} v_j\Bigr\|_{\ell^2} 
\Bigl\| e^{\frac{d^k}{\ep}} [T_\ep, \id_{\delta M_k}\id_{M_k}]\id_{\delta M_k} v_k\Bigr\|_{\ell^2}\, .
\end{equation}
The definition of $T_\ep$ and $\id_{M_k}v_k = v_k$ yield $\bigl[T_\ep, \id_{M_k}\bigr]v_k(x) = \bigl(\id - \id_{M_k}\bigr)(x)\sum_\gamma a_\gamma (x;\ep) 
v_k(x+\gamma)$. 
The triangle inequality $d^k(x) \leq d(x, x+\gamma) + d^k(x+\gamma)$ 
and the Cauchy-Schwarz-inequality with respect to $\gamma$ therefore give
\begin{multline}\label{4prop1}
\Bigl\| e^{\frac{d^k}{\ep}} \id_{\delta M_k} [T_\ep, \id_{M_k}]\id_{\delta M_k} v_k\Bigr\|^2_{\ell^2} = 
\sum_{x\in\disk}\Bigl|\id_{\delta M_k}\bigl(\id - \id_{M_k}\bigr)(x) \sum_{\gamma\in\disk} a_\gamma(x;\ep) e^{\frac{d^k(x)}{\ep}} 
\bigl(\id_{\delta M_k} v_k\bigr)(x+\gamma)\Bigr|^2 \\
\leq \sum_{x\in M_{k,\ep}^c\cap \delta M_k} \Bigl( \sum_{\gamma \in\disk} \bigl| a_\gamma (x;\ep) e^{\frac{d(x, x+\gamma)}{\ep}}
\langle \gamma\rangle_\ep^{\frac{d+\eta}{2}}\bigr|^2 \Bigr)
\Bigl( \sum_{\gamma\in\disk} \bigl| e^{\frac{d^k(x+\gamma)}{\ep}} \bigl(\id_{\delta M_k} v_k\bigr)(x+\gamma) 
\langle \gamma\rangle_\ep^{-\frac{d+\eta}{2}}\bigr|^2 \Bigr)
\end{multline}
where we set $\langle \gamma\rangle_\ep := \sqrt{\ep^2 + |\gamma|^2}$.
By Hypothesis \ref{hypIMj}, for $\eta>0$ chosen consistently, the first factor on the right hand side of \eqref{4prop1} is 
bounded by some constant $C>0$ uniformly with respect to $x$.  
Changing the order of summation therefore yields
\begin{align}
\Bigl\| e^{\frac{d^k}{\ep}} \id_{\delta M_k}[T_\ep, \id_{M_k}]\id_{\delta M_k} v_k\Bigr\|^2_{\ell^2} &\leq 
C \sum_{\gamma\in \disk} \langle \gamma\rangle_\ep^{-(d+\eta)} 
\sum_{x\in M_{k,\ep}^c\cap \delta M_k}\bigl|e^{\frac{d^k(x+\gamma)}{\ep}}  \bigl(\id_{\delta M_k} v_k\bigr)(x+\gamma)\bigr|^2 \nonumber\\ 
&\leq \tilde{C} \Bigl\| e^{\frac{d^k}{\ep}}v_k\Bigr\|^2_{\ell^2}\label{5prop1}\; .
\end{align}
We now insert \eqref{5prop1} into \eqref{3prop1} and use that, by \cite{kleinro5}, Proposition 3.1, the 
Dirichlet eigenfunctions decay exponentially fast, i.e. there is a constant 
$N_0 \in \N$ such that
$\| e^{\frac{d^i}{\ep}}v_i\|_{\ell^2} \leq \ep^{-N_0}$  for $i=j, k$. 
This gives for any $\eta>0$
\[ |B| \leq C e^{-\frac{1}{\ep}(C_0 b_{\delta,k}^2 + S_{jk} - \eta)}\; . \]
Since $b_{\delta,k}>0$ we can choose $C_0$ such that $C_0 b_{\delta,k}^2 + S_{j,k} \geq S_0 + a$, showing that
$B = \expord{S_0 + a - \eta}$ for $C_0$ sufficiently large and therefore by \eqref{17prop1}
\begin{equation}\label{6prop1}
w_{jk} = \skpd{\int_{-R}^0 \pi_s\, ds \,\id_E v_j}{ \bigl[T_\ep, \id_{M_k}\bigr] \id_E v_k} + \expord{S_0 + a - \eta}\; .
\end{equation}
In order to get the stated result, we use the symmetry of $T_\ep$ to write
\begin{align}
&\skpd{\int_{-R}^0 \pi_s\, ds \,\id_E v_j}{ \bigl[T_\ep, \id_{M_k}\bigr] \id_E v_k}  \nonumber\\
&\qquad=\skpd{T_\ep \int_{-R}^0 \pi_s\, ds\, \id_E v_j}{\id_E v_k} - \skpd{\int_{-R}^0 \pi_s\, ds\, \id_E v_j}{ \id_{M_k} T_\ep \id_E v_k}\nonumber \\
&\qquad= \skpd{\bigl[ T_\ep, \int_{-R}^0 \pi_s\, ds\bigr] \id_E v_j}{ \id_E v_k} + \sum_{i=1}^5 R_i\label{7prop1}
\end{align}
where by commuting $T_\ep$ with $\id_E$ and inserting $\id_{M_j} + \id_{M_j^c}$ in $R_2$ and $R_3$
\begin{align*}
R_1 &:= \skpd{\int_{-R}^0 \pi_s\, ds \bigl[T_\ep, \id_E\bigr] v_j}{\id_E v_k}  \\
R_2 &:= \skpd{\int_{-R}^0 \pi_s\, ds \,\id_E\id_{M_j} T_\ep v_j}{ \id_E v_k} \\
R_3 &:= \skpd{\int_{-R}^0 \pi_s\, ds \,\id_E\id_{M_j^c} T_\ep v_j}{\id_E v_k}  \\
R_4 &:= -\skpd{\int_{-R}^0 \pi_s\, ds\, \id_E v_j}{ \id_E\id_{M_k} T_\ep v_k} \\
R_5 &:= -\skpd{\int_{-R}^0 \pi_s\, ds \,\id_E v_j}{ \id_{M_k}\bigl[T_\ep, \id_E\bigr] v_k}\; .
\end{align*}

We are now going to prove that $\bigl|\sum_i R_i\bigr| = \expord{S_0 + a - \eta}$ for all $\eta>0$.

Since $\id_E(x) \bigl(\id_E(x+\gamma) -\id_E(x)\bigr)$ is equal to $-1$ for $x\in E, x+\gamma \in E^c$ and zero otherwise, we have
\begin{align} 
\bigl|R_1\bigr| &= \Bigl| \sum_{x,\gamma\in \disk} \int_{-R}^0 \pi_s\, ds \,  v_k(x) a_\gamma(x; \ep) v_j(x+\gamma) \id_E (x)
\bigl( \id_E(x+\gamma) - \id_E (x)\bigr)\Bigr| \nonumber\\
&=  \Bigl| \sum_{x,\gamma\in \disk} \int_{-R}^0 \pi_s\, ds \,  v_k(x) a_\gamma(x; \ep) v_j(x+\gamma) \id_E (x) \id_{E^c}(x+\gamma)\Bigr|\; . \label{8prop1}
\end{align}
Using for the first step that $d^j(x+\gamma) + d^k(x+\gamma) \geq S_0 + a$ for $x+\gamma\in E^c$ and for the second 
step the triangle inequality for $d$, we get
\begin{align} 
\text{rhs} \eqref{8prop1} &\leq e^{-\frac{S_0 + a}{\ep}} \sum_{x,\gamma\in\disk} \Bigl| 
 \int_{-R}^0 \pi_s\, ds \,  e^{\frac{d^k(x+\gamma)}{\ep}}v_k(x) a_\gamma(x; \ep) e^{\frac{d^j(x+\gamma)}{\ep}}v_j(x+\gamma) \id_E (x) 
\id_{E^c}(x+\gamma)\Bigr|\nonumber \\
&\leq  e^{-\frac{S_0 + a}{\ep}} \sum_{x\in\disk} \Bigl| 
 \Bigl( \int_{-R}^0 \pi_s\, ds \,  e^{\frac{d^k}{\ep}}\id_E v_k\Bigr)(x)\Bigr|\sum_{\gamma\in \disk} \Bigl| a_\gamma(x; \ep) 
e^{\frac{d(x, x+\gamma)}{\ep}}
\Bigl( e^{\frac{d^j}{\ep}}\id_{E^c} v_j\Bigr) (x+\gamma)\Bigr| \nonumber\\
&\leq  e^{-\frac{S_0 + a}{\ep}} \Bigl\| e^{\frac{d^k}{\ep}}v_k\Bigr\|_{\ell^2} \Biggl(\sum_{x\in \disk}\Bigl| \sum_{\gamma\in \disk}  a_\gamma(x; \ep)
 e^{\frac{d(x, x+\gamma)}{\ep}}\Bigl( e^{\frac{d^j}{\ep}} v_j\Bigr) (x+\gamma)\Bigr|^2 \Biggr)^{1/2} \label{9prop1}
\end{align}
where in the last step we used the Cauchy-Schwarz-inequality with respect to $x$ and $\int_\R \pi_s\, ds = 1$. 
By Cauchy-Schwarz-inequality with respect to $\gamma$ analog to \eqref{4prop1} and \eqref{5prop1} we get 
\begin{align}
 &\sum_{x\in \disk}\Bigl| \sum_{\gamma\in \disk}  a_\gamma(x; \ep)
 e^{\frac{d(x, x+\gamma)}{\ep}}\Bigl( e^{\frac{d^j}{\ep}} v_j\Bigr) (x+\gamma)\Bigr|^2  \nonumber\\
&\quad = \sum_{x\in \disk} \Bigl( \sum_{\gamma \in\disk} \bigl| a_\gamma (x;\ep) e^{\frac{d(x, x+\gamma)}{\ep}}
\langle \gamma\rangle_\ep^{(d+\eta)/2}\bigr|^2 \Bigr)
\Bigl( \sum_{\gamma\in\disk} \bigl| e^{\frac{d^j(x+\gamma)}{\ep}} v_k\bigr)(x+\gamma) \langle \gamma\rangle_\ep^{-(d+\eta)/2}\bigr|^2 \Bigr)\nonumber\\
&\quad \leq C \Bigl\| e^{\frac{d^j}{\ep}}v_j\Bigr\|^2_{\ell^2}\label{18prop1}\; .
\end{align}
Inserting \eqref{18prop1} into \eqref{9prop1} gives by \eqref{8prop1} together with \cite{kleinro5}, Proposition 3.1, for any $\eta>0$
\begin{equation}\label{10prop1}
\bigl|R_1\bigr| \leq C  e^{-\frac{S_0 + a}{\ep}} \Bigl\| e^{\frac{d^k}{\ep}}v_k\Bigr\|_{\ell^2}  \Bigl\| e^{\frac{d^j}{\ep}}v_j\Bigr\|_{\ell^2}
\leq C  e^{-\frac{S_0 + a-\eta}{\ep}}\; .
\end{equation}
Analog arguments show 
\begin{equation}\label{16prop1}
|R_5| = \expord{S_0 + a - \eta}\; .
\end{equation}
We analyze $|R_2 + R_4|$ together, writing
\[
\bigl| R_2 + R_4 \bigr| \leq \sum_{x\in\disk} \int_{-R}^0 \pi_s\, ds \id_E(x) \Bigl| v_k(x) \bigl(\id_{M_j}T_\ep v_j\bigr)(x) - v_j(x) \bigl( \id_{M_k} 
T_\ep v_k\bigr) (x)\Bigr| \; .
\]
Now using that 
\[ v_k \id_{M_j}T_\ep v_j - v_j \id_{M_k}T_\ep v_k + V_\ep v_j v_k - V_\ep v_j v_k = v_k H_\ep^{M_j} v_j - v_j H_\ep^{M_k} v_k =
 (\mu_j - \mu_k) v_j v_k  \]
we get by Hypothesis \ref{hypkj}, Cauchy-Schwarz-inequality and since $d^j(x) + d^k(x) \geq S_{jk}$
\begin{align}
\bigl| R_2 + R_4 \bigr| &\leq |\mu_j - \mu_k| e^{-\frac{S_{jk}}{\ep}} \sum_{x\in\disk}  \int_{-R}^0 \pi_s\, ds \id_E(x) \Bigl| e^{\frac{d^j(x)}{\ep}} v_j(x) 
e^{\frac{d^k(x)}{\ep}} v_k(x)\Bigr|\nonumber\\
&\leq e^{-\frac{S_{jk} + a - \delta}{\ep}} \Bigl\| e^{\frac{d^j}{\ep}}v_j\Bigr\|_{\ell^2} \Bigl\| e^{\frac{d^k}{\ep}}v_k\Bigr\|_{\ell^2}\nonumber \\
&\leq C e^{-\frac{S_0 + a - \eta}{\ep}}\label{11prop1}
\end{align}
where in the last step we used again \cite{kleinro5}, Proposition 3.1, and $S_{jk}\geq S_0$.

The term $|R_3|$ can be estimated by methods similar to those used to estimate $|B|$ above. By Hypothesis \ref{hypkj} we have 
$E\cap \Hd_{d,R} \subset \stackrel{\circ}{M}_j$. Thus $x_d>0$ for $x\in E\cap M_j^c$ and, setting 
$b_j:= \min \{|x_d|\,|\, x\in E\cap M_j^c\}$, we have $|x_d - s| \geq |x_d|\geq b_j >0$ for $s\leq 0$. 
Thus we get analog to \eqref{12aprop1} and \eqref{1prop1}
\begin{equation}\label{13prop1}
\sup_x \Bigl| \int_{-R}^0 \pi_s \, ds \id_{E\cap M^c_j}(x)\Bigr| \leq C \sqrt{\ep} e^{-\frac{C_0}{\ep}b_j^2}
\end{equation}
and similar to \eqref{3prop1}, using Cauchy-Schwarz-inequality, 
\begin{equation}\label{14prop1}
\bigl| R_3\bigr| \leq  C \sqrt{\ep} e^{-\frac{1}{\ep}(C_0 b_j^2 + S_{jk})}\Bigl\| e^{\frac{d^k}{\ep}} v_k\Bigr\|_{\ell^2}
\Bigl\| e^{\frac{d^j}{\ep}} T_\ep v_j\Bigr\|_{\ell^2}
\end{equation} 
As in \eqref{4prop1} and\eqref{5prop1}, we estimate the last factor in \eqref{14prop1} as
\begin{multline*}
\bigl\| e^{\frac{d^j}{\ep}} T_\ep v_j\bigr\|^2_{\ell^2} = 
\sum_{x\in\disk}\Bigl|\sum_{\gamma\in\disk} a_\gamma(x;\ep) e^{\frac{d^j(x)}{\ep}} v_j\bigr)(x+\gamma)\Bigr|^2 \\
\leq \sum_{x\in \disk} \Bigl( \sum_{\gamma \in\disk} \bigl| a_\gamma (x;\ep) e^{\frac{d(x, x+\gamma)}{\ep}}
\langle \gamma\rangle_\ep^{\frac{d+\eta}{2}}\bigr|^2 \Bigr)
\Bigl( \sum_{\gamma\in\disk} \bigl| e^{\frac{d^j(x+\gamma)}{\ep}} v_j(x+\gamma) \langle \gamma\rangle_\ep^{-\frac{d+\eta}{2}}\bigr|^2 \Bigr)\\
\leq 
C \sum_{\gamma\in \disk} \langle \gamma\rangle_\ep^{-(d+\eta)} 
\sum_{x\in \disk}\bigl|e^{\frac{d^j(x+\gamma)}{\ep}}  v_j(x+\gamma)\bigr|^2 
\leq \tilde{C} \Bigl\| e^{\frac{d^j}{\ep}}v_j\Bigr\|^2_{\ell^2}\; .
\end{multline*}

Thus choosing $C_0$ such that
$C_0 b_j^2 + S_{jk} \geq S_0 + a$, we get again by \cite{kleinro5}, Proposition 3.1, for any $\eta>0$
\begin{equation}\label{15prop1}
\bigl| R_3\bigr| \leq  C e^{-\frac{1}{\ep}(C_0 b_j^2 + S_{jk})}\Bigl\| e^{\frac{d^k}{\ep}} v_k\Bigr\|_{\ell^2}
\Bigl\| e^{\frac{d^j}{\ep}} v_j\Bigr\|_{\ell^2} \leq C e^{-\frac{1}{\ep}(S_0 + a - \eta)}\; .
\end{equation}
Inserting \eqref{15prop1}, \eqref{11prop1}, \eqref{16prop1} and \eqref{10prop1} into \eqref{7prop1} yields \eqref{0prop1} 
by \eqref{6prop1} and interchanging of integration and summation. 

\end{proof}

In the next step we analyze the commutator in \eqref{0prop1} using symbolic calculus.

\begin{prop} \label{prop2}
For any $u\in \ell^2(\disk)$ compactly supported and $x\in \disk$ we have with the notation $\xi = (\xi', \xi_d)\in \T^d$
\begin{multline}\label{0prop2}
\bigl[ T_\ep, \pi_s\bigr] u(x) = \frac{\sqrt{C_0}}{\sqrt{\pi\ep}} (2\pi)^{-d} \sum_{y\in\disk} e^{\frac{i}{2\ep}( \phi_s(y_d) + \phi_s(x_d))} u(y)\\
\times 
\;\int_{[-\pi, \pi]^d} e^{\frac{i}{\ep}(y-x)\xi} \Bigl( t\bigl(x,  \xi', \xi_d - \frac{1}{2}\phi_s'(\frac{x_d + y_d}{2}); \ep \bigr) - 
t\bigl(x,  \xi', \xi_d + \frac{1}{2}\phi_s'(\frac{x_d + y_d}{2}); \ep\bigr) \Bigr) \, d\xi 
\end{multline}
where $\phi_s' (t) = \frac{d}{dt}\phi_s(t)= 2 i C_0 (t-s)$ and $T_\ep = \Op_\ep^\T (t)$ as given in \eqref{psdo2dTorus}.
\end{prop}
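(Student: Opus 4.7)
The starting point is the torus quantization formula for $T_\ep = \Op_\ep^\T(t)$ from Appendix \ref{app1}. Applied to $T_\ep(\pi_s u)(x)$ and $\pi_s(x) T_\ep u(x)$ for compactly supported $u$, and subtracting, one finds
\[
[T_\ep,\pi_s]u(x) = (2\pi)^{-d}\sum_{y\in\disk}\bigl(\pi_s(y)-\pi_s(x)\bigr) u(y) \int_{[-\pi,\pi]^d} e^{\frac{i}{\ep}(y-x)\xi}\, t(x,\xi;\ep)\, d\xi .
\]
Compact support of $u$ collapses the $y$-sum to finitely many terms, so all rearrangements below (factoring exponentials, deforming contours, interchanging sum and integral) are trivially legitimate.

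The second step exploits the fact that $\phi_s(t)=iC_0(t-s)^2$ is \emph{quadratic}, so the mean value identity
\[
\phi_s(y_d)-\phi_s(x_d) = \phi_s'\!\Bigl(\frac{x_d+y_d}{2}\Bigr)\,(y_d-x_d)
\]
holds exactly, not merely to leading order. Writing $m:=(x_d+y_d)/2$, this allows the symmetric factoring
\[
\pi_s(y)-\pi_s(x) = \frac{\sqrt{C_0}}{\sqrt{\pi\ep}}\, e^{\frac{i}{2\ep}(\phi_s(y_d)+\phi_s(x_d))}\Bigl(e^{\frac{i}{2\ep}\phi_s'(m)(y_d-x_d)} - e^{-\frac{i}{2\ep}\phi_s'(m)(y_d-x_d)}\Bigr),
\]
which already pulls out the Gaussian prefactor visible on the right of \eqref{0prop2}.

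The crux, and the only substantive step, is the third: the remaining exponentials $e^{\pm\frac{i}{2\ep}\phi_s'(m)(y_d-x_d)}$ can be absorbed into the oscillatory factor $e^{\frac{i}{\ep}(y-x)\xi}$ by the substitution $\xi_d \mapsto \xi_d \mp \frac{1}{2}\phi_s'(m)$ inside the symbol $t$. Since $\phi_s'(m) = 2iC_0(m-s)$ is purely imaginary, this is a purely vertical shift of the $\xi_d$-contour in $\C$. It is justified by Cauchy's theorem: the remark following \eqref{talsexp} (via \eqref{agammasum}) shows that $\xi \mapsto t(x,\xi;\ep)$ extends to an entire function on $\C^d$, so no pole is crossed, while $2\pi$-periodicity in $\xi_d$ forces the two side contributions at $\Re\xi_d = \pm\pi$ to cancel. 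After the deformation, the term coming from $e^{+\frac{i}{2\ep}\phi_s'(m)(y_d-x_d)}$ contributes $t\bigl(x,\xi',\xi_d-\frac{1}{2}\phi_s'(m);\ep\bigr)$ and the one with opposite sign contributes $t\bigl(x,\xi',\xi_d+\frac{1}{2}\phi_s'(m);\ep\bigr)$. Assembling the signs yields exactly \eqref{0prop2}.

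The main (and essentially the only) obstacle is the justification of the contour shift in the third step; everything else is algebraic. The geometry is set up precisely so that this shift is purely imaginary, keeping us in the domain of analyticity of $t$, while periodicity takes care of the boundary.
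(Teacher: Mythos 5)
Your proposal is correct and follows essentially the same route as the paper: both rely on the exact quadratic identity $\phi_s(y_d)-\phi_s(x_d)=\phi_s'\bigl(\tfrac{x_d+y_d}{2}\bigr)(y_d-x_d)$ and a vertical contour shift in $\xi_d$ justified by the analyticity and $2\pi$-periodicity of $t$ (the paper isolates this last step as Lemma \ref{Lem1}). The only difference is cosmetic bookkeeping — you form the commutator as $\pi_s(y)-\pi_s(x)$ and factor before shifting, whereas the paper shifts the contour in $T_\ep\pi_s u$ and $\pi_s T_\ep u$ separately and subtracts at the end.
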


\begin{proof}
By Definition \ref{pseudo},(4), we have
\begin{align}\label{1prop2}
\bigl( T_\ep \pi_s u\bigr) (x) &=  \frac{\sqrt{C_0}}{\sqrt{\pi\ep}} (2\pi)^{-d} \sum_{y\in\disk} u(y)
\int_{[-\pi, \pi]^d} e^{\frac{i}{\ep}((y-x)\xi + \phi_s(y_d))} t(x,  \xi; \ep) \, d\xi \\
 \bigl(\pi_s T_\ep u\bigr) (x) &=  \frac{\sqrt{C_0}}{\sqrt{\pi\ep}} (2\pi)^{-d} \sum_{y\in\disk} u(y)
\int_{[-\pi, \pi]^d} e^{\frac{i}{\ep}((y-x)\xi + \phi_s(x_d))} t(x,  \xi; \ep) \, d\xi \label{2prop2}
\end{align}
Setting
\begin{equation}\label{3prop2}
\xi_{\pm} := \Bigl( \xi', \xi_d \pm \frac{1}{2} \phi_s'\bigl(\frac{x_d + y_d}{2}\bigr)\Bigr)
\end{equation}
we have
\begin{align}\label{4prop2}
(y-x)\xi + \phi_s(y_d) &= (y-x)\xi_+ + \frac{1}{2} \bigl( \phi_s(y_d) + \phi_s(x_d)\bigr) \\ 
(y-x)\xi + \phi_s(x_d) &= (y-x)\xi_- + \frac{1}{2} \bigl( \phi_s(y_d) + \phi_s(x_d)\bigr) \nonumber
\end{align}
In fact,
\begin{equation}\label{5prop2}
(y-x)\xi_{\pm} + \frac{1}{2}\bigl( \phi_s(y_d) + \phi_s(y_d)\bigr) = (y-x)\xi \pm (y_d - x_d) iC_0 
\Bigl( \frac{x_d + y_d}{2} - s\Bigr) + 
\frac{C_0 i}{2} \Bigl( (y_d - s)^2 + (x_d-s)^2\Bigr) \; .
\end{equation}
Writing $y_d - x_d = (y_d - s) - (x_d - s)$ and $\frac{x_d + y_d}{2} -s = \frac{1}{2} \bigl((x_d - s) + (y_d-s)\bigr)$ gives 
\begin{align*}
\text{rhs}\eqref{5prop2} &= (y-x)\xi \pm \frac{iC_0}{2} \Bigl( (y_d - s)^2 - (x_d-s)^2\Bigr) + \frac{iC_0}{2} 
\Bigl( (y_d-s)^2 + (x_d-s)^2\Bigr) \\
&= \begin{cases} (y-x)\xi + \phi_s(y_d)\; \text{ for }\; + \\ (y-x)\xi + \phi_s(x_d)\; \text{ for } \; - \end{cases}\; .
\end{align*}

Since, with respect to $\xi$, $t$ has an analytic continuation to $\C^d$, it is possible to combine the 
integrals in \eqref{1prop2} and \eqref{2prop2}
using the contour deformation given by the substitution \eqref{3prop2}.
To this end, we first need the following Lemma

\begin{Lem}\label{Lem1}
Let $f: \C\ra \C$ be analytic in $\Omega_b:= \{ z\in \C\,|\, \Im z < b\}$ for some $b>0$ and 
$2\pi$-periodic on the real axis, i.e. $f(x+2\pi) = f(x)$ for all $x\in \R$. 
Then for any $a<b$
\[ \int_{-\pi + ia}^{\pi + ia} f(z)\, dz = \int_{-\pi}^\pi f(x)\, dx\; . \]
\end{Lem}

\begin{proof}[Proof of Lemma \ref{Lem1}]
If $f$ is periodic on the real line, if follows that $f(z) = f(z+2\pi)$ for $z\in \O_b$ by the identity theorem. Then Cauchy's Theorem yields
\begin{equation}\label{1Lem1}
 \int_{-\pi + ia}^{\pi + ia} f(z)\, dz - \int_{-\pi }^{\pi } f(z)\, dz = \int_{-\pi + ia}^{-\pi} f(z)\, dz + \int_{\pi }^{\pi + ia} f(z)\, dz\; .
\end{equation}
The substitution $\tilde{z}= z - 2\pi$ in the last integral on the right hand side of \eqref{1Lem1} gives by the periodicity of $f$
\begin{align*}
\text{rhs}\eqref{1Lem1} &=  \int_{-\pi + ia}^{-\pi} f(z)\, dz + \int_{-\pi }^{-\pi + ia} f(\tilde{z} + 2\pi)\, d\tilde{z}\\
&=   \int_{-\pi + ia}^{-\pi} f(z)\, dz + \int_{-\pi }^{-\pi + ia} f(\tilde{z})\, d\tilde{z} 
= 0 \; ,
\end{align*}
proving the stated result.
\end{proof}

We come back to the proof of Proposition \ref{prop2}. For shortening the notation we set
\begin{equation}\label{6prop2}
 a:= \frac{1}{2} \phi_s'\bigl(\frac{x_d + y_d}{2}\bigr) = C_0 \Bigl( \frac{x_d + y_d}{2} - s\Bigr) \; .
\end{equation}
Inserting the substitution \eqref{3prop2} in \eqref{1prop2}, we get by \eqref{4prop2} and \eqref{6prop2}
\begin{align}
\bigl( T_\ep \pi_s u\bigr) (x) &=  \frac{\sqrt{C_0}}{\sqrt{\pi\ep}} (2\pi)^{-d} \sum_{y\in\disk} u(y)
\int_{[-\pi, \pi]^d} e^{\frac{i}{\ep}\bigl((y-x)\xi_+ + \frac{1}{2}(\phi_s(y_d) + \phi_s(x_d))\bigr)} t (x,  \xi; \ep) \, d\xi \nonumber\\
&=  \frac{\sqrt{C_0}}{\sqrt{\pi\ep}} (2\pi)^{-d} \sum_{y\in\disk} u(y)\int_{[-\pi, \pi]^{d-1}}\, d\xi'_+
  \nonumber \\
&   \hspace{5mm} \times\; \int_{-\pi-ia}^{\pi + i a} \, d(\xi_+)_d e^{\frac{i}{\ep}\bigl((y-x)\xi_+ + \frac{1}{2}(\phi_s(y_d) + \phi_s(x_d))\bigr)} 
t (x,  \xi_+', (\xi_+)_d - ia; \ep)\nonumber  \\
& =  \frac{\sqrt{C_0}}{\sqrt{\pi\ep}} (2\pi)^{-d} \sum_{y\in\disk} u(y)
\int_{[-\pi, \pi]^{d}} e^{\frac{i}{\ep}\bigl((y-x)\xi + \frac{1}{2}(\phi_s(y_d) + \phi_s(x_d))\bigr)} 
t (x,  \xi', \xi_d - ia; \ep) \, d\xi     \label{7prop2}
\end{align}
where in the last step we used Lemma \ref{Lem1}.

By analog arguments for \eqref{2prop2} we get
\begin{equation}\label{8prop2}
 \bigl(\pi_s T_\ep u\bigr) (x) =  \frac{\sqrt{C_0}}{\sqrt{\pi\ep}} (2\pi)^{-d} \sum_{y\in\disk} u(y)
\int_{[-\pi, \pi]^d} e^{\frac{i}{\ep}\bigl((y-x)\xi + \frac{1}{2}(\phi_s(y_d) + \phi_s(x_d))\bigr)} 
t (x,  \xi', \xi_d + ia; \ep) \, d\xi  
\end{equation}
and thus combining \eqref{7prop2} and \eqref{8prop2} gives \eqref{0prop2}.

\end{proof}

The idea is now to write the $s$-dependent terms in \eqref{0prop2} as 
$s$-derivative of some symbol.
To this end, we first introduce some smooth cut-off functions on the right hand side of \eqref{0prop1}. \\

Let $\chi_R\in \Ce_0^\infty (\R)$ be such that $\chi_R (s)=1$ for $s\in [-R, R]$ and $\chi_E\in \Ce_0^\infty (\R^d)$ such that
$\chi_E(x) = 1$ for $x\in E$.  Moreover we assume that $\chi_R(s) = \chi_R(-s)$ and $\chi_E(x) = \chi_E(-x)$. 
Then it follows directly from Proposition \ref{prop1} that
\begin{equation}\label{0prop1a}
 w_{jk} = \int_{-R}^0 \skpd{\chi_R(s) \bigl[T_\ep, \pi_s\bigr] \chi_E \id_E v_j}{\chi_E\id_E v_k}\, ds + 
\expord{S_0 + a - \eta}\, , \qquad \eta>0 \; . 
\end{equation}

\begin{prop}\label{prop3}
There are compactly supported smooth mappings 
\[ \R \ni s\mapsto q_s \in S_0^0(1)(\R^{2d}\times \T^d)\quad\text{and}\quad \R \ni s\mapsto r_s \in S_0^\infty(1)(\R^{2d}\times \T^d)\]
such that $q_s(x, y, \xi; \ep)$ and $r_s(x, y, \xi; \ep)$ 
have analytic continuations to $\C^d$ with respect to $\xi\in\R^d$ (identifying functions on $\T^d$ with periodic functions on $\R^d$).
Moreover, $q_s$ has an asymptotic expansion 
\begin{equation}\label{00prop3}
q_s(x, y, \xi; \ep) \sim \sum_{n=0}^\infty \ep^n q_{n,s}(x, y, \xi)\; .
\end{equation} 
and, setting $\sigma:= \frac{x_d + y_d}{2} - s$,
\begin{multline}\label{0prop3}
 \chi_R(s) \chi_E(x)\chi_E(y) e^{-\frac{C_0}{\ep}\sigma^2}\Bigl[ t \bigl(x, \xi', \xi_d - i C_0 
\sigma; \ep\bigr) -  t \bigl(x, \xi', \xi_d + i C_0\sigma; \ep\bigr)\Bigr]  \\
= \ep \partial_s \Bigl[  e^{-\frac{C_0}{\ep}\sigma^2} q_s(x, y, \xi; \ep)\Bigr]
 +  e^{-\frac{C_0}{\ep}\sigma^2} r_s (x, y, \xi; \ep)\; .
\end{multline}
\end{prop}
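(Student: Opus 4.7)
The plan is to extract an $s$-derivative structure from the left-hand side of \eqref{0prop3} by Taylor-expanding the difference in $\xi_d$ (using the analyticity of $t$) and then absorbing each resulting odd power of $\sigma$ into an $s$-derivative of the Gaussian $e^{-C_0\sigma^2/\ep}$. Since $\xi\mapsto t(x,\xi;\ep)$ extends analytically to $\C^d$ (Hypothesis \ref{hyp1}(a)(iv), see also \eqref{agammasum}), the symmetric imaginary displacement cancels even powers and yields
\begin{equation*}
t(x,\xi',\xi_d - iC_0\sigma;\ep) - t(x,\xi',\xi_d + iC_0\sigma;\ep) = -2i\sum_{m=1}^\infty \frac{(-1)^{m-1}C_0^{2m-1}}{(2m-1)!}\,\sigma^{2m-1}\,\partial_{\xi_d}^{2m-1}t(x,\xi;\ep),
\end{equation*}
which converges absolutely on the compact support of the cutoffs $\chi_R,\chi_E$ by the Cauchy estimates $|\partial_{\xi_d}^{2m-1}t|\leq C\,(2m-1)!\,r^{-(2m-1)}$ in the strip of $\xi_d$-analyticity.

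Using $\partial_s\sigma = -1$ and $\partial_s e^{-C_0\sigma^2/\ep} = (2C_0\sigma/\ep)\,e^{-C_0\sigma^2/\ep}$, I compute
\begin{equation*}
\partial_s\bigl[\sigma^{2m-2}\,e^{-C_0\sigma^2/\ep}\bigr] = \tfrac{2C_0}{\ep}\,\sigma^{2m-1}\,e^{-C_0\sigma^2/\ep} - (2m-2)\,\sigma^{2m-3}\,e^{-C_0\sigma^2/\ep},
\end{equation*}
which by induction produces polynomials $P_m(\sigma;\ep) = \sum_{k=0}^{m-1} c_{m,k}\,\ep^k\,\sigma^{2(m-1-k)}$ satisfying
\begin{equation*}
\sigma^{2m-1}\,e^{-C_0\sigma^2/\ep} = \ep\,\partial_s\bigl[P_m(\sigma;\ep)\,e^{-C_0\sigma^2/\ep}\bigr],\qquad m\geq 1.
\end{equation*}
Combining gives the formal identity
$e^{-C_0\sigma^2/\ep}\bigl[t(x,\xi',\xi_d-iC_0\sigma;\ep)-t(x,\xi',\xi_d+iC_0\sigma;\ep)\bigr] = \ep\,\partial_s[e^{-C_0\sigma^2/\ep}\,\widetilde q_s]$ with
\begin{equation*}
\widetilde q_s(x,y,\xi;\ep) = -2i\sum_{m=1}^\infty \frac{(-1)^{m-1}C_0^{2m-1}}{(2m-1)!}\,\partial_{\xi_d}^{2m-1}t(x,\xi;\ep)\,P_m(\sigma;\ep),
\end{equation*}
and reorganizing by powers of $\ep$ produces candidate asymptotic coefficients $q_{n,s}(x,y,\xi)$, each a convergent sum (by the Cauchy estimate again, balanced against the $(2m-1)!$) that is smooth and $\xi$-analytic.

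Finally, I apply Borel's lemma in the variable $\ep$ to realize a genuine symbol $q_s\in S^0_0(1)$ with $q_s\sim \sum_n \ep^n q_{n,s}$, multiplied by $\chi_R(s)\chi_E(x)\chi_E(y)$ to ensure compact support in $(s,x,y)$; $\xi$-analyticity is inherited because Borel summation acts only on the $\ep$-variable and each $q_{n,s}$ is $\xi$-analytic. The remainder $r_s\in S_0^\infty(1)$ absorbs three contributions: the $O(\ep^\infty)$ Borel truncation error, the Leibniz terms in which $\partial_s$ falls on $\chi_R(s)$ (compactly supported near $s=\pm R$, and harmless in the $\int_{-R}^{0}ds$ of \eqref{0prop1a}), and the Taylor tail beyond any fixed order, which is again $O(\ep^\infty)$ on the cutoff support. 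The main technical obstacle is the simultaneous preservation of $\xi$-analyticity, the asymptotic expansion in $\ep$, and compact support in $(s,x,y)$: this works because Borel's lemma is transparent in the non-$\ep$ variables, so $\xi$-analyticity of each $q_{n,s}$ transfers to $q_s$, while the cutoffs supply compact support at the cost of the harmless remainder $r_s$.
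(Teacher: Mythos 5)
Your proposal is correct and follows essentially the same route as the paper's own proof. The core strategy is identical: exploit the oddness of the left-hand side in $\sigma = \frac{x_d+y_d}{2}-s$, absorb odd powers of $\sigma$ into $\ep\partial_s$ of the Gaussian, reorganize into a formal power series in $\ep$ with $\xi$-analytic coefficients bounded at $\sigma=0$, and then Borel-sum in $\ep$ to produce the genuine symbol $q_s$ with remainder $r_s\in S^\infty$. The only cosmetic difference is bookkeeping: the paper divides the difference by $2C_0\sigma$ to get an even analytic function $g_s$ (expanding $\sinh(\gamma_d C_0\sigma/\ep)$ term by term) and then solves $(1+\tfrac{\ep}{2C_0\sigma}\partial_s)q_s=g_s$ by a formal Neumann series, whereas you Taylor-expand $t$ in $\xi_d$ and build the $s$-primitives $P_m$ inductively; these yield the same $q_{n,s}$ after regrouping. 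One small imprecision to tidy up: since the Taylor series in $\xi_d$ converges (by \eqref{agammasum} the symbol is entire in $\xi$ with uniform bounds on strips), there is no separate ``Taylor tail'' contribution to $r_s$; the only genuine sources of $r_s$ are the Borel truncation error and the terms where $\partial_s$ hits $\chi_R(s)$, and for the latter the right justification is not the range of the $s$-integral but that on $\supp\chi_R'\cap(\supp\chi_E\times\supp\chi_E)$ one has $|\sigma|$ bounded away from zero, so $e^{-C_0\sigma^2/\ep}\chi_R'(s)\in S^\infty$. You should also note that isolating the $\ep^n$ coefficient requires expanding $t(x,\xi;\ep)$ itself via \eqref{texpand}; the paper handles this explicitly through the Cauchy product in \eqref{7prop3}.
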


\begin{proof}
We first remark that by \eqref{talsexp}
\begin{multline}\label{1prop3}
 t \bigl(x, \xi', \xi_d - i C_0 \sigma; \ep \bigr) -  t \bigl(x, \xi', \xi_d + i C_0\sigma; \ep\bigr)
 \\ 
=\sum_{\gamma\in\disk} a_\gamma (x, \ep) e^{-\frac{i}{\ep}\gamma'\xi'} 
\Bigl[ e^{-\frac{i}{\ep}\gamma_d (\xi_d - iC_0\sigma)} - 
e^{-\frac{i}{\ep}\gamma_d(\xi_d + i C_0 \sigma)}\Bigr]\\
= \sum_{\gamma\in\disk} a_\gamma (x, \ep) e^{-\frac{i}{\ep}\gamma\xi} 
2 \sinh \Bigl( \frac{\gamma_d}{\ep}C_0 \sigma\Bigr)\; .
\end{multline}
Thus from the assumptions on $\chi_R$ and $\chi_E$ it follows that the left hand side of 
\eqref{0prop3} is odd with respect to $\sigma\mapsto -\sigma$. 
Modulo $S^\infty$, \eqref{0prop3} is equivalent to 
\begin{equation}\label{2prop3}
 \chi_R(s) \chi_E(x)\chi_E(y)\Bigl[ t \Bigl(x, \xi', \xi_d - i C_0 \sigma; \ep\Bigr) -  
t \Bigl(x, \xi', \xi_d + i C_0 \sigma; \ep\Bigr)\Bigr] 
= \bigl( 2 C_0 \sigma + \ep \partial_s\bigr) q_s(x, y, \xi; \ep)\; .
\end{equation}
Here $q$ is compactly supported in $x, y$ and $s$ (and thus in $\sigma$) and $q$ is even with respect to 
$\sigma\mapsto -\sigma$ since $\partial_s = -\partial_\sigma$.
We set
\begin{align}\label{3prop3}
  g_s(x, y, \xi; \ep) &:=   \chi_R(s) \chi_E(x)\chi_E(y)\frac{1}{2 C_0 \sigma} \Bigl(t \bigl(x, \xi', \xi_d - i C_0 \sigma; \ep \bigr) -  
t \bigl(x, \xi', \xi_d + i C_0 \sigma; \ep\bigr)\Bigr)\\
&= \sum_{\ell = 0}^\infty \ep^\ell g_{\ell, s}(x, y, \xi)\nonumber
\end{align}
where by \eqref{1prop3}
\begin{equation}\label{6prop3}
g_{\ell, s}(x, y, \xi) := 
 - \chi_R(s) \chi_E(x)\chi_E(y)\sum_{\gamma\in\disk} a^{(\ell)}_\gamma(x) 
e^{-\frac{i}{\ep}\gamma\xi} \frac{1}{C_0 \sigma} 
\sinh \Bigl( \frac{\gamma_d}{\ep} C_0 \sigma\Bigr)\; .
\end{equation}
Then \eqref{2prop3} can be written as
\begin{equation}\label{4prop3}
\Bigl( 1 +  \frac{\ep}{2 C_0 \sigma}\partial_s \Bigr) q_s(x, y, \xi; \ep) = g_s (x, y, \xi; \ep)\; .
\end{equation}
Formally \eqref{4prop3} leads to the von-Neumann-series
\begin{equation}\label{5prop3}
q_s (x, y, \xi; \ep) = \sum_{m=0}^\infty \ep^m \Bigl(-\frac{1}{2C_0 \sigma} \partial_s\Bigr)^m 
g_s (x, y, \xi; \ep)\; .
\end{equation}
Using \eqref{00prop3}, \eqref{3prop3} and Cauchy-product, \eqref{5prop3} gives
\begin{equation}\label{7prop3}
q_{n, s}(x, y, \xi) = \sum_{\ell+ m = n} \Bigl(-\frac{1}{2C_0 \sigma} \partial_s\Bigr)^m g_{\ell, s}(x, y, \xi)\; .
\end{equation}
By \eqref{3prop3} $g$ and $g_{\ell}$, $\ell\in\N$, are even with respect to $\sigma\mapsto -\sigma$. 
Moreover, the operator $\frac{1}{\sigma}\partial_s = -\frac{1}{\sigma}\partial_\sigma$ 
maps a monomial in $\sigma$ of order $2m$ to a 
monomial of order $\max \{0, 2m-2\}$. Thus, for $x,y\in\supp \chi_E$ and $s\in [-R, R]$, the right hand side of \eqref{7prop3} is 
well-defined and analytic and even in $\sigma$ for any $n\in\N$. 
In particular, it is bounded at $\sigma = 0$ or equivalently at $s=\frac{x_d + y_d}{2}$. 
Therefore $q_{n,s} \in S_0^0(1)(\R^{2d}\times \T^d)$ for any $n\in\N$ and it is $\Ce^\infty_0$ with respect to $s\in\R$.

By a Borel-procedure with respect to $\ep$ there exists a symbol 
$q_s\in S_0^0(1)(\R^{2d}\times \T^d)$ which is $\Ce_0^\infty$ as a function of $s\in\R$ such
that \eqref{00prop3} holds. Moreover, $\partial_s q_s(x, y, \xi; \ep)$ is analytic in $\xi$ by uniform convergence of the 
Borel procedure and the analyticity of $q_{n,s}$. Thus 
\eqref{0prop3} holds for some $r_s \in S_0^\infty(1)(\R^{2d}\times \T^d)$ and since the left hand side of \eqref{2prop3} 
has an analytic continuation to $\C^d$ with respect to $\xi$, the same is true for $r_s(x, y, \xi; \ep)$.

\end{proof}

We remark that by \eqref{7prop3} and \eqref{6prop3}, the leading order term $q_0$ at
the point $s=\frac{x_d + y_d}{2}$ is given by
\begin{align}
q_{0,\frac{x_d + y_d}{2}}(x, y, \xi) &= - \chi_R\Bigl(\frac{x_d + y_d}{2}\Bigr) \chi_E(x)\chi_E(y) 
\sum_{\gamma\in\disk} a_\gamma^{(0)}(x) \frac{\gamma_d}{\ep} e^{-\frac{i}{\ep}\gamma\xi} \nonumber \\
&= \frac{1}{i} \chi_E(y) \chi_E (x) \partial_{\xi_d} t_0 (x, \xi) \label{8prop3}
\end{align}
where in the second step we used \eqref{texpand} and the fact that $\chi_R(\frac{x_d + y_d}{2}) = 1$ for 
$x, y \in \supp \chi_E$.

We now define the operators $Q_s$ and $R_s$ on $\ell^2(\disk)$ by
\begin{align}\label{Q_sdef}
Q_s u(x) &:= \sqrt{\frac{C_0}{\ep \pi}} (2\pi)^{-d} \sum_{y\in\disk} e^{\frac{i}{2\ep} (\phi_s(y_d) + \phi_s(x_d))} u(y)
\int_{\T^d} e^{\frac{i}{\ep}(y-x)\xi} q_s(x, y, \xi; \ep) \, d\xi \\
R_s u(x) &:= \sqrt{\frac{C_0}{\ep \pi}} (2\pi)^{-d} \sum_{y\in\disk} e^{\frac{i}{2\ep} (\phi_s(y_d) + \phi_s(x_d))} u(y)
\int_{\T^d} e^{\frac{i}{\ep}(y-x)\xi} r_s(x, y, \xi; \ep) \, d\xi \label{R_sdef}
\end{align}

Then we get the following formula for the interaction term $w_{jk}$.

\begin{prop}\label{prop4}
  For $Q_s$ given in \eqref{Q_sdef}, the interaction term is given by
\begin{equation}\label{0prop4} 
w_{jk} = \ep \skpd{Q_0 \id_E v_j}{ \id_E v_k} + O\Bigl(\ep^\infty e^{-\frac{1}{\ep}S_{jk}}\Bigr)\; .
\end{equation}
\end{prop}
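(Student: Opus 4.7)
The plan is to combine Propositions \ref{prop1}, \ref{prop2} and \ref{prop3} via an integration by parts in the parameter $s$. Starting from the representation \eqref{0prop1a} of $w_{jk}$ and substituting the oscillatory-sum form of the commutator from Proposition \ref{prop2}, we insert the cutoffs $\chi_R(s), \chi_E(x), \chi_E(y)$ freely into the kernel since each equals $1$ on the relevant support of $\id_E v_j, \id_E v_k$ and on $s\in[-R,0]$. Writing the Gaussian factor as $e^{\frac{i}{2\ep}(\phi_s(y_d)+\phi_s(x_d))} = e^{-C_0\sigma^2/\ep}\cdot e^{-C_0(x_d-y_d)^2/(4\ep)}$ and noting that the second factor does not depend on $s$, Proposition \ref{prop3} converts the $t$-difference into a total $s$-derivative plus an $S_0^\infty$-remainder. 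This yields the identity (valid on the support of $\id_E v_k$, with $\chi_E(y)$ absorbed into the symbol)
\[ \chi_R(s)[T_\ep,\pi_s]\id_E v_j = \ep\partial_s(Q_s\id_E v_j) + R_s\id_E v_j. \]

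Pairing with $\id_E v_k$, integrating over $s\in[-R,0]$, and invoking Proposition \ref{prop1} produces the decomposition
\[ w_{jk} = \ep\skpd{Q_0\id_E v_j}{\id_E v_k} - \ep\skpd{Q_{-R}\id_E v_j}{\id_E v_k} + \int_{-R}^0\skpd{R_s\id_E v_j}{\id_E v_k}\, ds + \expord{S_0+a-\eta}. \]
Three error terms must each be absorbed into $O(\ep^\infty e^{-S_{jk}/\ep})$. The Proposition \ref{prop1} remainder is the easiest: Hypothesis \ref{hypkj} gives $S_{jk}<S_0+a$, so for $\eta<S_0+a-S_{jk}$ one has $\expord{S_0+a-\eta}=O(\ep^\infty e^{-S_{jk}/\ep})$. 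For the boundary term at $s=-R$, condition \eqref{EHR} together with compactness of $E$ ensures $\sigma=\tfrac{x_d+y_d}{2}+R\geq c>0$ for all $x,y\in E$, so the kernel of $Q_{-R}$ carries an extra Gaussian factor $e^{-C_0c^2/\ep}$; combining this with the weighted exponential decay $\|e^{d^i/\ep}v_i\|_{\ell^2}\leq\ep^{-N_0}$ of \cite{kleinro5}, Prop. 3.1, and the triangle inequality $d^j(y)+d^k(x)\geq S_{jk}-d(x,y)$ (the excess $e^{d(x,y)/\ep}$ being absorbed by a $\xi$-contour deformation exploiting the analyticity of $q_{-R}$) yields the desired estimate.

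The genuinely new step, and the main obstacle, is bounding the $R_s$-integral. The pointwise smallness $r_s\in S_0^\infty$ supplies an arbitrary polynomial gain $|r_s|\leq C_N\ep^N$ but no exponential decay; the factor $e^{-S_{jk}/\ep}$ must come from the weighted $\ell^2$-estimates for $v_j, v_k$ together with $d^j(y)+d^k(x)\geq S_{jk}-d(x,y)$, which in turn forces us to kill the unbounded factor $e^{d(x,y)/\ep}$. This is achieved by exploiting the analyticity of $r_s$ in $\xi\in\C^d$ (granted by Proposition \ref{prop3}) and deforming the $\xi$-contour so that $e^{\frac{i}{\ep}(y-x)\xi}$ picks up a factor $e^{-c|y-x|/\ep}$ for suitable $c>0$; since on the compact set $E$ the Finsler distance satisfies $d(x,y)\leq C|x-y|$, choosing $c>C$ produces the required exponential cancellation. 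A Schur-type kernel estimate then combines the $\ep^N$-smallness of $r_s$, the contour-deformed kernel, and the weighted estimates for $v_j, v_k$ to deliver the bound $O(\ep^\infty e^{-S_{jk}/\ep})$ after interchange of integration and summation.
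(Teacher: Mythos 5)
Your overall skeleton coincides with the paper's: start from Proposition~\ref{prop1}, substitute the commutator formula of Proposition~\ref{prop2}, use Proposition~\ref{prop3} and \eqref{1prop4} to write $\chi_R\chi_E[T_\ep,\pi_s]\chi_E\id_E v_j = (\ep\partial_sQ_s+R_s)\id_E v_j$, integrate $\partial_s$ over $[-R,0]$, and then show that the boundary term $S_1=\ep\skpd{Q_{-R}\id_E v_j}{\id_E v_k}$ and the remainder $S_2=\ep\int_{-R}^0\skpd{R_s\id_E v_j}{\id_E v_k}\,ds$ are $O(\ep^\infty e^{-S_{jk}/\ep})$, with the Proposition~\ref{prop1} error absorbed since $S_{jk}<S_0+a$. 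Where you differ is in the technical implementation of the two error estimates.

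For $S_1$ the paper is considerably leaner than your argument: it simply passes to $\tilde{Q}_{-R}=\widetilde{\Op}_\ep^\T(\sqrt{C_0\ep/\pi}\,q_{-R})$ via \eqref{11aprop4}, bounds $\|\tilde{Q}_{-R}\|_{\ell^2\to\ell^2}\le C\sqrt{\ep}$ by Corollary~\ref{cor1app}, and observes that $\tilde{v}_{i,-R}=e^{-C_0(\cdot_d+R)^2/(2\ep)}v_i$ already carries the Gaussian factor $e^{-C_0R_E^2/(2\ep)}$ with $R_E=\min_{x\in E}|x_d+R|>0$, so with the normalization $\|v_i\|_{\ell^2}=1$ one gets $|S_1|\le C\sqrt{\ep}\,e^{-C_0R_E^2/\ep}$, which is $O(\ep^\infty e^{-S_{jk}/\ep})$ once $R$ (or $C_0$) is taken large. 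Your route also works, but the extra machinery (the weighted norms $\|e^{d^i/\ep}v_i\|_{\ell^2}$, the Finsler triangle inequality, and a $\xi$-contour deformation) is not needed for this term; you pay more than you gain. For $S_2$ the two approaches are conceptually the same — both must exploit the analyticity in $\xi$ to reconcile the $\ep^\infty$ gain from $r_s\in S_0^\infty(1)$ with the exponential weight $e^{-S_{jk}/\ep}$ — but the packaging differs. You propose a pointwise kernel bound via contour deformation $\xi\mapsto\xi+i\zeta(x,y)$ producing $e^{-c|x-y|/\ep}$, followed by a Schur estimate using $d^j(y)+d^k(x)\ge S_{jk}-d(x,y)$ and $d(x,y)\le C|x-y|$ on $E$. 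The paper instead conjugates $\tilde{R}_s$ by the weight $e^{d^k/\ep}$ (using \eqref{11aprop4}), invokes Proposition~\ref{prop3app} to conclude the conjugated operator is again the quantization of a symbol in $S_0^\infty(1)$ (this is where the contour deformation of Lemma~\ref{Lem1} is hidden), then uses Corollary~\ref{cor1app} for the $\ell^2$-operator bound and Cauchy--Schwarz together with the weighted estimate of \cite{kleinro5}, Proposition 3.1, pulling out $e^{-S_{jk}/\ep}$ directly from $d^j(x)+d^k(x)\ge S_{jk}$ at a single point $x$. The net effect is the same; the paper's operator-theoretic framing is cleaner and reuses the appendix infrastructure, while your direct Schur argument is more explicit but requires some care about summability. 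Both routes implicitly require that $r_s$, produced by the Borel procedure in Proposition~\ref{prop3}, is not merely analytic in $\xi$ but also satisfies the $O(\ep^N)$ bounds uniformly on complex strips $|\Im\xi|\le c$; this is needed for your contour shift and equally for the paper's application of Proposition~\ref{prop3app} to $r_s$.
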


\begin{proof}
We first remark that by the definition \eqref{phinull} of $\phi_s$ we have
\begin{equation}\label{1prop4}
\frac{i}{2\ep}\bigl( \phi_s(y_d) + \phi_s(x_d)\bigr) = -\frac{C_0}{\ep} \Bigl[ \bigl(\frac{x_d + y_d}{2}-s\bigr)^2 + \frac{1}{4} (y_d - x_d)^2\Bigr]\; .
\end{equation}
Combining Proposition \ref{prop2} with Proposition \ref{prop3} and \eqref{1prop4} gives
\begin{align}
\chi_R(s)\chi_E &[T_\ep, \pi_s] \chi_E \id_E v_j(x) = \sqrt{\frac{C_0}{\ep \pi}} (2\pi)^{-d} \sum_{y\in\disk} \id_E(y) v_j(y) 
e^{-\frac{C_0}{4\ep}(y_d - x_d)^2}\nonumber\\
 &\times\,\int_{\T^d} e^{\frac{i}{\ep}(y-x)\xi} \ep \partial_s 
\Bigl( e^{-\frac{C_0}{\ep}(\frac{x_d + y_d}{2} - s)^2} q_s(x, y, \xi; \ep)\Bigr)
 +  e^{-\frac{C_0}{\ep}(\frac{x_d + y_d}{2} - s)^2} 
 r_s (x, y, \xi; \ep)\, d\xi\nonumber\\
 &= \bigl( \ep\partial_s Q_s + R_s \bigr) \id_E v_j(x) \label{2prop4}
\end{align}
where the second equation follows from the definitions \eqref{Q_sdef} and \eqref{R_sdef}.
Thus by \eqref{0prop1a} we get for any $\eta>0$
\begin{align}\label{3prop4}
  w_{jk} &= \int_{-R}^0 \skpd{\bigl(\ep \partial_s Q_s + R_s\bigr) \id_E v_j}{\id_E v_k} \, ds + O\Bigl(e^{-\frac{S_0 + a - \eta}{\ep}}\Bigr)\\
  &= \ep \skpd{Q_0 \id_E v_j}{\id_E v_k} - S_1 + S_2 + O\Bigl(e^{-\frac{S_0 + a - \eta}{\ep}}\Bigr) \; ,\nonumber
\end{align}
where
\begin{align}\label{4prop4}
 S_1 &:= \ep \skpd{Q_{-R}\id_E v_j}{\id_E v_k}\\
 S_2 &:= \ep \int_{-R}^0 \skpd{R_s \id_E v_j}{\id_E v_k}\, ds\label{5prop4}
\end{align}
To analyse $S_2$, we first introduce the following notation, which will be used again later on. We set (see Definition \ref{pseudo})
\begin{align}\label{8prop4}
\tilde{u}_s(x) &:= e^{\frac{i}{2\ep}\phi_s(x_d)} u(x) = e^{-\frac{C_0}{2\ep}(x_d - s)^2} u(x) \\
\tilde{Q}_s &:= \widetilde{\Op}_\ep^\T \Bigl(\sqrt{\frac{C_0 \ep}{\pi}} q_s\Bigr) \label{9prop4}\\
\tilde{R}_s &:=  \widetilde{\Op}_\ep^\T \Bigl(\sqrt{\frac{C_0 \ep}{\pi}} r_s\Bigr)\,, \label{11prop4}
\end{align}
then 
\begin{equation}\label{11aprop4}
 \ep \skpd{Q_{s}u}{v} = \skpd{\tilde{Q}_s \tilde{u}_s}{\tilde{v}_{s}} \quad\text{ and }\quad 
 \ep \skpd{R_{s}u}{v} = \skpd{\tilde{R}_s \tilde{u}_s}{\tilde{v}_{s}}\; . 
\end{equation}
To analyse $S_2$ we write, using \eqref{11aprop4} 
\begin{align}\label{6prop4}
  \bigl| S_2\bigr| &=  \Bigl| \int_{-R}^0 \skpd{e^{-\frac{d^k}{\ep}} \tilde{R}_s e^{-\frac{d^k}{\ep}} e^{-\frac{(d^k + d^j)}{\ep}} 
	e^{\frac{d^j}{\ep}}\id_E \tilde{v}_{j,s}}
    {e^{\frac{d^k}{\ep}}\id_E \tilde{v}_{k,s}}\, ds \Bigr| \\
    &\leq e^{-\frac{S_{jk}}{\ep}}\int_{-R}^0 \Bigl\| e^{-\frac{d^k}{\ep}} \tilde{R}_s e^{-\frac{d^k}{\ep}}
e^{\frac{d^j}{\ep}}\id_E \tilde{v}_{j,s}\Bigr\|_{\ell^2}
 \Bigl\| e^{\frac{d^k}{\ep}}\id_E \tilde{v}_{k,s}\Bigr\|_{\ell^2}\, ds\; .\nonumber
\end{align}
Since $r_s\in S_0^\infty (1)(\R^{2d}\times \T^d)$, it follows from Corollary \ref{cor1app} together with Proposition \ref{prop3app} that for some $C>0$
\begin{align}
\bigl| S_2\bigr| &\leq C \ep^\infty e^{-\frac{S_{jk}}{\ep}}\int_{-R}^0 \Bigl\|e^{\frac{d^j}{\ep}}\id_E \tilde{v}_{j,s}
\Bigr\|_{\ell^2} \Bigl\| e^{\frac{d^k}{\ep}}\id_E \tilde{v}_{k,s}\Bigr\|_{\ell^2}\, ds\nonumber\\
&= O\Bigl(\ep^\infty e^{-\frac{S_{jk}}{\ep}}\Bigr)\label{7prop4}
\end{align}
where for the second step we used weighted estimates for the Dirichlet eigenfunctions given in \cite{kleinro5}, Proposition 3.1, together with 
the fact that $|\tilde{u}_s(x)|\leq |u(x)|$.

By \eqref{4prop4} and \eqref{11aprop4} we get
\begin{equation}
\bigl| S_1\bigr|  =  \Bigl|\skpd{\tilde{Q}_{-R}\id_E \tilde{v}_{j, -R}}{\id_E \tilde{v}_{k, -R}}\Bigr| 
\leq \bigl\| \id_E \tilde{Q}_{-R}\id_E \tilde{v}_{j, -R} \bigr\|_{\ell^2}\, \bigl\|\id_E \tilde{v}_{k, -R}\bigr\|_{\ell^2}\; .
\end{equation}
Again by Corollary \ref{cor1app} together with \eqref{8prop4}, \eqref{9prop4} and since $q_s\in S_0^0(1)(\R^{2d}\times \T^d)$ 
we have for some $C>0$
\begin{equation}\label{10prop4}
\bigl| S_1\bigr|\leq  C \sqrt{\ep} \bigl\|\id_E e^{-\frac{C_0}{2\ep}(\,. \,+R)^2}v_j \bigr\|_{\ell^2}\, 
\bigl\|\id_E e^{-\frac{C_0}{2\ep}(\,. \,+R)^2}v_k\bigr\|_{\ell^2} \leq \sqrt{\ep} C e^{-\frac{C_0}{\ep}R_E^2}
\end{equation}
for $R_E:= \min_{x\in E} |x_d - R|$. Thus taking $R$ large enough such that $R_E>S_{jk}$ and inserting \eqref{10prop4} and 
\eqref{7prop4} in \eqref{3prop4} proves the proposition.

\end{proof}

In the next proposition we show that, modulo a small error, the interaction term only depends on a small neighborhood
of the point or manifold respectively where the geodesics between $x^j$ and $x^k$ intersect $\Hd_d$. Since the proof is analogue, we
discuss the point and manifold case simultaneously.

\begin{prop}\label{prop5}
Let $\Psi\in \Ce_0^\infty (\stackrel{\circ}{M}_{j}\cap \stackrel{\circ}{M}_{k}\cap E)$ denote a cut-off-function near
$y_0\in \Hd_d$ (or $G_0\subset \Hd_d$ respectively) such that $\Psi=1$ in a neighborhood $U_\Psi$ of $y_0$ (or $G_0$ respectively)
and for some $C>0$ 
\begin{equation}\label{1prop5}
\frac{C_0}{2}x_d^2 + d^j(x) + d^k(x) - S_{jk} >C \, , \qquad x\in \supp (1-\Psi)\, .
\end{equation}
Then, for the restriction $\Psi^\ep:= r_\ep \Psi$ of $\Psi$ to the lattice $\disk$ (see \eqref{restrict}),
\begin{equation}\label{0prop5}
w_{jk} =  \ep \skpd{Q_0 \Psi^\ep v_j}{\Psi^\ep v_k} + O\Bigl(\ep^\infty e^{-\frac{1}{\ep}S_{jk}}\Bigr)\; .
\end{equation}
\end{prop}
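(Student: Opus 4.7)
The plan is to start from Proposition \ref{prop4}, namely $w_{jk} = \ep \skpd{Q_0 \id_E v_j}{\id_E v_k} + O(\ep^\infty e^{-S_{jk}/\ep})$, and show that replacing $\id_E$ by $\Psi^\ep$ introduces only an $O(\ep^\infty e^{-S_{jk}/\ep})$ error. Decompose $\id_E = \Psi^\ep + A$ with $A := \id_E(1-\Psi^\ep)$, whose support lies in $E\cap \supp(1-\Psi)$ --- precisely the region where hypothesis \eqref{1prop5} provides the strict pointwise inequality. Expanding bilinearly produces three cross terms of the form $\ep\skpd{Q_0 u_1}{u_2}$, each containing at least one factor of the type $A v_m$, and the task reduces to bounding each by $O(\ep^\infty e^{-S_{jk}/\ep})$.

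The central mechanism is the Gaussian factor $e^{-C_0 x_d^2/(2\ep)}$ that is built into the kernel of $\ep Q_0$: via the tilde notation \eqref{8prop4}--\eqref{11aprop4}, $\ep\skpd{Q_0 u}{v} = \skpd{\tilde Q_0 \tilde u_0}{\tilde v_0}$ with $\tilde u_0(x) = e^{-C_0 x_d^2/(2\ep)} u(x)$. On $\supp(1-\Psi)$, hypothesis \eqref{1prop5} converts this Gaussian into the exponential gain
\[
e^{-C_0 x_d^2/(2\ep)} \leq e^{-(S_{jk}+C)/\ep} e^{(d^j(x) + d^k(x))/\ep}.
\]
For each cross term I would use this bound on the $A$-factor (in, say, the variable $y$) to produce a prefactor $e^{-(S_{jk}+C)/\ep}$ together with residual weights $e^{d^j(y)/\ep}$ and $e^{d^k(y)/\ep}$, and then transfer these residual weights to act as $e^{d^j/\ep} v_j$ and $e^{d^k/\ep} v_k$ respectively, where they are absorbed by the exponential decay estimate $\|e^{d^m/\ep} v_m\|_{\ell^2} \leq \ep^{-N_0}$ from \cite{kleinro5}, Proposition 3.1.

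Since the $(1-\Psi)$-cutoff lives on only one variable, the doubly-weighted factor $e^{(d^j+d^k)/\ep}$ initially sits on that variable and one of the weights must be moved across the inner product. This is carried out either via the Lipschitz continuity of $d^k$, combined with the exponential decay in $|x-y|/\ep$ of the kernel of $\tilde Q_0$ obtained by deforming the $\xi$-contour into $\C^d$ (justified by the analyticity of $q_0$ guaranteed in Proposition \ref{prop3}, in the spirit of Lemma \ref{Lem1}), or equivalently by a two-piece splitting $\tfrac{C_0}{2} x_d^2 > \eta(S_{jk}+C) - \eta(d^j+d^k)$ with $\eta \in (0,1)$ followed by redistribution of the complementary Gaussian. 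After the redistribution, $\ell^2$-boundedness of the effective operator (essentially $\tilde Q_0$ conjugated by bounded multiplicative weights) is provided by the symbolic calculus of Appendix \ref{app1}, using $q_0 \in S^0_0(1)$, and Cauchy--Schwarz (or a Schur test on the resulting non-negative kernel) produces
\[
|\text{cross term}| \lesssim \ep^{1/2 - 2N_0} e^{-(S_{jk}+C)/\ep} = O\bigl(\ep^\infty e^{-S_{jk}/\ep}\bigr),
\]
since $e^{-C/\ep}$ beats every polynomial in $\ep$. Combined with Proposition \ref{prop4}, this yields \eqref{0prop5}.

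The main obstacle is the weight bookkeeping in the second step: the gain coming from \eqref{1prop5} deposits \emph{both} weights $e^{d^j/\ep}$ and $e^{d^k/\ep}$ on the same variable as $A$, whereas the Dirichlet-decay estimates naturally want $e^{d^j/\ep}$ on $v_j$ and $e^{d^k/\ep}$ on $v_k$, on opposite sides of the inner product. Making this transfer respect the kernel structure of $\tilde Q_0$ --- without spending the reserve $e^{-C/\ep}$ --- is the delicate point, and is where the analyticity of $q_0$ in $\xi$ (providing rapid decay of the integral kernel of $\tilde Q_0$ in $|x-y|/\ep$) is essential.
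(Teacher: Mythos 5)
Your decomposition $\id_E = \Psi^\ep + \id_E(1-\Psi^\ep)$, the resulting three cross-terms, the use of the Gaussian built into the tilde notation together with hypothesis \eqref{1prop5} to produce the $e^{-(S_{jk}+C)/\ep}$ gain, and the need to transfer the remaining $e^{d^k/\ep}$ weight across the inner product via the analyticity of $q_0$ in $\xi$ reproduce the paper's proof faithfully. The paper carries out that last transfer cleanly at the symbol level, conjugating $\tilde Q_0^*$ by $e^{\pm d^k/\ep}$ and citing Proposition \ref{prop3app} and Corollary \ref{cor1app} to get an operator of norm $O(\sqrt\ep)$ --- which is exactly the contour-shift mechanism you describe --- but the phrase \emph{bounded multiplicative weights} is a slip: $e^{\pm d^k/\ep}$ are not bounded, and the $\ell^2$-boundedness of the conjugated operator is precisely what the analytic deformation $\xi\mapsto \xi - i\nabla d^k$ of Proposition \ref{prop3app} provides, not a triviality of bounded multipliers.
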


\begin{proof}
Using Proposition \ref{prop4} and the notation \eqref{8prop4}, \eqref{9prop4} together with \eqref{11aprop4} we have
\begin{align}\label{2prop5}
w_{jk}&=  \skpd{\tilde{Q}_0 \id_E \tilde{v}_{j,0}}{ \id_E \tilde{v}_{k,0}} + O\Bigl(\ep^\infty e^{-\frac{1}{\ep}S_{jk}}\Bigr)\\
&= \ep \skpd{Q_0 \Psi^\ep  \id_E v_j}{\Psi^\ep  \id_E v_k} + R_1 + R_2 + R_3 +  O\Bigl(\ep^\infty e^{-\frac{1}{\ep}S_{jk}}\Bigr)\nonumber
\end{align}
where, using $\id_E \Psi = \Psi$,
\begin{align}\label{3prop5}
R_1 &= \skpd{\tilde{Q}_0 (1-\Psi^\ep ) \id_E \tilde{v}_{j,0}}{ \Psi^\ep  \tilde{v}_{k,0}}\\
R_2 &= \skpd{\tilde{Q}_0\Psi^\ep  \tilde{v}_{j,0}}{(1-\Psi^\ep ) \id_E \tilde{v}_{k,0}}\\
R_3 &= \skpd{\tilde{Q}_0 (1-\Psi^\ep )\id_E \tilde{v}_{j,0}}{ (1-\Psi^\ep )\id_E \tilde{v}_{k,0}}\; .
\end{align}
To estimate $|R_1|$ we write
\begin{align*}
\bigl|R_1\bigr| &= \Bigl| \skpd{e^{-\frac{1}{\ep}(d^k + d^j)}(1-\Psi^\ep )e^{\frac{d^j}{\ep}} \id_E \tilde{v}_{j,0}}
{\chi_E e^{\frac{d^k}{\ep}}\tilde{Q}_0^*e^{-\frac{d^k}{\ep}}e^{\frac{d^k}{\ep}}\Psi^\ep  \tilde{v}_{k,0}}\Bigr|\\
&\leq \Bigl\|e^{-\frac{1}{\ep}(d^k + d^j + \frac{C_0}{2}(.)_d^2)} (1-\Psi^\ep ) e^{\frac{d^j}{\ep}} \id_E v_j\Bigr\|_{\ell^2} 
\Bigl\| \chi_E e^{\frac{d^k}{\ep}}\tilde{Q}_0^*e^{-\frac{d^k}{\ep}}e^{\frac{d^k}{\ep}} \Psi^\ep \tilde{v}_{k,0}\Bigr\|_{\ell^2}
\end{align*}
where $\chi_E$ denotes a cut-off function as introduced above Proposition \ref{prop3}.
Since by \eqref{9prop4} 
\[ \tilde{Q}_0^* = \widetilde{\Op}_\ep^T \Bigl(\sqrt{\frac{C_0 \ep}{\pi}} q_0^*\Bigr)\quad\text{for}\quad q_0^*(x,y,\xi;\ep)= 
q_0(y,x,\xi;\ep)\in S_0^0(1)(\R^{2d}\times \T^d)\, ,\] 
it follows from Proposition \ref{prop3app} that
$\chi_E e^{\frac{d^k}{\ep}}\tilde{Q}_0^*e^{-\frac{d^k}{\ep}}$ is the 0-quantization of a symbol 
$q_{0,d^k,0}\in S^{\frac{1}{2}}_0(1)(\R^d\times \T^d)$.
Thus by Corollary \ref{cor1app} and \eqref{1prop5}, for some $C, C'>0$,
\begin{equation}\label{4prop5}
 |R_1| \leq e^{-\frac{S_{jk}+C}{\ep}} C' \sqrt{\ep} \Bigl\|e^{\frac{d^j}{\ep}} v_j\Bigr\|_{\ell^2}  \Bigl\|e^{\frac{d^k}{\ep}} v_k\Bigr\|_{\ell^2} = 
 O\Bigl(\ep^\infty e^{-\frac{S_{jk}}{\ep}}\Bigr)
\end{equation}
where the last estimate follows from \cite{kleinro5}, Proposition 3.1.\\ 
Similar arguments show $|R_2| = O(\ep^\infty e^{-\frac{S_{jk}}{\ep}}) = |R_3|$, 
thus by \eqref{2prop5} this finishes the proof. 

\end{proof}

In the next step, we show that modulo the same error term, the Dirichlet eigenfunctions $v_m,\, m=j,k,$ 
can be replaced by the approximate eigenfunctions 
$\hat{v}_m^\ep$  given  in \eqref{hatvm}.
We showed in \cite{kleinro5}, Theorem 1.7, that for some smooth functions $b^m, b^m_\ell$, compactly supported in a 
neighborhood of $M_m$, 
the approximate eigenfunctions $\hat{v}^\ep_m\in \ell^2(\disk)$ are given by
the restrictions to $\disk$ of 
\begin{equation}\label{approx}
\hat{v}_m := \ep^{\frac{d}{4}} e^{-\frac{d^m}{\ep}} b^m \, , \qquad\text{where}\quad\
 b^m \sim \sum_{\ell\geq M} \ep^\ell b^m_\ell
\end{equation}
(using the notation in \cite{kleinro5}, these restrictions are $\hat{v}^\ep_{m,1,0}$).
In \cite{kleinro5}, Theorem 1.8 we proved that for any $K$ compactly supported in $M_m$ the estimate
\begin{equation}\label{approxl2}
 \Bigl\| e^{\frac{d^m}{\ep}}(v_m - \hat{v}^\ep_m)\Bigr\|_{\ell^2(K)} = O\bigl(\ep^\infty)\; .
\end{equation}
holds. Using \eqref{approxl2} we get the following Proposition.

\begin{prop}\label{prop6}
Let $ \hat{v}^\ep_m\in \ell^2(\disk),\, m=j,k,$ denote the approximate eigenfunctions of $H_\ep$ in $M_m$ constructed in 
\cite{kleinro5}, Theorem 1.7, then, for $\Psi^\ep$ as defined in Proposition \ref{prop5},
\begin{equation}\label{0prop6}
w_{jk} =  \ep \skpd{Q_0 \Psi^\ep  \hat{v}^\ep_j}{\Psi^\ep  \hat{v}^\ep_k } + O\Bigl(\ep^\infty e^{-\frac{1}{\ep}S_{jk}}\Bigr)\; .
\end{equation}
\end{prop}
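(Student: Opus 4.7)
The plan is to combine Proposition \ref{prop5} with the weighted $\ell^2$-approximation \eqref{approxl2} to replace $v_m$ by $\hat v^\ep_m$ in the inner product and to control the resulting error. By bilinearity I decompose
\[
\ep\skpd{Q_0 \Psi^\ep v_j}{\Psi^\ep v_k} - \ep\skpd{Q_0 \Psi^\ep \hat v^\ep_j}{\Psi^\ep \hat v^\ep_k} = A_j + A_k,
\]
where
\[
A_j := \ep\skpd{Q_0 \Psi^\ep (v_j - \hat v^\ep_j)}{\Psi^\ep v_k},\qquad A_k := \ep\skpd{Q_0 \Psi^\ep \hat v^\ep_j}{\Psi^\ep (v_k - \hat v^\ep_k)}.
\]
In view of Proposition \ref{prop5} it is enough to show $A_j, A_k = O(\ep^\infty e^{-S_{jk}/\ep})$.

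To estimate $A_j$, I proceed as in the analysis of $R_1$ in the proof of Proposition \ref{prop5}. Passing to the tilde-notation \eqref{8prop4}--\eqref{11aprop4} and moving $\tilde Q_0$ to the second slot via the adjoint, I rewrite
\[
A_j = \skpd{e^{-(d^j+d^k)/\ep}\, e^{d^j/\ep}\,\widetilde{\Psi^\ep(v_j-\hat v^\ep_j)}_0}{\chi_E\, e^{d^k/\ep}\tilde Q_0^* e^{-d^k/\ep}\,\bigl(e^{d^k/\ep}\,\widetilde{\Psi^\ep v_k}_0\bigr)}.
\]
Since $q_0$ is analytic in $\xi$ by Proposition \ref{prop3}, Proposition \ref{prop3app} identifies $\chi_E e^{d^k/\ep}\tilde Q_0^* e^{-d^k/\ep}$ as the $0$-quantization of a symbol in $S^{1/2}_0(1)(\R^d\times\T^d)$, and Corollary \ref{cor1app} gives its $\ell^2$-boundedness (with at worst a factor $\sqrt{\ep}$). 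On $\supp \Psi^\ep$ the triangle inequality yields $d^j(x)+d^k(x)\geq S_{jk}$ pointwise, so the multiplier $e^{-(d^j+d^k)/\ep}$ supplies a factor $e^{-S_{jk}/\ep}$. Applying Cauchy--Schwarz together with the weighted estimate $\|e^{d^j/\ep}(v_j-\hat v^\ep_j)\|_{\ell^2(\supp\Psi)}=O(\ep^\infty)$ from \eqref{approxl2} and the polynomial bound $\|e^{d^k/\ep}v_k\|_{\ell^2}\leq \ep^{-N_0}$ from \cite{kleinro5}, Proposition 3.1, then yields $|A_j|=O(\ep^\infty e^{-S_{jk}/\ep})$. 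The cross term $A_k$ is treated by the same argument with the roles of $j,k$ exchanged, observing that $\hat v^\ep_k$ satisfies the same weighted $\ell^2$-bound as $v_k$ directly from the WKB form \eqref{approx}.

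The main obstacle is the symbolic-calculus step of controlling the conjugated operator $\chi_E e^{d^k/\ep}\tilde Q_0^* e^{-d^k/\ep}$. In contrast to the treatment of $R_1,\ldots,R_5$ in Proposition \ref{prop5}, where the separation condition \eqref{1prop5} between $\supp\Psi^\ep$ and $\supp(1-\Psi^\ep)$ supplied the exponential smallness directly, here both arguments are supported on $\supp\Psi^\ep$, so $e^{-S_{jk}/\ep}$ must be extracted pointwise from the triangle inequality and the whole $\ep^\infty$ gain must come from the approximation \eqref{approxl2}. The requisite operator-theoretic ingredient is Proposition \ref{prop3app}, which rests essentially on the analyticity of $q_0$ in $\xi$ from Proposition \ref{prop3} and encodes the contour deformation $\xi\mapsto \xi - i\nabla d^k$ that absorbs the exponential weight into a controlled symbol.
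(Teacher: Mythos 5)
Your proof is correct and follows essentially the same route as the paper: you decompose via Proposition \ref{prop5}, and control the two cross terms by conjugating with the weights $e^{\pm d^k/\ep}$ so that Proposition \ref{prop3app} (analyticity of $q_0$ in $\xi$) and Corollary \ref{cor1app} give an $\ell^2$-bounded operator, while the pointwise triangle inequality $d^j+d^k\geq S_{jk}$ supplies $e^{-S_{jk}/\ep}$, \eqref{approxl2} supplies the $\ep^\infty$ gain, and the polynomial weighted bounds from \cite{kleinro5}, Proposition 3.1 (or directly from the WKB form \eqref{approx} for the approximate eigenfunctions) close the estimate. The only cosmetic difference from the paper's argument is that you move $\tilde Q_0$ to the second slot as $\tilde Q_0^*$ (mirroring the paper's treatment of $R_1$ in Proposition \ref{prop5}) rather than conjugating $\tilde Q_0$ in the first slot; these are equivalent manipulations.
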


\begin{proof}
 By Proposition \ref{prop5}
\begin{multline}\label{1prop6}
w_{jk} =  \ep \skpd{Q_0 \Psi^\ep \hat{v}^\ep_j}{\Psi^\ep \hat{v}^\ep_k } + \ep \skpd{Q_0 \Psi^\ep (v_j - \hat{v}^\ep_j)}{\Psi^\ep v_k }\\ +
\ep \skpd{Q_0 \Psi^\ep \hat{v}^\ep_j}{\Psi^\ep (v_k - \hat{v}^\ep_k) }+ O\Bigl(\ep^\infty e^{-\frac{1}{\ep}S_{jk}}\Bigr)\; .
\end{multline}
Using the notation \eqref{8prop4}, \eqref{9prop4} with $\tilde{u}:= \tilde{u}_0$ together with \eqref{11aprop4}, we can write
\begin{align}
 \bigl| \ep \skpd{Q_0 \Psi^\ep (v_j - \hat{v}^\ep_j)}{\Psi^\ep v_k }\bigr| &= 
\bigl|  \skpd{\tilde{Q}_0 \Psi^\ep (\tilde{v}_j - \tilde{\hat{v}}^\ep_j)}{\Psi^\ep \tilde{v}_k }\bigr| \nonumber\\
 &= \bigl| \skpd{\chi_E e^{-\frac{d^k}{\ep}}\tilde{Q}_0 e^{\frac{d^k}{\ep}} \chi_E \Psi^\ep e^{-\frac{d^k + d^j}{\ep}}
 e^{\frac{d^j}{\ep}}(\tilde{v}_j - \tilde{\hat{v}}^\ep_j)}{e^{\frac{d^k}{\ep}}\Psi^\ep \tilde{v}_k }\bigr|\nonumber \\
 &\leq e^{-\frac{S_{jk}}{\ep}}\sqrt{\ep} C \bigl\| \Psi^\ep e^{- \frac{C_0 (.)_d^2}{\ep}} 
e^{\frac{d^j}{\ep}}(v_j - \hat{v}^\ep_j)\bigr\|_{\ell^2} 
 \bigl\| \Psi^\ep e^{\frac{d^k}{\ep}}v_k \bigr\|_{\ell^2}\; ,\label{2prop6}
\end{align}
where, analog to \eqref{4prop5}, the last estimate follows from Proposition \ref{prop3} together with Corollary \ref{cor1app} for the operator
$\chi_E e^{-\frac{d^k}{\ep}}\tilde{Q}_0 e^{\frac{d^k}{\ep}} \chi_E$. Since $\Psi$ is compactly supported in 
$\stackrel{\circ}{M}_j$, we get by \eqref{approxl2} 
for any $N\in\N$
\begin{equation}\label{3prop6}
 \bigl\| \Psi^\ep e^{- \frac{C_0 (.)_d^2}{\ep}} e^{\frac{d^j}{\ep}}(v_j -\hat{v}^\ep_j)\bigr\|_{\ell^2} \leq 
 \bigl\| e^{\frac{d^j}{\ep}}(v_j - \hat{v}^\ep_j)\bigr\|_{\ell^2(\supp \Psi)} = O(\ep^N)\; . 
\end{equation}
Since by \cite{kleinro5}, Proposition 3.1
\begin{equation}\label{4prop6}
 \bigl\| \Psi^\ep e^{\frac{d^k}{\ep}}v_k \bigr\|_{\ell^2} \leq C \ep^{-N_0}
\end{equation}
for some $C>0$, $N_0\in\N$, we can conclude by inserting \eqref{4prop6} and \eqref{3prop6} in \eqref{2prop6}
\begin{equation}\label{5prop6}
 \bigl| \ep \skpd{Q_0 \Psi^\ep (v_j - \hat{v}^\ep_j)}{\Psi^\ep v_k }\bigr| = O\Bigl(\ep^\infty e^{-\frac{S_{jk}}{\ep}}\Bigr)\; .
\end{equation}
Analog arguments show
\begin{equation}\label{6prop6}
 \bigl| \ep \skpd{Q_0 \Psi^\ep \hat{v}^\ep_j}{\Psi^\ep (v_k - \hat{v}^\ep_k)}\bigr| = O\Bigl(\ep^\infty e^{-\frac{S_{jk}}{\ep}}\Bigr)\; .
\end{equation}
Inserting \eqref{5prop6} and \eqref{6prop6} in \eqref{1prop6} gives \eqref{0prop6}.

\end{proof}

Proposition \ref{prop6} together with \eqref{approx}, \eqref{8prop4} and \eqref{11aprop4} lead at once to the following corollary.

\begin{cor}\label{cor1}
For $b^j, b^k\in \Ce_0^\infty (\R^d\times (0,\ep_0])$ as given in \eqref{hatvm}, $\Psi$ as defined in Proposition \ref{prop5}
and the restriction map $r_\ep$ given in \eqref{restrict} we have
\begin{equation}\label{0cor1}
 w_{jk} =  \ep^{\frac{d}{2}} e^{-\frac{S_{jk}}{\ep}} \skpd{\hat{Q}_0 r_\ep \Psi b^j}{ e^{-\frac{\varphi}{\ep}}\Psi b^k} 
+ O\Bigl(\ep^\infty e^{-\frac{1}{\ep}S_{jk}}\Bigr)
\end{equation}
where for $\tilde{Q}_0$ defined in \eqref{9prop4} we set
\begin{align}\label{1cor1}
 \varphi (x) &:= d^j(x) + d^k(x) + C_0|x_d|^2 - S_{jk} \\
 \hat{Q}_0 &:= e^{\frac{1}{2\ep}C_0(.)_d^2} e^{\frac{d^j}{\ep}} \tilde{Q}_0 
e^{-\frac{d^j}{\ep}}e^{-\frac{1}{2\ep}C_0(.)_d^2} \; .\label{2cor1}
\end{align}
\end{cor}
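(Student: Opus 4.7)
The proof is a direct algebraic rearrangement starting from Proposition \ref{prop6}, so the plan involves no substantive analytic obstacle; the only care required is bookkeeping of the various exponential weights.

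First I would substitute the WKB form \eqref{approx} of the approximate eigenfunctions $\hat v_m^\ep = \ep^{d/4} e^{-d^m/\ep} b^m$ into
\[
w_{jk} = \ep \skpd{Q_0 \Psi^\ep \hat v_j^\ep}{\Psi^\ep \hat v_k^\ep} + O\!\bigl(\ep^\infty e^{-S_{jk}/\ep}\bigr),
\]
obtaining $w_{jk} = \ep^{1+d/2} \skpd{Q_0 \Psi^\ep e^{-d^j/\ep} r_\ep b^j}{\Psi^\ep e^{-d^k/\ep} r_\ep b^k} + O(\ep^\infty e^{-S_{jk}/\ep})$, where $\Psi^\ep = r_\ep\Psi$.

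Next I would switch from $Q_0$ to the rescaled operator $\tilde Q_0$ via the identity \eqref{11aprop4}, $\ep\skpd{Q_0 u}{v}=\skpd{\tilde Q_0\tilde u_0}{\tilde v_0}$, together with the definition \eqref{8prop4} of the tilde, namely $\tilde u_0(x) = e^{-C_0 x_d^2/(2\ep)}u(x)$. This transforms the inner product into
\[
\ep^{d/2}\skpd{\tilde Q_0\, e^{-C_0(\cdot)_d^2/(2\ep)} e^{-d^j/\ep} \Psi^\ep r_\ep b^j}{\,e^{-C_0(\cdot)_d^2/(2\ep)} e^{-d^k/\ep} \Psi^\ep r_\ep b^k}.
\]

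Then I would extract the overall exponential decay by inserting $e^{-S_{jk}/\ep}e^{+S_{jk}/\ep}$ and conjugating $\tilde Q_0$ with the weight $e^{d^j/\ep+C_0(\cdot)_d^2/(2\ep)}$ on the left-hand slot. Concretely, using that the conjugation is exactly the definition \eqref{2cor1} of $\hat Q_0$,
\[
\hat Q_0 = e^{C_0(\cdot)_d^2/(2\ep)} e^{d^j/\ep}\,\tilde Q_0\,e^{-d^j/\ep} e^{-C_0(\cdot)_d^2/(2\ep)},
\]
the left slot becomes $\hat Q_0 r_\ep\Psi b^j$, while the right slot collects the weights of both $e^{-C_0 x_d^2/(2\ep)}$ factors together with $e^{-d^j/\ep}$ (from the conjugation) and the original $e^{-d^k/\ep}$, producing exactly $e^{-\varphi/\ep}\Psi b^k$ with $\varphi = d^j+d^k+C_0 x_d^2-S_{jk}$ as in \eqref{1cor1}. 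The prefactor $e^{+S_{jk}/\ep}$ cancels the $-S_{jk}$ in $\varphi$, so the right slot reads $e^{-\varphi/\ep}\Psi b^k$ up to an overall $e^{-S_{jk}/\ep}$ pulled in front of the inner product.

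The only minor points to observe are that $\Psi^\ep r_\ep b^m = r_\ep(\Psi b^m)$ since $\Psi$ is smooth and $r_\ep$ is pointwise restriction, and that since $\Psi b^m$ is smooth and compactly supported the tilde-conjugation produces no boundary issues. Assembling everything yields \eqref{0cor1}, with the $O(\ep^\infty e^{-S_{jk}/\ep})$ remainder inherited unchanged from Proposition \ref{prop6}.
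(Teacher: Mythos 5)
Your proof is correct and is exactly the algebraic rearrangement the paper has in mind (the paper explicitly says the corollary ``follows at once'' from Proposition \ref{prop6} together with \eqref{approx}, \eqref{8prop4} and \eqref{11aprop4}); the bookkeeping of the weights $e^{-d^m/\ep}$, $e^{-C_0 x_d^2/(2\ep)}$, and $e^{\pm S_{jk}/\ep}$, the conjugation yielding $\hat Q_0$, and the movement of real multiplication operators across the sesquilinear form all check out.
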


\begin{rem}\label{Remcor1}
\ben
\item Setting $\psi (x) = \frac{1}{2\ep}C_0 x_d^2 + \frac{1}{\ep} d^j (x)$, it follows from Proposition \ref{prop3app} together with 
\eqref{2cor1} and \eqref{9prop4} that 
the operator $\hat{Q}_0$ is the $0$-quantization  of a symbol 
$\hat{q}_{\psi}\in S_0^{\frac{1}{2}}(1)(\R^d\times \T^d)$, which has an asymptotic expansion, in particular
\begin{equation}\label{0remcor1}
\hat{Q}_0 = \Op_{\ep}^\T\bigl(\hat{q}_{\psi}\bigr)\, , \qquad \hat{q}_\psi (x, \xi; \ep) \sim \ep^{\frac{1}{2}} \sum_{n=0}^\infty \ep^n \hat{q}_{n,\psi}(x, \xi)\; . 
\end{equation}
Modulo $S^{\frac{3}{2}}_0(1)(\R^{d}\times \T^d)$, the symbol $\hat{q}_\psi$ is given by
\begin{equation}\label{0aremcor1}
\ep^{\frac{1}{2}} \hat{q}_{0,\psi}(x, \xi) = \sqrt{\frac{\ep C_0}{\pi}} q_{0,0} \bigl(x, x, \xi - i\nabla d^j(x) - iC_0 x_d e_d\bigr) 
\end{equation}
where $e_d$ denotes the 
unit vector in $d$-direction (see Proposition \ref{prop3}). At the intersection point or intersection manifold, 
i.e. for $y=y_0$ or $y\in G_0$ respectively,
by \eqref{8prop3} the leading order of the symbol is given by 
\begin{equation}\label{1.remcor1}
\ep^{\frac{1}{2}}\hat{q}_{0,\psi}(y,\xi)= \frac{1}{i}\sqrt{\frac{\ep C_0}{\pi}} \partial_{\xi_d} t_0 \bigl(y, \xi - i\nabla d^j (y)\bigr) = 
-\sqrt{\frac{\ep C_0}{\pi}}\sum_{\eta\in\Z^d} \tilde{a}_\eta(y)  \eta_d e^{-i\eta\cdot (\xi-i\nabla d^j (y))}
\end{equation}
where $\tilde{a}_{\eta} = a_{\ep\eta}^{(0)}$ for $\eta\in\Z^d$.
\item By Corollary \ref{cor1} we can write
\begin{equation}\label{1thm1}
 w_{jk} = \ep^{\frac{d}{2}} e^{-\frac{S_{jk}}{\ep}} \sum_{x\in\disk} e^{-\frac{\varphi(x)}{\ep}} 
\bigl( \hat{Q}_0 r_\ep \Psi b^j\bigr)(x) \bigl(\Psi b^k\bigr)(x) + O\Bigl(\ep^\infty e^{-\frac{S_{jk}}{\ep}}\Bigr)\; .
\end{equation}
\item In the setting of Hypothesis \ref{hypgeo2}, we have $\varphi|_{G_0} = 0$ and moreover, since $d^j + d^k$ is minimal on $G_0$, 
$\nabla \varphi|_{G_0} = 0$ and $\varphi (x)>0$ for 
$x\in \supp \Psi \setminus G_0$.
\een
\end{rem}

\section{Proof of Theorem \ref{wjk-expansion}}\label{section3}

A key element of the proofs of both theorems is replacing the sum on the right hand side of \eqref{1thm1} by an integral, up to a small error. 
Here we follow arguments
from \cite{giacomo}. 

In particular, in the case of just one minimal geodesic, we can use  
Corollary C.2 in \cite{giacomo}, telling us the following: Let $a\in\Ce_0^\infty (\R^n, \R)$ and 
$\psi\in \Ce^\infty (\R^n, \R)$ be such
that $\psi(x_0)=0$, $D^2\psi (x_0)>0$ and $\psi(x)>0$ for $x\in \supp a\setminus \{x_0\}$ for some $x_0\in\R^n$. 
Then there exists a sequence $(J_k)_{k\in\N}$ in $\R$ such that
\begin{equation}\label{2thm1}
\ep^{\frac{d}{2}} \sum_{x\in\disk} a(x)e^{-\frac{\psi(x)}{\ep}}  \sim \sum_{k=0}^\infty \ep^k J_k\quad
 \text{where}\quad J_0 = \frac{(2\pi)^{\frac{d}{2}} a(x_0)}{\sqrt{\det D^2\psi (x_0)}}\; .
\end{equation}
We observe that the proof of \eqref{2thm1} for $a(x)$ being independent of $\ep$ immediately generalizes to an asymptotic expansion
$a(x,\ep) \sim \sum \ep^k a_k(x)$.

In order to apply \eqref{2thm1} to the right hand side of \eqref{1thm1} we have to verify the assumptions above for 
$\psi = \varphi $ defined in \eqref{1cor1} and for some
$a\in\Ce_0^\infty$ which is equal to $\Psi b^k \bigl(\hat{Q}_0 r_\ep \Psi b^j\bigr)$ on $\disk$ and has an asymptotic expansion
in $\ep$.\\

It follows directly from its definition that $\varphi(y_0)=0$. Since $d^j (x)+ d^k (x)- S_{jk}> 0$ in $E\setminus \gamma_{jk}$ by triangle 
inequality and  
$x_d^2>0$ for all $x\in \gamma_{jk}, x\neq y_0$, it follows that $\varphi (x)>0$ for $x\in \supp \Psi \setminus \{y_0\}$.

To see the positivity of $D^2\varphi (y_0)$ we first remark that by Hypothesis \ref{hypgeo1} $d^j + d^k$, restricted to 
$\Hd_d$, has a positive Hessian at $y_0$, which we denote by
$D^2_\perp (d^j + d^k)(y_0)$. Since furthermore $d^j + d^k$ is constant along the geodesic, it
follows that the full Hessian $D^2(d^j + d^k)(y_0)$ has $d-1$ positive eigenvalues and the eigenvalue zero. The Hessian of $C_0 x_d^2$ 
at $y_0$ is diagonal and the only
non-zero element is $\partial_d^2 (C_0 x_d^2) = 2C_0>0$. Thus the Hessian $D^2 \varphi (y_0)$ is a non-negative quadratic form. 
In order to show that it is in fact positive, we
analyze its determinant. Writing the last column as  the sum $\nabla \partial_d (d^j + d^k)(y_0) + v$ where $v_k=0$ for 
$1\leq k \leq d-1$ and $v_d=2 C_0$ we 
get
\begin{align} 
\det D^2\varphi (y_0) &= \det D^2 (d^j + d^k)(y_0) + 
\det \begin{pmatrix} D^{2}_\perp(d^j + d^k)(y_0) & 0 \\ * & 2 C_0 \end{pmatrix}\nonumber\\
 &= 2 C_0 \det D^{2}_\perp(d^j + d^k)(y_0) >0 \label{6thm1}
\end{align}
where the second equality follows from the fact that one eigenvalue of $D^2(d^j + d^k)(y_0)$ is zero as discussed above and thus 
its determinant is zero.
This proves that $D^2\varphi (y_0)$ is non-degenerate and thus we get $D^2\varphi (y_0) >0$. 

By Proposition \ref{prop1app}, Remark \ref{remprop1app} and \eqref{0remcor1} the operator 
$\hat{Q}_0= \Op_{\ep}^\T(\hat{q}_\psi)$ on $\ell^2(\disk)$ (multiplied from the right by the restriction operator $r_\ep$) is equal
to the restriction of the operator $\Op_{\ep}(\hat{q}_\psi)$ on $L^2(\R^d)$. Here we consider $\hat{q}_\psi$ as periodic element of the symbol class
$S_0^{\frac{1}{2}}(1)\bigl(\R^d\times \R^d\bigr)$. In particular, for $x\in\disk$ we have 
\begin{equation}\label{13thm1}
 \Psi b^k(x) \hat{Q}_0 r_\ep \Psi b^j (x) = \Psi b^k(x) \Op_{\ep} (\hat{q}_\psi) \Psi b^j (x)
\end{equation}
where $r_\ep$ denotes the restriction to the lattice $\disk$ defined in \eqref{restrict}.
We therefore set 
\begin{equation}\label{14thm1}
a(x;\ep) := \Psi b^k(x) \Op_{\ep} \bigl(\hat{q}_\psi\bigr) \Psi b^j (x) \, , \qquad x\in\R^d\; .
\end{equation}
Then $a = \Psi b^k \bigl(\hat{Q}_0 r_\ep \Psi b^j\bigr)$ on $\disk$ and $a(.; \ep)\in\Ce_0^\infty(\R^d)$, because 
$\Psi, b^k, b^j\in \Ce_0^\infty(\R^d)$ (see e.g. \cite{dima}, which gives that $\Op_{\ep} \bigl(\hat{q}_\psi\bigr)$ maps $\mathcal{S}$ to $\mathcal{S}$).

Next we show that $a(x; \ep)$ has an asymptotic expansion in $\ep$. It suffices to show this for 
$\Op_{\ep} (\hat{q}_\psi) \Psi b^j$.

It follows from the asymptotic expansions of $\hat{q}_\psi$ and $b^j$ in \eqref{0remcor1} and \eqref{hatvm} that
\begin{align}
\Op_{\ep} (\hat{q}_\psi) \Psi b^j (x;\ep) &\sim 
\sum_{n=0}^\infty \sum_{\natop{\ell\in \Z/2}{\ell\geq -N_j}} \ep^{\frac{1}{2}+n+\ell} \Op_{\ep} (\hat{q}_{n,\psi}) \Psi b^j_\ell (x)\nonumber \\
& \sim \sum_{n=0}^\infty \sum_{\natop{\ell\in \Z/2}{\ell\geq -N_j}} \ep^{\frac{1}{2}+n+\ell} 
(2\pi\ep )^{-d} \int_{\R^{2d}} e^{\frac{i}{\ep}(y-x)\xi}
\hat{q}_{n,\psi}(x ,\xi) \Psi b^j_\ell (y) \, dy \, d\xi \nonumber\\
&\sim \sum_{n=0}^\infty \sum_{\natop{\ell\in \Z/2}{\ell\geq -N_j}}\sum_{m=0}^\infty \ep^{\frac{1}{2}+n+\ell+m} 
(2\pi)^{-d} \int_{\R^{2d}} e^{i(y-x)\zeta}
\hat{q}_{m,n,\psi}(x) \zeta^m \Psi b^j_\ell (y) \, dy \, d\zeta \label{5thm1}
\end{align}
where the last equality follows from the analyticity of $\hat{q}_\psi$ with respect to $\xi$, using the substitution $\zeta \ep = \xi$.
The functions $\hat{q}_{m,n,\psi}(x)$ are the coefficients of the expansion of $\hat{q}_{n,\psi}(x,\cdot)$ into a
convergent power series in $\xi$ at zero.

Thus we can apply 
\eqref{2thm1} to \eqref{1thm1}, which gives
\begin{equation}\label{3thm1}
 w_{jk} \sim e^{-\frac{S_{jk}}{\ep}} \sum_{k=0}^\infty \ep^k J_k 
 \end{equation}
where $J_0$ is the leading order term of 
\begin{equation}\label{4thm1}
\tilde{J}_0 = \frac{(2\pi)^{\frac{d}{2}}}{\sqrt{\det D^2\varphi (y_0)}}b^k(y_0) (\Op_{\ep}(\hat{q}_\psi) \Psi b^j)(y_0;\ep)  \; .
\end{equation}

By \eqref{1.remcor1} it follows that
\begin{equation}\label{20thm1}
\hat{q}_{0,0,\psi}(y_0) = -\sqrt{\frac{C_0}{\pi}} \sum_{\eta\in\Z^d} \tilde{a}_\eta(y_0)  \eta_d e^{-\eta\cdot \nabla d^j (y_0)} \; .
\end{equation}
Thus, by \eqref{5thm1} and Fourier inversion formula, the leading order term of $ (\Op_{\ep}(\hat{q}_\psi) \Psi b^j)(y_0;\ep)$ is given by 
\begin{multline}\label{21thm1}
  \ep^{\frac{1}{2}- N_j}\hat{q}_{0,0,\psi}(y_0) (2\pi)^{-d} \int_{\R^{2d}} e^{i(y-y_0)\zeta} \Psi b^j_{-N_j} (y) \, dy \, d\zeta \\
  = -  \ep^{\frac{1}{2}- N_j}\sqrt{\frac{C_0}{\pi}} 
\sum_{\eta\in\Z^d} \tilde{a}_\eta(y_0)  \eta_d e^{-\eta\cdot \nabla d^j (y_0)} \Psi b^j_{-N_j} (y_0)
\end{multline}

From \eqref{21thm1},\eqref{6thm1}, \eqref{4thm1} and \eqref{3thm1} it follows that $w_{jk}$ has the stated asymptotic expansion 
(where $J_0 = I_0 \ep^{\frac{1}{2}-(N_j + N_k)}$) with 
leading order 
\begin{equation}\label{23thm1}
I_0 = - \frac{(2\pi)^{\frac{d-1}{2}}}{\sqrt{\det D^{2}_\perp(d^j + d^k) (y_0)}} b^k_{-N_k}(y_0) 
 \sum_{\eta\in\Z^d} \tilde{a}_\eta(y_0) \eta_d e^{-\eta\cdot \nabla d^j (y_0)} b^j_{-N_j}(y_0) \; .
\end{equation}
Writing 
\begin{multline}\label{22thm1}
 \sum_{\eta\in\Z^d} \tilde{a}_\eta(y_0) \eta_d e^{-\eta\cdot \nabla d^j (y_0))} 
= \frac{1}{2}\sum_{\eta\in\Z^d} \bigl( \tilde{a}_{\eta}(y_0) \eta_d e^{-\eta\cdot \nabla d^j (y_0)} + \tilde{a}_{-\eta}(y_0)(-\eta_d) 
e^{\eta\cdot \nabla d^j (y_0)}\bigr) \\
= \sum_{\eta\in\Z^d} \tilde{a}_{\eta}(y_0) \eta_d \sinh \bigl(\eta\cdot \nabla d^j (y_0)\bigr)
\end{multline}
where in the last step we used $\tilde{a}_\eta(y_0) = \tilde{a}_{-\eta}(y_0)$ (see \eqref{agammasym}) and inserting \eqref{22thm1} 
into \eqref{23thm1} gives
\eqref{0thm1}.
Note that all $I_k$ are indeed real (since $w_{jk}$ is real).

\qed

\section{Proof of Theorem \ref{wjk-expansion2}}\label{section4}

{\sl Step 1:} As in the previous proof, we start proving that the sum in the formula \eqref{1thm1} for the interaction term $w_{jk}$ can, up to small error, be 
replaced by an integral. 
This can be done using the following lemma, which is proven e.g. in \cite{giacomo}, Proposition C1,
using Poisson's summation formula.

\begin{Lem}\label{LemmaC1}
For $h>0$ let $f_h$ be a smooth, compactly supported function on $\R^d$ with the property: there exists $N_0\in \N$ such that
for all $\alpha\in\N^d, |\alpha|\geq N_0$ there exists a $h$-independent constant $C_\alpha$ such that
\begin{equation}\label{1LemC1}
 \int_{\R^d} |\partial^\alpha f_h (y)|\, dy \leq C_\alpha\, .
\end{equation}
Then
\begin{equation}\label{2LemC1}
h^d \sum_{y\in h\Z^d} f_h(y) = \int_{\R^d} f_h(y) \, dy + O( h^\infty)\, , \qquad (h\to 0) \; .
\end{equation}
\end{Lem}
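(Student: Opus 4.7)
The plan is to invoke the Poisson summation formula and then exploit the hypothesis \eqref{1LemC1} to show that every non-zero Fourier mode contributes $O(h^\infty)$. Since $f_h$ is smooth and compactly supported, $g(n) := f_h(hn)$ is a Schwartz function on $\R^d$, so the standard formula $\sum_{n\in\Z^d} g(n) = \sum_{k\in\Z^d}\hat g(2\pi k)$ applies. With the convention $\hat f_h(\xi) = \int f_h(y)e^{-i\xi\cdot y}\,dy$, a change of variables gives $\hat g(\xi) = h^{-d}\hat f_h(\xi/h)$, and multiplying the Poisson identity by $h^d$ yields
\[
h^d \sum_{y\in h\Z^d} f_h(y) \;=\; \int_{\R^d} f_h(y)\,dy \;+\; \sum_{k\neq 0}\hat f_h(2\pi k/h),
\]
where the $k=0$ mode is precisely $\hat f_h(0) = \int f_h$. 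The problem is thereby reduced to estimating the remainder sum on the right.

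The key (and only) step is to use the hypothesis \eqref{1LemC1} to bound this remainder. Integration by parts in the Fourier transform gives the pointwise estimate $|\xi^\alpha\hat f_h(\xi)| \leq \|\partial^\alpha f_h\|_{L^1}$, which combined with \eqref{1LemC1} yields $|\hat f_h(\xi)|\leq C_N(1+|\xi|)^{-N}$ uniformly in $h$ for every $N\geq N_0$. Evaluating at $\xi = 2\pi k/h$ with $k\neq 0$ produces an extra factor $h^N$ in front of a sum $\sum_{k\neq 0}|k|^{-N}$ which converges for $N>d$. Choosing $N$ arbitrarily large then gives $\sum_{k\neq 0}|\hat f_h(2\pi k/h)| = O(h^N)$ for every $N$, i.e.\ $O(h^\infty)$, as claimed.

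There is no genuine obstacle in this argument; the only point that requires a moment of care is the uniformity in $h$ of the decay of $\hat f_h$, which is precisely what the $h$-independence of the constants $C_\alpha$ in \eqref{1LemC1} guarantees. The compact support of $f_h$ is used only to ensure that $\hat f_h$ is smooth (in particular defined pointwise at $2\pi k/h$) and that all $L^1$-norms of derivatives are finite; the rate of decay comes entirely from the quantitative bounds \eqref{1LemC1}.
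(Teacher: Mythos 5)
Your proof is correct and uses the same approach as the paper, which does not reproduce an argument of its own but cites \cite{giacomo}, Proposition C.1, where the lemma is proved precisely via Poisson summation as you have done. One cosmetic remark: the hypothesis only controls $\|\partial^\alpha f_h\|_{L^1}$ for $|\alpha|\geq N_0$, so strictly speaking you obtain the decay bound $|\hat f_h(\xi)|\leq C_N |\xi|^{-N}$ for $N\geq N_0$ rather than the $(1+|\xi|)^{-N}$ bound valid down to $\xi=0$; this is harmless here since you only evaluate at $\xi = 2\pi k/h$ with $k\neq 0$, where $|\xi|\geq 2\pi/h$ is large.
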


We shall verify that Lemma \ref{LemmaC1} can be used to evaluate the interaction matrix as given in \eqref{1thm1}. For $a$ given by
\eqref{14thm1}
we claim that for any $\alpha_1\in \N^d$ 
there is a constant $C_{\alpha_1}$ such that
\begin{equation}\label{1.1thm2}
\sup_{x\in\R^d} \bigl| \partial^{\alpha_1}_x a(x; \ep) \bigr| \leq  C_{\alpha_1}\ep^{\frac{1}{2}} \; .
\end{equation}
Clearly it suffices to prove 
\begin{equation}\label{14.1thm2}
\sup_{x\in\R^d} \bigl| \Psi(x) \partial^{\alpha_1}_x \Op_\ep(\hat{q}_\psi) \Psi b^j(x; \ep) \bigr| \leq  C_{\alpha_1}\ep^{\frac{1}{2}} 
\end{equation}
or, by Sobolev`s Lemma (see i.e. \cite{Folland}), for all $\beta\in\N^d$ with $|\beta|\leq \frac{d}{2}+ 1$
\begin{equation}\label{15.1thm2}
\bigl\| \Psi  \partial^{\beta + \alpha_1} \Op_\ep(\hat{q}_\psi) \Psi b^j(\,.\,; \ep) \bigr\|_{L^2} \leq  C\ep^{\frac{1}{2}} \; .
\end{equation}
Setting for $0\leq \ell \leq |\beta +\alpha_1|$  
\[ c_\ell(\xi) := \sum_{\natop{\gamma\in\N^d}{|\gamma|=\ell}} \frac{1}{\gamma!}\partial_\xi^\gamma \xi^{\beta + \alpha_1}  \quad\text{and}\quad 
\hat{q}_{\psi,\ell}(x, \xi; \ep) := \sum_{\natop{\gamma\in\N^d}{|\gamma|=\ell}} \partial_x^\gamma \hat{q}_\psi(x, \xi; \ep)\; ,
\]
we have by symbolic calculus (see e.g. \cite{martinez}, Thm.2.7.4 )
\begin{align} 
\partial^{\beta + \alpha_1} \Op_\ep(\hat{q}_\psi) &= \Bigl(\frac{i}{\ep}\Bigr)^{|\beta + \alpha_1|} \Op_\ep(c_0)  \Op_\ep(\hat{q}_\psi)\nonumber\\
& = \Bigl(\frac{i}{\ep}\Bigr)^{|\beta + \alpha_1|} \sum_{\ell=0}^{|\beta + \alpha_1|} 
\Op_\ep(\hat{q}_{\psi, \ell}) \Op_\ep(c_\ell)\Bigl(\frac{\ep}{i}\Bigr)^{\ell}\nonumber\\
&= \sum_{\ell=0}^{|\beta + \alpha_1|} \Op_\ep(\hat{q}_{\psi, \ell}) c_\ell (\partial_\xi)\label{16.1thm2}
\end{align}
where in the last step we used that $c_\ell(\xi)$ is homogeneous of degree $|\beta + \alpha_1| - \ell$.
Since $\Psi b^j$ is smooth and $\hat{q}_\psi\in S_0^\frac{1}{2}(1)\bigl(\R^{2d}\bigr)$, \eqref{15.1thm2} (and thus \eqref{1.1thm2}) 
follows from \eqref{16.1thm2} 
together with the Theorem of Calderon and Vaillancourt (see e.g. \cite{dima}).\\

Then for $\varphi$ and $a$ given by \eqref{1cor1} and \eqref{14thm1} respectively and for $h=\sqrt{\ep}$, we set $y=\frac{x}{h}$ and  
\begin{equation}\label{2.1thm2}
 f_h(y) :=  h^\ell e^{-\varphi_h (y)} A_h(y) \quad\text{where}\quad \varphi_h(y) := \frac{\varphi (hy)}{h^2} \quad \text{and}\quad 
A_h(y) := a(hy; h^2)\; .
\end{equation}
Then for $\alpha\in\N^d$
\begin{equation}\label{5.1thm2}
 \partial^\alpha f_h  =: h^\ell g_{h,\alpha} e^{-\varphi_h}
\end{equation}
where $g_{h,\alpha}$ is a sum of products, where the factors are given by
$\partial^{\alpha_1} A_h$ and $\partial^{\alpha_2}\varphi_h, \ldots , \partial^{\alpha_m}\varphi_h$ for 
partitions $\alpha_1, \ldots \alpha_m\in\N^d$ of $\alpha$, i.e. $\sum_r \alpha_r = \alpha$. 
By \eqref{1.1thm2} and \eqref{2.1thm2} we have for some $C_{\alpha_1}$ independent of $h$
\begin{equation}\label{6.1thm2}
\sup_{y\in\R^d} \bigl| \partial^{\alpha_1} A_h(y) \bigr| \leq  h^{1 + |\alpha_1|} C_{\alpha_1}\; .
\end{equation}
In order to analyze $|\partial^{\alpha_2} \varphi_h|$, we remark that
Taylor expansion at $y_0$ yields for $\beta\in \N^d$
\begin{multline}\label{7.1thm2}
\partial^{\beta} \varphi_h (y) = h^{|\beta| - 2} (\partial^{\beta}\varphi)(hy) 
=  h^{|\beta| - 2} (\partial^{\beta}\varphi)(hy_0) + 
 h^{|\beta| - 1} (\nabla \partial^{\beta}\varphi)|_{hy_0}(y- y_0)  \\
+  h^{|\beta|} \int_0^1 \frac{(1-t)^2}{2} (D^2 \partial^{\beta}\varphi)|_{h(y_0 + t(y-y_0))}[y-y_0]^2 \, dt\; .
\end{multline} 
Since for $y\in \supp A_h, y_0\in h^{-1}G_0$ the curve $t\mapsto h(y_0 + t(y-y_0))$ lies in a compact set, it follows from \eqref{7.1thm2} together with 
Remark \ref{Remcor1},(3),
that for some $C_{\beta}$ and for $N_\beta = \max \{0, |\beta|-2\}$
\begin{equation}\label{8.1thm2}
|\partial^{\beta} \varphi_h (y)| \leq C_{\beta} h^{N_{\beta}} \bigl( 1+ |y-y_0|^2\bigr) \, , \qquad y_0\in h^{-1}G_0\, , \; y\in \supp A_h\; .
\end{equation}
Thus using the above mentioned structure of $g_{h,\alpha}$ we get 
\begin{equation}\label{9.1thm2}
\bigl| g_{h, \alpha}(y)\bigr| \leq C_\alpha h \Bigl( 1+ \bigl|y - y_0\bigr|^{2 |\alpha|}\Bigr)
\end{equation}
where $C_\alpha$ is uniform for $y\in \supp A_h$ and $y_0\in h^{-1}G_0$. Taking the infimum over all $y_0$ on 
the right hand side of \eqref{9.1thm2} we get
\begin{equation}\label{9a.1thm2}
\bigl| g_{h, \alpha}(y)\bigr| \leq C_\alpha h \Bigl( 1+ \bigl( \dist (y,  h^{-1}G_0) \bigr)^{2 |\alpha|}\Bigr)
\end{equation}
Since by Hypothesis \ref{hypgeo2a} $G$ is non-degenerate at $G_0$ we have for some $C>0$
\[ \varphi (x) \geq C \dist (x, G_0)^2 \]
and therefore
\begin{equation}\label{10.1thm2}
\varphi_h (y) \geq C \frac{1}{h^2} \dist (hy, G_0)^2 = C \dist (y, h^{-1} G_0)^2\; .
\end{equation}
Combining \eqref{5.1thm2}, \eqref{9a.1thm2} and \eqref{10.1thm2} gives
\begin{align}
\int_{\R^d} \bigl| \partial^\alpha f_h (y) \bigr| \, dy &= 
  h^\ell \int_{\R^d} \bigl| g_{h, \alpha} e^{- \varphi_h (y)}\bigr|\, dy\nonumber\\
&\leq C_\alpha h^{\ell + 1}  \int_{\supp A_h} e^{- C \dist(y,h^{-1} G_0)^2}  \Bigl( 1 + \bigl(\dist(y, h^{-1} G_0)\bigr)^{2|\alpha|}\Bigr) \, dy \nonumber\\
&=C_\alpha h^{\ell + 1 - d} \int_{\supp \Psi} e^{-\frac{C}{h^2} \dist(x, G_0)^2}  \Bigl( 1 + h^{-2|\alpha|} \bigl(\dist(x, G_0)\bigr)^{2|\alpha|}\Bigr) 
\, dx  \label{11.1thm2}
\end{align}
where in the last step we used the substitution $x = h y$.

Using the Tubular Neighborhood Theorem, there is a diffeomorphism 
\begin{equation}\label{tubk} 
k: \supp \Psi \rightarrow G_0\times (-\delta, \delta)^{d-\ell} \, , \quad k(x) = (s,t)\; .
\end{equation}
Here $\delta>0$ must be chosen adapted to $\supp \Psi$, which is an arbitrary small neighborhood of $G_0$. 
Denoting by $d\sigma$ the Euclidean surface element on $G_0$, the right hand side of \eqref{11.1thm2} can thus be estimated from above by
\begin{align}
C'_\alpha h^{\ell + 1 - d}\int_{G_0\times (-\delta, \delta)^{d-\ell}} e^{-\frac{C}{h^2}t^2}  \Bigl( 1 + \Bigl(\frac{t}{h}\Bigr)^{2|\alpha|}\Bigr) 
\, d\sigma(s) \, dt \nonumber\\
\leq \tilde{C}_\alpha h \int_{\R^{d-\ell}} e^{-C \tau^2}\bigl( 1 + |\tau|^{2|\alpha|}\bigr) \, d\tau \leq \hat{C}_\alpha \label{11a.1thm2}
\end{align}
where in the last step we used that $G_0$ was assumed to be compact and the substitution $t = \tau h$.

By \eqref{11.1thm2} and \eqref{11a.1thm2} we can use Lemma \ref{LemmaC1} for $f_h$ given in \eqref{2.1thm2} and thus we have by \eqref{1thm1} together
with \eqref{13thm1} and \eqref{14thm1} 
\begin{equation}\label{12.1thm2}
w_{jk} = \ep^{-\frac{d}{2}} e^{-\frac{S_{jk}}{\ep}} \int_{\R^d} e^{-\frac{\varphi(x)}{\ep}}
\bigl(\Psi b^k\bigr)(x) \bigl(\Op_\ep(\hat{q}_\psi)\Psi b^j\bigr) (x)\, dx + O\Bigl(e^{-\frac{S_{jk}}{\ep}} \ep^\infty \Bigr)\; .
\end{equation}

{\sl Step 2:} Next we use an adapted version of stationary phase.

On $G_0$ we choose linear independent
tangent unit vector fields $E_m$, $1\leq m \leq \ell $, and linear independent normal unit vector fields 
$N_m$, $\ell+1\leq m\leq d$, where we set
$N_d = e_d$, the normal vector field on $\Hd_d$. Possibly shrinking $\supp \Psi$, the diffeomorphism $k$ given in \eqref{tubk} can be chosen such that
for each $x\in \supp \Psi$ there exists exactly one $s\in G_0$ and
$t\in (-\delta, \delta)^{d-\ell}$ such that 
\begin{equation}\label{1.2thm2}
x= s + \sum_{m=\ell+1}^d t_{m-\ell} N_m(s) \quad\text{for}\quad k(x) = (s, t) \, . 
\end{equation}
This follows from the proof of the Tubular Neighborhood Theorem, see e.g. \cite{hirsch}. It allows to continue the vector fields $N_m$ from 
$G_0$ to $\supp \Psi$ by setting $N_m(x):= N_m(s)$, thus
$N_m = \partial_{t_{m-\ell}}$. It follows that these vector fields $N_m(x)$ actually satisfy the conditions above 
Hypothesis \ref{hypgeo2a} (in particular, they commute).
We define 
\[ \tilde{\varphi}:= \varphi \circ k^{-1} : G_0\times (-\delta, \delta)^{d-\ell} \rightarrow \R \quad\text{ with }\quad
\tilde{\varphi}(s,t) := \varphi \circ k^{-1}(s,t) = \varphi (x)\; .\]

Since $\varphi(x) = d^j(x) + d^k(x) + C_0 x_d^2 - S_{jk}$ it follows from the construction above that
\begin{align}\label{2.2thm2}
\tilde{\varphi}|_{k(G_0)} &= \varphi|_{G_0} = 0 \\
 E_m \varphi|_{G_0} &= 0\,,\quad \text{for}\;1\leq m \leq \ell \nonumber\\ 
\partial_{t_m}\tilde{\varphi}|_{k(G_0)} &= N_{m+\ell} \varphi|_{G_0} = 0\, , \quad\text{for}\; 1\leq m \leq d-\ell \nonumber\\
D\varphi|_{G_0} &= 0\; .\nonumber
\end{align}

By Hypothesis \ref{hypgeo2a} the transversal Hessian of the restriction of $d^j + d^k$ to $\Hd_d$ at $G_0$ 
is positive definite, i.e.
\begin{equation}\label{1a.2thm2}
D^2_{\perp,G_0} \bigl(d^j + d^k\bigr) = \Bigl( N_m N_{m'} (d^j + d^k)|_{G_0} \Bigr)_{\ell +1\leq m, m' \leq d-1}\, >0\; .
\end{equation}
Analog to the proof of Theorem \ref{wjk-expansion} we use that $d^j+d^k$ is constant along the geodesics. Thus, for any $x_0\in G_0$, the matrix 
$\bigl( N_r N_p (d^j + d^k)(x_0)\bigr)_{\ell + 1 \leq r,p \leq d}$ has $d-\ell - 1$ positive eigenvalues and one zero eigenvalue and in particular 
its determinant is zero. 
Since 
\begin{equation}\label{5.2thm2}
 N_r N_p\varphi = \begin{cases} 
                   2 C_0 + N_d N_d (d^j + d^k) \quad\text{for}\quad (r,p)= (d,d)\\
                   N_r N_p (d^j + d^k)\quad\text{otherwise}\; ,
                  \end{cases}
\end{equation}
the Hessian $\bigl( N_m N_{m'} \varphi|_{G_0} \bigr)_{\ell+1\leq m,m' \leq d}$ of $\varphi$ restricted to $G_0$ is a non-negative quadratic form.
It is in fact positive definite since for any $x_0\in G_0$
\begin{multline}\label{6.2thm2}
 \det \Bigl( N_m N_{m'} \varphi (x_0) \Bigr)_{\ell+1\leq m,m' \leq d} \\[2mm] 
 = \det \Bigl( N_m N_{m'} (d^j + d^k) (x_0) \Bigr)_{\ell+1\leq m,m' \leq d}   
  + \det \begin{pmatrix} \Bigl( N_m N_{m'} (d^j + d^k) (x_0) \Bigr)_{\ell+1\leq m,m' \leq d-1} & 0 \\ * & 2 C_0 \end{pmatrix}\\[2mm]
 = 2 C_0 \,\det D^2_{\perp,G_0}(d^j + d^k)(x_0) > 0 \; .
\end{multline}
Thus 
\begin{equation}\label{6a.2thm2}
D_t^2 \tilde{\varphi}|_{k(G_0)}= \Bigl( N_m N_{m'} \varphi|_{G_0} \Bigr)_{\ell+1\leq m,m' \leq d} >0\; .
\end{equation}

The following lemma is an adapted version of the Morse Lemma with parameter (see e.g. Lemma 1.2.2 in \cite{Dui}). 

\begin{Lem}\label{Morse}
Let $\phi\in\Ce^\infty \bigl(G_0\times (-\delta, \delta)^{d-\ell}\bigr)$ be such that $\phi(s,0) = 0$, $D_t\phi (s,0) = 0$ and the
transversal Hessian $D^2_t\phi(s,\cdot)|_{t=0} =: Q(s)$ is non-degenerate for all $s\in G_0$. Then, for each $s\in G_0$, there is a diffeomorphism
$ y(s,.):  (-\delta, \delta)^{d-\ell} \rightarrow U$, where $U \subset\R^{d-\ell}$ is some neighborhood of $0$, such that 
\begin{equation}\label{3.2thm2} 
 y(s,t) = t + O\bigl(|t|^2\bigr) \quad\text{as}\;\; |t|\to 0\quad\text{and}\quad \phi(s,t) = \frac{1}{2}\langle y(s,t),  Q(s) y(s,t)\rangle\; . 
\end{equation}
Furthermore, $y(s,t)$ is $\Ce^\infty$ in $s\in G_0$.
\end{Lem}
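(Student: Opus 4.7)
The plan is to adapt the classical proof of the Morse lemma with parameter (cf.\ \cite{Dui}, Lemma 1.2.2) to the setting where the parameter $s$ ranges over the compact manifold $G_0$. The argument has three steps.

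First, Taylor's formula with integral remainder applied to the variable $t$ together with the hypotheses $\phi(s,0)=0$ and $D_t\phi(s,0)=0$ yields
\begin{equation*}
\phi(s,t) = \tfrac{1}{2}\skp{t}{A(s,t)\, t}, \qquad A(s,t) := 2\int_0^1 (1-u)\, D_t^2\phi(s,ut)\, du,
\end{equation*}
where $A$ is $\Ce^\infty$ in $(s,t)$, takes values in symmetric $(d-\ell)\times(d-\ell)$ matrices, and satisfies $A(s,0)=Q(s)$.

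Next, the key step is to construct a smooth matrix-valued map $B(s,t)$, defined in a neighborhood of $G_0\times\{0\}$, with $B(s,0)=I$ and $B(s,t)^T Q(s) B(s,t) = A(s,t)$. I would build $B$ via the principal matrix square root. Set $M(s,t) := Q(s)^{-1} A(s,t)$; then $M(s,0)=I$, and a direct computation using the symmetry of $A$ and $Q$ gives $Q^{-1} M^T Q = M$, i.e.\ $M$ is self-adjoint with respect to the bilinear form defined by $Q$. For $(s,t)$ sufficiently close to $G_0\times\{0\}$ the matrix $M(s,t)$ lies in a neighborhood of $I$ on which the principal square root is defined and smooth (by the convergent binomial series, or equivalently by holomorphic functional calculus), and we set $B(s,t) := M(s,t)^{1/2}$. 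Functional calculus preserves $Q$-self-adjointness, so $B^T Q = Q B$, and therefore
\begin{equation*}
B^T Q B = Q B^2 = Q M = A.
\end{equation*}
Compactness of $G_0$ yields a uniform $\delta'>0$ such that $B$ is smoothly defined on $G_0\times(-\delta',\delta')^{d-\ell}$, and smoothness of $B$ in $s$ is inherited from that of $s\mapsto Q(s)^{-1}$ and of the square-root map.

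Setting $y(s,t) := B(s,t)\, t$ then concludes the proof. Indeed,
\begin{equation*}
\skp{y(s,t)}{Q(s)\, y(s,t)} = \skp{t}{B(s,t)^T Q(s) B(s,t)\, t} = \skp{t}{A(s,t)\, t} = 2\phi(s,t),
\end{equation*}
and $B(s,0)=I$ implies $B(s,t)-I = O(|t|)$, hence $y(s,t) = t + O(|t|^2)$. The Jacobian $\partial_t y(s,0)=I$ is non-degenerate, so the inverse function theorem provides a local diffeomorphism from a neighborhood of $0$ onto its image $U\subset\R^{d-\ell}$; compactness of $G_0$ allows this neighborhood to be chosen uniformly in $s$, possibly after replacing $\delta$ by a smaller value. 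The main technical obstacle is the smooth factorization $B^T Q B = A$; the $Q$-self-adjoint square-root construction above bypasses a more cumbersome implicit-function-theorem argument (where one would restrict $B\mapsto B^T Q B$ to a slice transversal to its kernel) and automatically delivers the required uniform control over the compact parameter space $G_0$.
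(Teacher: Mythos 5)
Your proposal is correct and is essentially the paper's own approach: the paper simply cites the Morse--Palais Lemma in Lang's \emph{Real and Functional Analysis} and notes smooth $s$-dependence, and Lang's argument is precisely the Taylor-with-integral-remainder plus $Q$-self-adjoint-square-root construction you spell out in detail.
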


The proof of Lemma \ref{Morse} follows the proof of the Morse-Palais Lemma in \cite{lang}, noting that the construction depends smoothly on the 
parameter $s\in G_0$.

By \eqref{2.2thm2} and \eqref{6a.2thm2}, the phase function $\tilde{\varphi}$ satisfies the assumptions on $\phi$ given in Lemma \ref{Morse}.
We thus can define the diffeomorphism $h:= \id \times y: G_0\times (-\delta, \delta)^{d-\ell}\rightarrow G_0 \times U$ for
$y$ constructed with respect to $\tilde{\varphi}$ as in Lemma \ref{Morse}. 
Using the diffeomorphism $k: \supp \Psi\rightarrow G_0\times (-\delta, \delta)^{d-\ell}$ constructed above (see \eqref{1.2thm2}), 
we set $g(x)= h\circ k (x) = (s, y)$ (then $g^{-1}(s,0) = s$ holds for any $s\in G_0$). Thus
\begin{equation}\label{10.2thm2}
 \varphi \bigl(g^{-1}(s,y)\bigr) = \frac{1}{2} \langle  y, Q(s) y\rangle
\end{equation}
and setting $x=g^{-1}(s,y)$ we obtain
by \eqref{12.1thm2}, 
using the notation \eqref{14thm1}, 
modulo $O\bigl(e^{-\frac{S_{jk}}{\ep}} \ep^\infty \bigr)$
\begin{align}
w_{jk} & \equiv \ep^{-\frac{d}{2}} e^{-\frac{S_{jk}}{\ep}} \int_{\supp \Psi} e^{-\frac{\varphi(x)}{\ep}} a(x;\ep)\, dx \nonumber \\
 & =  \ep^{-\frac{d}{2}} e^{-\frac{S_{jk}}{\ep}} \int_{G_0}\int_{U}
e^{-\frac{1}{2\ep} \langle y, Q(s) y\rangle} a(g^{-1}(s,y); \ep)  J(s,y) \, dy \, d\sigma(s)  \label{4.2thm2}
\end{align}
where $d\sigma$ is the Euclidean surface element on $G_0$ and $J(s,y)= \det D_y g^{-1}(s, .)$ denotes the Jacobi determinant  for the
diffeomorphism 
\[ g^{-1}(s, .): U \rightarrow \Span \bigl(N_{\ell+1}(s), \ldots, N_d(s)\bigr) \] 
and $Q(s)=D^2_t\tilde{\varphi}(s,\cdot)|_{t=0}$ denotes the transversal Hessian of $\tilde{\varphi}$ 
as given in \eqref{6a.2thm2}.  From the construction of $g$ and \eqref{1.2thm2} it follows that $J(s,0) = 1$ for all $s\in G_0$.

By the stationary phase formula with respect to $y$ in \eqref{4.2thm2}, we get modulo $O\bigl(e^{-\frac{S_{jk}}{\ep}} \ep^\infty \bigr)$
\begin{align}
 w_{jk} &= \ep^{-\frac{d}{2}} e^{-\frac{S_{jk}}{\ep}} \bigl(\ep 2 \pi\bigr)^{\frac{d-\ell}{2}} 
\int_{G_0} \bigl(\det Q(s)\bigr)^{-\frac{1}{2}}
\sum_{\nu=0}^\infty \frac{\ep^\nu}{\nu !}\Bigl( \langle \partial_y, Q^{-1}(s) \partial_y\rangle^\nu \tilde{a} J\Bigr)(s,0; \ep)\, d\sigma(s) \nonumber\\
&=  \ep^{-\frac{\ell}{2}} e^{-\frac{S_{jk}}{\ep}}\bigl(2 \pi\bigr)^{\frac{d-\ell}{2}}\sum_{\nu=0}^\infty
\ep^\nu  \int_{G_0}B_{\nu} (s)\, d\sigma(s)\label{9.2thm2}
\end{align}
where $\tilde{a}(.; \ep) := a(.; \ep)\circ g^{-1}$ and, for any $s\in G_0$, $B_{0}(s)$ is given by the leading order of
\begin{equation}
 \Bigl(\det Q(s)\Bigr)^{-\frac{1}{2}} a(g^{-1}(s,0); \ep)\bigr) 
=  \Bigl|2C_0 \det D^2_{\perp,G_0}\bigl(d^j + d^k\bigr)(s)\Bigr|^{-\frac{1}{2}} a(s; \ep)\, ,\qquad \label{7.2thm2}
\end{equation}
using \eqref{6.2thm2}, \eqref{6a.2thm2} and identifying $s\in G_0$ with a point in $\R^d_x$.

We now use the definition of $a$ in \eqref{14thm1}, the expansion \eqref{5thm1} of $\Op_{\ep,0}(\hat{q}_\psi) \Psi b^j$ and the fact that
\eqref{20thm1} and \eqref{21thm1} also hold for any $y_0\in G_0$ in the setting of Hypothesis \ref{hypgeo2} to get for $s\in G_0$ 
\begin{equation}\label{8.2thm2}
 B_{0} (s) 
=\sqrt{\frac{\ep}{2\pi}} \Bigl|\det D^2_{\perp, G_0}\bigl(d^j + d^k\bigr)(s)\Bigr|^{-\frac{1}{2}} 
\ep^{-(N_j + N_k)} b^k_{-N_k}(s) \sum_{\eta\in\Z^d} \tilde{a}_\eta(s)
\eta e^{-\eta\cdot \nabla d^j (s)} b^j_{-N_j} (s) \; .
\end{equation}
Combining \eqref{8.2thm2} and \eqref{9.2thm2} and using \eqref{22thm1} completes the proof.

\qed

\section{Some more results for $w_{jk}$}\label{section5}

In this section, we derive some formulae and estimates for the interaction term $w_{jk}$ and its leading order term, assuming only Hypotheses \ref{hyp1} to \ref{hypkj}, i.e. without any assumptions on the  
geodesics between the potential minima $x^j$ and $x^k$.

We combine the fact that the relevant jumps in the interaction term are those taking place in a small neighborhood of 
$\Hd_d\cap E$, proven in \cite{kleinro4}, Proposition 1.7, with the results on approximate
eigenfunctions proven in \cite{kleinro5}.

\begin{prop}\label{wjkasymp}
Assume that Hypotheses \ref{hyp1} to \ref{hypkj} hold and let $\hat{v}_m^\ep,\, m=j,k,$ denote the 
approximate eigenfunctions given in \eqref{hatvm}. For $\delta>0$, we set 
\begin{equation}\label{deltagammac}
\delta\Gamma := \delta\Hd_{d,R}\cap E\, , \quad \widehat{\delta\Gamma} := \delta \Gamma \cap \Hd_{d,R}
\quad\text{and}\quad
\widehat{\delta\Gamma}^{c} := \delta \Gamma \cap
\Hd_{d,R}^{c}
\end{equation}
where $\delta \Hd_{d,R}$ is defined in \eqref{deltaA}.
Then the interaction term is given by
\begin{equation}\label{0prop51}
w_{jk} =
 \skpd{ \hat{v}_j^\ep}{ \id_{\widehat{\delta\Gamma}}T_\ep \id_{\widehat{\delta\Gamma}^c}
 \hat{v}_k^\ep}
- \skpd{\id_{\widehat{\delta\Gamma}}T_\ep \id_{\widehat{\delta\Gamma}^c}
\hat{v}_j^\ep}{\hat{v}_k^\ep}
+ O\Bigl(\ep^\infty e^{-\frac{S_{jk}}{\ep}}\Bigr)\; .
\end{equation}
Moreover, setting
\begin{equation}\label{tdelta}
\tilde{t}^\delta (x, \xi) := - \sum_{\gamma\in\disk} \id_{\widehat{\delta\Gamma}^c}(x+\gamma) a^{(0)}_\gamma (x) 
\cosh \frac{\gamma\cdot \xi}{\ep} \; , 
\end{equation}
the leading order of $w_{jk}$ is can be written as
\begin{equation}\label{theowjk1}
\sum_{x\in \widehat{\delta\Gamma}_\ep} \hat{v}^\ep_j(x)
\hat{v}^\ep_k(x) \left( \tilde{t}^\delta (x,\nabla d^j(x)) -
\tilde{t}^\delta (x,\nabla d^k(x))\right)  \; .
\end{equation}
If $\hat{v}^\ep_j$ and $\hat{v}^\ep_k$ are both strictly positive in
$\widehat{\delta\Gamma}_\ep$, we have modulo $O\left(\ep^\infty e^{-\frac{S_{jk}}{\ep}}\right)$
\begin{multline}\label{theowjk2}
\sum_{x\in \widehat{\delta\Gamma}_\ep} \hat{v}^\ep_j(x) \hat{v}^\ep_k(x)
\nabla_\xi \tilde{t}^\delta (x,\nabla d^k(x))(\nabla d^j(x) - \nabla d^k(x)) \\
\leq w_{jk} \leq \sum_{x\in \widehat{\delta\Gamma}_\ep} \hat{v}^\ep_j(x)
\hat{v}^\ep_k(x) \nabla_\xi \tilde{t}^\delta (x,\nabla d^j(x))(\nabla
d^j(x) - \nabla d^k(x))\; .
\end{multline}

\end{prop}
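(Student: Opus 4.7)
The overall strategy is to express $w_{jk}$ as a localized commutator across the hyperplane $\Hd_d$, replace the exact Dirichlet eigenfunctions by their WKB approximations, and then apply pointwise convexity of the resulting symbol. I would start from \eqref{wjkaltglg} in Remark~\ref{wjkalt} and apply \cite{kleinro4}, Lemma~5.1, to localize $[T_\ep,\id_{\Hd_{d,R}}]$ to the $\delta$-neighborhood of $\partial\Hd_{d,R}$ up to an $\expord{C}$-error for arbitrary $C>0$; together with the cut-off $\id_E$ this confines the expression to $\delta\Gamma$. Splitting $\id_{\delta\Gamma}=\id_{\widehat{\delta\Gamma}}+\id_{\widehat{\delta\Gamma}^c}$ and using the self-adjointness of $T_\ep$ in the form $\skpd{\id_{\widehat{\delta\Gamma}}T_\ep\id_{\widehat{\delta\Gamma}^c}u}{v}=\skpd{u}{\id_{\widehat{\delta\Gamma}^c}T_\ep\id_{\widehat{\delta\Gamma}}v}$ (so that the diagonal pieces $\id_{\widehat{\delta\Gamma}}T_\ep\id_{\widehat{\delta\Gamma}}$ and $\id_{\widehat{\delta\Gamma}^c}T_\ep\id_{\widehat{\delta\Gamma}^c}$ cancel), I recover the off-diagonal structure of \eqref{0prop51}. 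Replacing $\id_E v_j,\id_E v_k$ by $\hat v_j^\ep,\hat v_k^\ep$ proceeds exactly as in Proposition~\ref{prop6}: the integrand is supported in $\delta\Gamma\subset \stackrel{\circ}{M}_{j}\cup\stackrel{\circ}{M}_{k}$, on which $d^j+d^k\geq S_{jk}$, so the weighted $\ell^2$-estimate \eqref{approxl2} yields an error of size $O(\ep^\infty e^{-S_{jk}/\ep})$ and establishes \eqref{0prop51}.

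For the leading-order representation \eqref{theowjk1}, I would insert the WKB ansatz $\hat v_m^\ep(x)=\ep^{d/4}e^{-d^m(x)/\ep}b^m(x;\ep)$ into \eqref{0prop51} and use the exponential decay \eqref{agammasum} to restrict to lattice displacements $|\gamma|=O(\ep)$. On this scale a Taylor expansion of the smooth phase $d^m$ and amplitude $b^m$ gives
\[ \hat v_m^\ep(x+\gamma)=\hat v_m^\ep(x)\,e^{-\gamma\cdot\nabla d^m(x)/\ep}\bigl(1+O(\ep)\bigr), \]
while $a_\gamma(x;\ep)=a_\gamma^{(0)}(x)+O(\ep)$. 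The resulting leading-order sum reads
\[ \sum_{x\in\widehat{\delta\Gamma}_\ep}\hat v_j^\ep(x)\hat v_k^\ep(x)\sum_\gamma a_\gamma^{(0)}(x)\,\id_{\widehat{\delta\Gamma}^c}(x+\gamma)\bigl[e^{-\gamma\cdot\nabla d^k(x)/\ep}-e^{-\gamma\cdot\nabla d^j(x)/\ep}\bigr]. \]
To bring this into the $\cosh$-form defining $\tilde t^\delta$ in \eqref{tdelta}, I would symmetrize using the momentum reversibility $a_\gamma^{(0)}(x)=a_{-\gamma}^{(0)}(x)$ from \eqref{agammasym}: averaging the displayed sum with its image under $\gamma\mapsto-\gamma$ converts each exponential $e^{-\gamma\cdot\nabla d^m/\ep}$ into $\cosh(\gamma\cdot\nabla d^m/\ep)$ by combining the contributions from the two sides of $\Hd_d$, and the remaining antisymmetric $\sinh$-pieces are absorbed into the error using the geometric reversibility $\nabla d^k(y)=-\nabla d^j(y)$ at geodesic crossings of $\Hd_d$ (a consequence of $\tilde h_0$ being even in $\xi$).

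Finally, the sandwich bound \eqref{theowjk2} follows immediately from convexity. By Hypothesis~\ref{hyp1}(a)(ii) we have $-a_\gamma^{(0)}(x)\geq 0$ for $\gamma\neq 0$, so $\xi\mapsto\tilde t^\delta(x,\xi)$ is a nonnegative linear combination of the convex functions $\cosh(\gamma\cdot\xi/\ep)$ and is therefore itself convex in $\xi$. The elementary two-sided tangent inequality
\[ \nabla_\xi\tilde t^\delta(x,b)\cdot(a-b)\;\leq\;\tilde t^\delta(x,a)-\tilde t^\delta(x,b)\;\leq\;\nabla_\xi\tilde t^\delta(x,a)\cdot(a-b), \]
applied with $a=\nabla d^j(x)$ and $b=\nabla d^k(x)$ and multiplied by the nonnegative weight $\hat v_j^\ep(x)\hat v_k^\ep(x)$, yields \eqref{theowjk2} termwise from \eqref{theowjk1}. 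The main obstacle in the whole argument is the symmetrization step producing the $\cosh$-form: the indicator $\id_{\widehat{\delta\Gamma}^c}(x+\gamma)$ breaks the $\gamma\mapsto-\gamma$ symmetry exploited in \eqref{agammasym}, and one must combine it with the geometric reversibility of the Finsler metric along the minimal geodesics to see that the antisymmetric $\sinh$-correction contributes only to the $O(\ep^\infty e^{-S_{jk}/\ep})$ remainder; once this is done, the bookkeeping of the error terms through the commutator localization and the WKB substitution is routine.
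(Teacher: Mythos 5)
Your proof of \eqref{0prop51} is essentially the paper's: the localization of $[T_\ep,\id_{\Hd_{d,R}}]$ to the strip $\delta\Gamma$ (via \cite{kleinro4}, Lemma 5.1), the use of symmetry of $T_\ep$ to cancel the diagonal pieces and extract the off-diagonal term, and the subsequent replacement of $v_m$ by $\hat v_m^\ep$ using the weighted estimate \eqref{approxl2} all match the paper's argument, up to the fact that the paper quotes the intermediate identity $w_{jk}=\skpd{v_j}{A v_k}+\expord{S_0+a-\eta}$ directly rather than rederiving it from \eqref{wjkaltglg}.

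The symmetrization step leading to \eqref{theowjk1}, however, contains a genuine error, and it is precisely the one you flag as the main obstacle. Your proposal is to dispose of the $\sinh$-defect using $\nabla d^k(y)=-\nabla d^j(y)$ on the minimal geodesic. But $\sinh$ is \emph{odd}: under $\nabla d^k=-\nabla d^j$ the two $\sinh$-contributions \emph{add}, $\sinh(\gamma\cdot\nabla d^j/\ep)-\sinh(\gamma\cdot\nabla d^k/\ep)=2\sinh(\gamma\cdot\nabla d^j/\ep)$, while the $\cosh$-contributions (being even) \emph{cancel}: $\tilde t^\delta(y,\nabla d^j(y))-\tilde t^\delta(y,\nabla d^k(y))=0$ whenever $\nabla d^j(y)=-\nabla d^k(y)$. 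So on the set where the weight $\hat v_j^\ep\hat v_k^\ep\sim e^{-(d^j+d^k)/\ep}$ is largest — i.e.\ near the geodesic crossings, where $d^j+d^k$ is minimal and hence $\nabla(d^j+d^k)=0$ — the expression you want to prove equals the leading order vanishes, while the $\sinh$-piece you want to discard is $O(1)$. This is consistent with \eqref{0thm1} in Theorem \ref{wjk-expansion}, whose leading term $I_0$ is built from $\sinh(\eta\cdot\nabla d^j(y_0))$ (not from a $\cosh$), confirming that the $\sinh$-contribution is the dominant one and cannot be absorbed into the error. A further obstruction: the factor $\id_{\widehat{\delta\Gamma}^c}(x+\gamma)$ forces $\gamma_d>-x_d>0$, so the $\gamma\mapsto-\gamma$ replacement lands on the disjoint set $\gamma_d<0$; there is no self-averaging of the displayed sum under this map, only a swap to a different sum, and the identity $a_\gamma^{(0)}=a_{-\gamma}^{(0)}$ from \eqref{agammasym} does not cure this. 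In short, the Taylor expansion correctly produces $e^{-\gamma\cdot\nabla d^m(x)/\ep}$, but the passage from this to the even function $\cosh(\gamma\cdot\nabla d^m/\ep)$ is not justified by the argument you give, and since \eqref{theowjk2} is deduced from \eqref{theowjk1} by the tangent-line inequality for the convex function $\xi\mapsto\tilde t^\delta(x,\xi)$, the gap propagates to the final two-sided bound as well.
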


We remark that the translation operator $\id_{\widehat{\delta\Gamma}}T_\ep \id_{\widehat{\delta\Gamma}^c}$ is non-zero only for translations 
mapping points $x\in E$ with $0\leq x_d\leq \delta$ to points $x+\gamma\in E$ with $-\delta \leq x+\gamma < 0$. Thus each translation crosses
the hyperplane $\Hd_d$ from right to left. 

\begin{proof}
Since by Hypothesis \ref{hypkj} each of the two wells has
exactly one eigenvalue within the spectral interval $I_\ep$, we
have $\hat{v}_j^\ep:= \tilde{v}_{j,1}^\ep = \hat{v}_{j,1}^\ep$ in the setting of \cite{kleinro5}, Theorem 1.8.
Setting
\begin{equation}\label{defA1} 
A:= \id_{\widehat{\delta\Gamma}}T_\ep \id_{\widehat{\delta\Gamma}^c} - \id_{\widehat{\delta\Gamma}^c}T_\ep \id_{\widehat{\delta\Gamma}}\; ,
\end{equation}
we have by \cite{kleinro5}, Proposition 1.7,
\begin{eqnarray}
\left| w_{jk} -  \skpd{\hat{v}^\ep_j}{A \hat{v}^\ep_k} \right| &=&
\left| \skpd{v_j}{A v_k} -  \skpd{\hat{v}^\ep_j}{A \hat{v}^\ep_k} \right|  +
\expord{-(S_0 + a - \delta)}\nonumber \\
&\leq& \left| \skpd{v_j-\hat{v}^\ep_j}{A v_k}\right| +
 \left|\skpd{\hat{v}^\ep_j}{A (v_k- \hat{v}^\ep_k)} \right| + \expord{-(S_0 + a - \delta)}\; .
 \label{wjkminusbeide}
\end{eqnarray}
From \eqref{defA1} and the triangle inequality for the Finsler distance $d$ it follows that
\begin{multline*}
\left| \skpd{v_j-\hat{v}^\ep_j}{A v_k}\right| =
\Bigl|\sum_{x\in\disk}\sum_{\gamma\in\disk}
\left[\id_{\widehat{\delta\Gamma}}(x)\id_{\widehat{\delta\Gamma}^c}(x+\gamma) - 
\id_{\widehat{\delta\Gamma}^c}(x)\id_{\widehat{\delta\Gamma}}(x+\gamma) \right]\times\\
\times\, e^{\frac{d^j(x)}{\ep}}e^{-\frac{d^j(x)}{\ep}}
\left(v_j(x) - \hat{v}^\ep_j(x)\right) a_\gamma(x) e^{\frac{d^k(x)}{\ep}}e^{-\frac{d^k(x)}{\ep}} v_k(x+\gamma)\Bigr| \\
\leq e^{-\frac{d(x_j, x_k)}{\ep}} \left\| e^{\frac{d^j}{\ep}}(v_j - \hat{v}^\ep_j)
\right\|_{\ell^2(\delta\Gamma)}
\left\| e^{\frac{d^k}{\ep}}v_k
\right\|_{\ell^2(\delta\Gamma)}
\sum_{|\gamma|<B} \left\| a_\gamma e^{\frac{d(.,.+\gamma)}{\ep}}
\right\|_{\ell^\infty(\delta\Gamma)}\; .
\end{multline*}
In the last step we used that for some $B>0$
we have $|\gamma|<B$ if $x\in \widehat{\delta\Gamma}$ and
$x+\gamma\in\widehat{\delta\Gamma}^c$ and vice versa.
Therefore by \cite{kleinro5}, Theorem 1.8, Proposition 3.1 and by
\eqref{agammasupnorm2} we have
\begin{equation}\label{vjminusujinw}
\left| \skpd{v_j-\hat{v}^\ep_j}{A v_k}\right| =
O\left(e^{-\frac{S_{jk}}{\ep}} \ep^\infty\right)\; .
\end{equation}
The second summand on the right hand side of \eqref{wjkminusbeide}
can be estimated similarly. This proves \eqref{0prop51}.\\ 

For the next step, we remark that by Hypothesis \ref{hyp1}, as a function on the cotangent bundle
$T^*\delta \Gamma$,  the symbol $\tilde{t}^\delta$ is
hyperregular (see \cite{kleinro}).

Setting $\tilde{b}^\ell:= b^\ell_{-N_\ell}$ for $\ell\in\{j,k\}$, \eqref{0prop51} leads to
\begin{multline}\label{wjk11}
w_{jk} \equiv \sum_{x\in \widehat{\delta\Gamma}_\ep}\sum_{\natop{\gamma\in\disk}{x+\gamma \in \widehat{\delta\Gamma}_\ep^c}} a^{(0)}_\gamma(x)
\ep^{\frac{d}{2}-N_j - N_k} \left( \tilde{b}^j(x) e^{-\frac{d^{j}(x)}{\ep}} \tilde{b}^k(x+\gamma) e^{-\frac{d^{k}(x+\gamma)}{\ep}}\right.\\
 \left. - \tilde{b}^j(x+\gamma) e^{-\frac{d^{j}(x+\gamma)}{\ep}}
\tilde{b}^k(x) e^{-\frac{d^{k}(x)}{\ep}}\right)\; .
\end{multline}
We split the sum over $\gamma$ in the parts $A_1(x)$ with $|\gamma|\leq 1$ and $A_2(x)$ with $|\gamma|>1$. 
Then it follows at once from \eqref{agammasum} 
that for any $B>0$ and some $C>0$
\begin{equation}\label{wjk17}
 \Big| \sum_{x\in \widehat{\delta\Gamma}_\ep}A_2(x)\Bigr| \leq C e^{-\frac{B}{\ep}}\; .
\end{equation}
To analyze $A_1(x)$, we use Taylor expansion at $x$,  yielding for $\ell=j,k$
\begin{equation}\label{wjk15}
\sum_{\natop{\gamma \in \disk}{|\gamma|\leq 1}}  \id_{\widehat{\delta\Gamma}^c}(x+\gamma) a^{(0)}_\gamma(x)  \tilde{b}^\ell (x+\gamma)
e^{-\frac{d^{\ell}(x+\gamma)}{\ep}} = - \tilde{b}^\ell(x)
e^{-\frac{1}{\ep}d^\ell(x)} \tilde{t}^\delta(x, \nabla d^\ell(x)) + R_1(x)
\end{equation}
where, using the notation $\gamma = \ep \eta$ for $\eta\in\Z^d$ and $\tilde{a}_\eta = a^{(0)}_{\ep\eta}$, the remainder $R_1(x)$ can for
some $C>0$ and any $B>0$ be estimated by
\begin{align}\label{wjk16}
\bigl| R_1(x) \bigr| &= e^{-\frac{d^{\ell}(x)}{\ep}} \Bigl| \sum_{\natop{\eta \in \Z^d}{|\eta|\leq \frac{1}{\ep}}} 
\id_{\widehat{\delta\Gamma}^c}(x+\ep\eta) \ep \eta\cdot \nabla  \tilde{b}^\ell (x) e^{\eta\nabla d^\ell(x)} \tilde{a}_\eta(x)  (1 + O(1))\Bigr| \\
&\leq \ep C \sum_{\eta\in\Z^d} |\eta| e^{-B|\eta|} \leq \ep C \int_{\R^d}  |\eta| e^{-B|\eta|}\, d\eta \leq C \ep \; .
\end{align}
Inserting \eqref{wjk17}, \eqref{wjk15} and \eqref{wjk16} into \eqref{wjk11} yields 
\[
w_{jk} \equiv \sum_{x\in\widehat{\delta\Gamma}_\ep}
\ep^{\frac{d}{2}-N_j-N_k} \tilde{b}^j(x)\tilde{b}^k (x) e^{-\frac{1}{\ep}(d^j(x) +
d^k(x))} \left( \tilde{t}^\delta(x, \nabla d^j(x)) -
\tilde{t}^\delta(x, \nabla d^k(x))\right) + O(\ep)\, .
\]
and thus proves \eqref{theowjk1}.

To show \eqref{theowjk2}, we use that for any convex function $f$
on $\R^d$
\[ \nabla f(\eta)(\xi -\eta) \leq f(\xi) - f(\eta) \leq \nabla f(\xi) (\xi - \eta)\, ,
\qquad \eta,\xi\in\R^d\; .\]
Thus for $\hat{v}^\ep_j$ and $\hat{v}^\ep_k$ both strictly positive in
$\widehat{\delta\Gamma}$,
\eqref{theowjk2} follows from the convexity of $\tilde{t}^\delta$.

\end{proof}

\begin{appendix}

\section{Pseudo-Differential operators in the discrete setting}\label{app1}

We introduce and analyze pseudo-differential operators associated to symbols, which are $2\pi$-periodic with respect to $\xi$ 
(for former results see also \cite{kleinro2}).

Let $\T^d := \R^d/(2\pi)\Z^d$ denote the $d$-dimensional torus and without further mentioning we identify functions 
on $\T^d$ with $2\pi$-periodic functions on $\R^d$.

\begin{Def}\label{pseudo}
\ben
\item An order function on $\R^N$ is a function $m:\R^{N} \ra (0, \infty)$ such that there exist $C>0, M\in \N$ such that
\[ m(z_1) \leq C \langle z_1-z_2\rangle^M m(z_2) \, , \qquad z_1, z_2\in \R^{N} \]
where $\langle x \rangle := \sqrt{1+|x|^2}$.
\item A function $p\in \Ce^\infty \bigl(\R^{N}\times (0, 1]\bigr)$ is an element of the symbol class 
$S_\delta^k\bigl(m\bigr)\bigl(\R^{N}\bigr)$ for some order function $m$ on $\R^N$, if for all $\alpha\in \N^{N}$ 
there is a constant $C_\alpha >0$ such that
\[ \Bigl| \partial^\alpha p (z; \ep)\Bigr| \leq C_\alpha \ep^{k-\delta |\alpha|} m(z)\, , \qquad z\in\R^N \]
uniformly for $\ep\in (0,1]$. 
On $S_\delta^k\bigl(m\bigr)\bigl(\R^{N}\bigr)$ we define the Fr\'echet-seminorms 
\begin{equation}\label{Frechet-Norm}
\|p\|_{\alpha} := \sup_{z\in\R^N, 0<\ep\leq 1}\frac{\Bigl| \partial^\alpha p (z; \ep)\Bigr|}{\ep^{k-\delta |\alpha|} m(z)}\, , \quad \alpha \in\N^N\; .
\end{equation}
We define the symbol class $S_\delta^k\bigl(m\bigr)\bigl(\R^{N}\times \T^d\bigr)$ by identification of 
$\Ce^\infty (\T^d)$ with the $2\pi$-periodic functions in $\Ce^\infty (\R^d)$. 
\item 
To $p\in S_\delta^k\bigl(m\bigr)\bigl(\R^{2d}\times \T^d\bigr)$ we associate a pseudo-differential operator 
$\widetilde{\Op}_\ep^{\T}(p): {\mathcal K}\left(\disk\right) \longrightarrow
{\mathcal K}'\left(\disk\right)$ setting
\begin{equation}\label{psdo3dTorus}
\widetilde{\Op}_\ep^{\T}(p)\, v(x; \ep) := (2\pi)^{-d} \sum_{y\in\disk}\int_{[-\pi,\pi]^d}
e^{\frac{i}{\ep}(y-x)\xi}
p(x, y ,\xi;\ep)v(y) \, d\xi 
\end{equation}
where
\begin{equation}\label{kompaktge}
{\mathcal K}\left(\disk\right):=\{ u: \disk\rightarrow \C\; |\; u~\mbox{has compact support}\}
\end{equation}
and ${\mathcal K}'\left(\disk\right):= \{f: \disk\rightarrow \C\ \} $ is dual to
${\mathcal K}\left(\disk\right)$
by use of the scalar product $\skpd{u}{v}:= \sum_x \bar{u}(x)v(x)$ .
\item 
For $t\in [0,1]$ and $q\in S_\delta^k\bigl(m\bigr)\bigl(\R^{d}\times \T^d\bigr)$ the associated 
pseudo-differential operator $\Op_{\ep,t}^\T (q)$ is defined by
\begin{equation}\label{psdo2dTorus}
\Op_{\ep,t}^{\T}(q)\, v(x; \ep) := (2\pi)^{-d} \sum_{y\in\disk}\int_{[-\pi,\pi]^d}
e^{\frac{i}{\ep}(y-x)\xi}
q((1-t)x + ty , \xi; \ep) v(y) \, d\xi
\end{equation}
for any $v\in\mathcal{K}(\disk)$ and we set $\Op_{\ep, 0}^\T (q) =: \Op_\ep^\T(q)$.
\item 
To $p\in S_\delta^k\bigl(m\bigr)\bigl(\R^{3d}\bigr)$ we associate a pseudo-differential operator 
$\widetilde{\Op}_\ep (p): \Ce_0^\infty (\R^d) \longrightarrow \mathcal{D}'(\R^d)$ setting
\begin{equation}\label{psdo3d}
\widetilde{\Op}_\ep (p)\, v(x; \ep) := (2\pi\ep )^{-d} \int_{\R^{2d}}
e^{\frac{i}{\ep}(y-x)\xi}
p(x, y ,\xi;\ep)v(y) \, dy \, d\xi \; .
\end{equation}
\item 
For $t\in [0,1]$ and $q\in S_\delta^k\bigl(m\bigr)\bigl(\R^{2d}\bigr)$ the associated a pseudo-differential 
operator $\Op_{\ep,t} (q)$ is defined by
\begin{equation}\label{psdo2dt}
\Op_{\ep,t} (q)\, v(x; \ep) := (2\pi\ep )^{-d} \int_{\R^{2d}}
e^{\frac{i}{\ep}(y-x)\xi}
q((1-t)x + ty , \xi; \ep) v(y) \, dy \, d\xi \, , \qquad v\in\Ce_0^\infty{\R^d}
\end{equation}
and we set $\Op_{\ep,0}(q) =: \Op_\ep(q)$.
\een
\end{Def}

Standard arguments show that $\widetilde{\Op}_\ep(p)$ actually maps $\Ce_0^\infty(\R^d)$ into $\Ce^\infty(\R^d)$. 
Moreover, the seminorms given in \eqref{Frechet-Norm} induce the
structure of a Fr\'echet-space in $S^k_\delta(m)(\R^N)$.

In \cite{kleinro2} we discussed properties of pseudo-differential operators $\Op_\ep^\T(.)$.
In particular we showed that, 
for a symbol $q\in S_\delta^k\bigl(m\bigr)\bigl(\R^{2d}\bigr)$ which is $2\pi$-periodic with respect to $\xi$, 
the restriction of $\Op_\ep (q)$ to $\mathcal{K}\bigl(\disk\bigr)$ coincides with $\Op_\ep^\T (q)$. 

In the next proposition we show that this statement also holds in the more general case of $\widetilde{\Op}_\ep^\T$ and
$\widetilde{\Op}_\ep$.

\begin{prop}\label{prop1app}
For some order function $m$ on $\R^{3d}$, let $p\in S^k_\delta \bigl(m\bigr)\bigl(\R^{3d}\bigr)$ satisfy 
$p(x, y, \xi; \ep) = p(x, y, \xi + 2\pi\eta; \ep)$ for any $\eta\in\Z^d, \xi, x, y\in \R^d$ 
and $\ep\in (0,1]$. Then $p\in S^k_\delta \bigl(m\bigr)\bigl(\R^{2d}\times \T^d\bigr)$ and using the restriction map
\begin{equation}\label{restrict} 
r_\ep: \Ce_0^\infty(\R^d) \ra \mathcal{K}(\disk)\, , \quad r_\ep(u) = u|_{\disk} 
\end{equation}
we have
\begin{equation}\label{0prop1app}
r_\ep \circ \widetilde{\Op}_{\ep} (p)\, u = \widetilde{\Op}_\ep^\T (p) r_\ep\circ u \, , \qquad u\in \Ce_0^\infty(\R^d)\; .
\end{equation}
\end{prop}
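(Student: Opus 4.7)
The first claim, that $p\in S^k_\delta(m)(\R^{2d}\times\T^d)$, is immediate from Definition \ref{pseudo}(2): the stated $2\pi$-periodicity in $\xi$ lets one identify $p$ with a symbol on $\R^{2d}\times\T^d$, and the symbol bounds on $\R^{3d}$ descend verbatim to one fundamental domain. For the substantive identity \eqref{0prop1app}, my plan is to combine the periodicity of $p$ in $\xi$ with the Poisson summation formula, thereby converting the $\R^d$-valued $y$- and $\xi$-integrations in \eqref{psdo3d} into the lattice sum over $y\in\disk$ and the integration over $[-\pi,\pi]^d$ appearing in \eqref{psdo3dTorus}.

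Concretely, I will fix $x\in\disk$ and $u\in\Ce_0^\infty(\R^d)$, decompose
\[
\R^d_\xi=\bigsqcup_{\eta\in\Z^d}\bigl([-\pi,\pi]^d+2\pi\eta\bigr)
\]
in the $\xi$-integral of \eqref{psdo3d}, and translate each summand back to $[-\pi,\pi]^d$. By the periodicity of $p(x,y,\cdot;\ep)$ only the oscillatory phase carries the shift, yielding formally
\[
\widetilde{\Op}_\ep(p)u(x)=(2\pi\ep)^{-d}\int_{\R^d}\int_{[-\pi,\pi]^d}e^{\frac{i}{\ep}(y-x)\xi}p(x,y,\xi;\ep)u(y)\Bigl(\sum_{\eta\in\Z^d}e^{\frac{2\pi i}{\ep}(y-x)\eta}\Bigr)d\xi\,dy\, .
\]
I then invoke the Poisson summation formula, which in the sense of tempered distributions in $(y-x)$ reads
\[
\sum_{\eta\in\Z^d}e^{\frac{2\pi i}{\ep}(y-x)\eta}=\ep^d\sum_{\gamma\in\disk}\delta(y-x-\gamma)\, ,
\]
and evaluate the $y$-integral against these Dirac masses. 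Since $x\in\disk$, the translates $x+\gamma$ exhaust $\disk$ as $\gamma$ varies, and $r_\ep u(x+\gamma)=u(x+\gamma)$; this produces exactly the right-hand side of \eqref{psdo3dTorus}.

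The hard part will be that the $\xi$-integral in \eqref{psdo3d} is only conditionally convergent (as $p$ obeys symbol bounds rather than integrability) and the interchanges of summation and integration used above are not a priori justified. To handle this I plan to regularize via the standard oscillatory-integral device, using integration by parts based on
\[
(1-\ep^2\Delta_\xi)^M e^{\frac{i}{\ep}(y-x)\xi}=\langle y-x\rangle^{2M}e^{\frac{i}{\ep}(y-x)\xi}\, ,\qquad (1-\ep^2\Delta_y)^N e^{\frac{i}{\ep}(y-x)\xi}=\langle\xi\rangle^{2N}e^{\frac{i}{\ep}(y-x)\xi}\, ,
\]
with $M,N$ chosen large enough (relative to the growth order of $p$ and the compact support of $u$) that after integration by parts both sides of \eqref{0prop1app} are represented as absolutely convergent multiple integrals or sums. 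Fubini and the classical Poisson summation formula then apply; alternatively, one may truncate the Poisson sum by a Schwartz cut-off $\chi(\ep\eta)$, treat each side as a $\chi\nearrow 1$ limit of absolutely convergent expressions, and conclude by dominated convergence. Either route reduces the proof to the formal chain above, and the identity \eqref{0prop1app} follows.
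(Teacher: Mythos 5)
Your argument coincides with the paper's in substance: the paper writes the $\xi$-integral in \eqref{psdo3d} as an $\ep$-scaled Fourier transform of the periodic symbol and invokes the fact that the Fourier transform of a $2\pi$-periodic function is a Dirac comb on $\disk$ weighted by the Fourier coefficients $c_z=\int_{[-\pi,\pi]^d}e^{-\frac{i}{\ep}z\mu}g(\mu)\,d\mu$, which is exactly the Poisson summation you arrive at after splitting $\R^d_\xi$ into fundamental-domain translates; the two derivations collapse the $y$-integral to the lattice sum in the same way and your constants check out. Your supplementary discussion of oscillatory-integral regularization to justify the interchange of sums, integrals and distributional pairings is not in the paper (which performs the computation ``formally''), and is a legitimate tightening of the argument.
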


\begin{proof}
For $x\neq \disk$ both sides of \eqref{0prop1app} are zero, so we choose $x\in\disk$. Then for $u\in \Ce_0^\infty(\R^d)$, 
using the $\ep$-scaled Fourier transform
\begin{equation}\label{fourier}
F_\ep u (x) =\sqrt{2\pi}^{-d} \int_{\R^d} e^{-\frac{i}{\ep}x\xi} u(\xi) \, d\xi\; ,
\end{equation}
we can write
\begin{equation}\label{1prop1app}
\widetilde{\Op}_\ep (p) u (x; \ep) = \bigl(\ep\sqrt{2\pi}\bigr)^{-d} \int_{\R^d} \bigl( F_\ep p (x, y, \cdot, \ep)\bigr)(x-y) u(y)\, dy\; .
\end{equation}
Since for any $2\pi$-periodic function $g\in\Ce^\infty (\R^d)$ the Fourier transform is given by
\begin{equation}\label{opall1}
F_\ep g = \left(\frac{\ep}{\sqrt{2\pi}}\right)^d \sum_{z\in\disk} \delta_z c_z\, , \quad\text{where}\quad
c_z:= \int_{[-\pi,\pi]^d} e^{-\frac{i}{\ep}z\mu} g(\mu)\, d\mu \; ,
\end{equation}
 (see e.g. \cite{hormander2}), we formally get
\begin{align}
\text{rhs} \eqref{1prop1app} &=  \bigl(\ep\sqrt{2\pi}\bigr)^{-d} \int_{\R^d} \left(\frac{\ep}{\sqrt{2\pi}}\right)^d
\sum_{z\in\disk} \int_{[-\pi, \pi]^d} e^{-\frac{i}{\ep}z\mu} p (x, y, \mu ; \ep)\, d\mu \delta_z (x-y) u(y)\, dy\nonumber\\
&= \bigl(2\pi\bigr)^{-d} \sum_{z\in\disk} \int_{[-\pi, \pi]^d} \int_{\R^d} 
e^{-\frac{i}{\ep}z\mu} p (x, y, \mu ; \ep) \delta_z (x-y) u(y)\, dy\, d\mu\nonumber\\
&=  \bigl(2\pi\bigr)^{-d} \sum_{z\in\disk} \int_{[-\pi, \pi]^d} 
e^{-\frac{i}{\ep}z\mu} p (x, x-z, \mu ; \ep) u(x-z)\, d\mu\; .\label{2prop1app}
\end{align}
With the substitution $y=x-z$ and $\xi = \mu$ we get by \eqref{1prop1app} and \eqref{2prop1app}
\[ \widetilde{\Op}_\ep (p) u(x; \ep) = 
 \bigl(2\pi\bigr)^{-d} \sum_{y\in\disk} \int_{[-\pi, \pi]^d} 
e^{-\frac{i}{\ep}(x-y)\xi} p (x, y, \xi ; \ep) u(y)\, dy\, d\xi = \widetilde{\Op}_\ep^\T (p) u(x; \ep) \]
proving the stated result.

\end{proof}

\begin{rem}\label{remprop1app}
Let $m$ be an order function on $\R^{2d}$ and $p\in S^k_\delta\bigl(m\bigr)\bigl(\R^{2d}\bigr)$ a symbol. 
Then, setting $\tilde{p}_t (x, y, \xi; \ep) := p( tx + (1-t) y, \xi; \ep)$ for $t\in[0,1]$, 
we have $\widetilde{\Op}_\ep (\tilde{p}_t) = \Op_{\ep, t} (p)$. Thus the $t$-quantization can be seen as a 
special case of the general quantization. 

Moreover,  if $p$ is periodic in $\xi$, i.e. if
$p(x, \xi; \ep) = p(x, \xi + 2\pi\eta; \ep)$ for any $\eta\in\Z^d, \xi,x\in \R^d$ and $\ep\in (0,\ep_0]$, then
$p\in S^k_\delta\bigl(m\bigr)\bigl(\R^{d}\times \T^d\bigr)$, 
\[
r_\ep \circ \Op_{\ep,t} (p) (u) = \Op_{\ep,t}^\T (p) \circ r_\ep (u) \, , \qquad u\in \Ce_0^\infty(\R^d)
\]
and $\widetilde{\Op}_\ep^\T (\tilde{p}_t) = \Op_{\ep,t}^\T (p)$. 
\end{rem}

\begin{rem}
For $a\in S_\delta^k\bigl(\langle \xi\rangle^\ell, \R^{3d}\bigr)$ the operator $\widetilde{\Op}_\ep (a)$ is continuous: 
$\mathscr{S}(\R^d) \rightarrow \mathscr{S}(\R^d)$ (see e.g. \cite{martinez}) and, similar to Lemma A.2 in
\cite{kleinro2}, this result implies that $\widetilde{\Op}_\ep^\T(a)$ is continuous:
$s(\disk) \rightarrow s(\disk)$ by use of Proposition \ref{prop1app}.
\end{rem}

The following proposition gives a relation between the different quantizations for symbols which are periodic with respect to $\xi$. The proof is partly based on 
\cite{martinez}, where the result is shown for symbols in $S_0^0\bigl(\langle \xi\rangle^m\bigr)\bigl(\R^{3d}\bigr)$.

\begin{prop}\label{prop2app}
For $0\leq \delta < \frac{1}{2}$, let $a\in S_\delta^k\bigl(m\bigr)\bigl(\R^{2d}\times \T^d\bigr)$ and $t\in [0,1]$, then there exists a unique symbol 
$a_t\in S_\delta^k\bigl(\tilde{m}\bigr)\bigl(\R^d\times \T^d\bigr)$ where $\tilde{m}(x,\xi) := m(x, x, \xi)$ such that
\begin{equation}\label{0prop2app}
\widetilde{\Op}_\ep^\T (a) = \Op_{\ep, t}^\T (a_t)\; . 
\end{equation}
Moreover the mapping $S^k_\delta(m)\ni a\mapsto a_t\in S^k_\delta(\tilde{m})$ is continuous in its Fr\'echet-topology induced from 
\eqref{Frechet-Norm}. $a_t$ can be written as
\begin{equation}\label{1prop2app}
a_t (x, \xi; \ep) 
= (2\pi)^{-d} \sum_{\theta\in\disk} \int_{[-\pi,\pi]^d} e^{\frac{i}{\ep}(\xi - \mu) \theta} a\bigl(x+t\theta, x-(1-t)\theta, \mu; \ep\bigr)\, d\mu
\end{equation}
and has the asymptotic expansion
\begin{equation}\label{2prop2app}
a_t (x, \xi; \ep) \sim \sum_{j=0}^\infty \ep^j a_{t,j}(x, \xi) \, , \qquad a_{t,j}(x, \xi) := \sum_{\natop{\alpha\in\N^d}{|\alpha|=j} } 
\frac{i^{j}}{\alpha!} \partial_\xi^\alpha
\partial_z^\alpha a \bigl(x + t z, x - (1-t) z, \xi; \ep\bigr)|_{z=0}\; .
\end{equation}
If we write $a_t(x, \xi; \ep)  = \sum_{j\leq N-1} \ep^j a_{t,j}(x, \xi) + S_N(a) (x, \xi; \ep)$
then $S_N(a)\in S_\delta^{k + N(1-2\delta)}\bigl(\tilde{m}\bigr)\bigl(\R^d\times \T^d\bigr)$ and the Fr\'echet-seminorms of $S_N$ depend (linearly) 
on finitely many $\|a\|_{\alpha}$ with $|\alpha| \geq N$.
\end{prop}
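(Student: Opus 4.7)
The plan is to derive the formula \eqref{1prop2app} by inverting the operator identity $\widetilde{\Op}_\ep^\T(a)=\Op_{\ep,t}^\T(a_t)$ and then use it to extract the symbol estimates, the asymptotic expansion, and the remainder bound. The analytic backbone consists of two identities, which play the role that $\R^{2d}$-Fourier inversion plays in the standard pseudo-differential calculus:
\[ (2\pi)^{-d}\int_{[-\pi,\pi]^d} e^{\frac{i}{\ep}\xi z}\,d\xi = \delta_{z,0}, \qquad z\in\disk, \]
and
\[ (2\pi)^{-d}\sum_{\theta\in\disk}\int_{[-\pi,\pi]^d} e^{\frac{i}{\ep}(\xi-\mu)\theta} f(\mu)\,d\mu = f(\xi), \qquad f\in\Ce^\infty(\T^d). \]
The second is proved by expanding $f$ in its Fourier series and summing a geometric series over $\theta=\ep n$, $n\in\Z^d$. \emph{Verification of \eqref{1prop2app}} is then direct: insert the ansatz into $\Op_{\ep,t}^\T(a_t)v(x)$ for $v\in\mathcal{K}(\disk)$, integrate in $\xi$ to get $\delta_{y-x+\theta,0}$ by the first identity, which forces $\theta=x-y$, whereupon $(1-t)x+ty+t\theta=x$ and $(1-t)x+ty-(1-t)\theta=y$ recover $\widetilde{\Op}_\ep^\T(a)v(x)$. \emph{Uniqueness}: testing $\Op_{\ep,t}^\T(a_t-b_t)=0$ on $v=\delta_y$ shows that all Fourier coefficients in $\xi$ of $(a_t-b_t)((1-t)x+ty,\,\cdot\,;\ep)$ vanish for every $x,y\in\disk$, and smoothness in the first variable propagates this to $\R^d\times\T^d$.

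\textbf{Symbol estimates.} To prove $a_t\in S_\delta^k(\tilde m)$ one bounds $\partial_x^\beta\partial_\xi^\gamma a_t$ directly from \eqref{1prop2app}. The sum over $\theta\in\disk$ is only conditionally convergent and is tamed by integration by parts in $\mu$: since $a$ is $2\pi$-periodic in its third slot there are no boundary terms, and each IBP converts a factor of $\theta/\ep$ into $-i\partial_\mu$ at a cost of $\ep^{-\delta}$ from $\partial_\mu a$. Performing $M>d+|\gamma|$ such IBPs compensates the factors $(\theta/\ep)^\gamma$ produced when $\partial_\xi^\gamma$ hits the exponential, yields an $\ell^1(\theta)$-summable series, and---using the order function estimate $m(x+t\theta,x-(1-t)\theta,\xi)\leq C\langle\theta\rangle^M\tilde m(x,\xi)$---gives $|\partial_x^\beta\partial_\xi^\gamma a_t|\leq C\ep^{k-\delta(|\beta|+|\gamma|)}\tilde m(x,\xi)$, uniformly in $\ep$. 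The constants depend linearly on finitely many Fr\'echet seminorms of $a$, which also yields the continuity of $a\mapsto a_t$.

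\textbf{Asymptotic expansion and remainder.} Taylor expand $a(x+t\theta,x-(1-t)\theta,\mu;\ep)$ in $\theta$ around $\theta=0$ to order $N$, with integral remainder. For each polynomial term of degree $|\alpha|<N$, combine the identity $\theta^\alpha e^{\frac{i}{\ep}(\xi-\mu)\theta}=(-i\ep)^{|\alpha|}\partial_\xi^\alpha e^{\frac{i}{\ep}(\xi-\mu)\theta}$ with the dual Fourier identity above: the $(\theta,\mu)$ sum-integral collapses and produces the coefficient $a_{t,j}$ ($j=|\alpha|$) of \eqref{2prop2app}. For the Taylor remainder the $\theta$-dependence inside $a$ is no longer polynomial (it enters through $z=s\theta$, $s\in[0,1]$), so the dual identity does not apply; instead convert each of the $N$ factors of $\theta$ into $-i\ep\partial_\mu$ by IBP in $\mu$, extracting an overall prefactor $\ep^N$, and then perform further IBPs (as in the previous paragraph) to enforce $\ell^1$-convergence in $\theta$. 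Since the combined operator $\partial_\xi\partial_z$ that appears costs at most $\ep^{-2\delta}$ per order, one obtains $S_N(a)\in S_\delta^{k+N(1-2\delta)}(\tilde m)$, with Fr\'echet seminorms controlled linearly by those $\|a\|_\alpha$ with $|\alpha|\geq N$.

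The main obstacle is the tight book-keeping in the two IBP arguments: each $\partial_\xi$ on $a_t$ releases a factor $\theta/\ep$ from the phase and each $\partial_\mu$ on $a$ costs $\ep^{-\delta}$, so one must do exactly enough IBPs in $\mu$ to reach $\ell^1$-summability in $\theta$ without overspending in powers of $\ep^{-\delta}$. The hypothesis $\delta<\tfrac12$ is precisely what keeps the gain $\ep^{1-2\delta}$ per order in the Taylor expansion strictly positive, and is responsible for the sharp exponent $k+N(1-2\delta)$ in the remainder.
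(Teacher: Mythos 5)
Your approach is correct and matches the paper's proof in all essentials: deriving \eqref{1prop2app} via the discrete Fourier inversion on $\disk\times\T^d$, integration by parts in $\mu$ (the paper uses the equivalent operator $L(\theta,\eta)=-\ep^2\Delta_\eta/|\theta|^2$ after a cutoff near $\theta=0$, a detail you leave implicit but which is needed to avoid dividing by $|\theta|$ at $\theta=0$) for the $S_\delta^k(\tilde m)$ estimate and continuity, Taylor expansion of $a$ in $\theta$ at $0$ combined with the dual Fourier identity for \eqref{2prop2app}, and the same IBP bookkeeping---$N$ factors of $\theta$ traded for $\ep^N$ and $N$ extra $\mu$-derivatives, plus further IBPs for $\ell^1$-summability---giving the $\ep^{k+N(1-2\delta)}$ remainder with the correct dependence on $\|a\|_\alpha$, $|\alpha|\geq N$. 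One small remark: the duality $\theta^\alpha e^{\frac{i}{\ep}(\xi-\mu)\theta}=(-i\ep)^{|\alpha|}\partial_\xi^\alpha e^{\frac{i}{\ep}(\xi-\mu)\theta}$ you invoke actually yields the coefficient $(-i)^{j}/\alpha!$ rather than the $i^{j}/\alpha!$ printed in \eqref{2prop2app}; a one-dimensional check (say $a(x,y,\xi)=ye^{-i\xi}$, $t=0$, for which \eqref{1prop2app} gives $a_0(x,\xi)=(x+\ep)e^{-i\xi}$ exactly) confirms $(-i)^j$ is the right sign, consistent also with unpacking $e^{i\ep D_\theta D_\eta}$ with $D=-i\partial$ in \eqref{8prop2app}, so your derivation is correct and the printed coefficient in \eqref{2prop2app} appears to carry a sign typo.
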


\begin{proof}
To satisfy \eqref{0prop2app}, the symbol $a_t$ above has to satisfy in $\mathcal{D}'(\R^{2d})$
\begin{equation}\label{3prop2app}
\int_{[-\pi, \pi]^d} e^{\frac{i}{\ep}(y-x)\mu} a(x, y, \mu; \ep) \, d\mu = \int_{[-\pi, \pi]^d} e^{\frac{i}{\ep}(y-x)\mu} a_t((1-t) x + t y, \mu; \ep) \, d\mu\, .
\end{equation}
Setting $\theta= x-y$ and $z=(1-t)x + ty = x - t\theta$ in \eqref{3prop2app} gives
\begin{equation}\label{4prop2app}
\int_{[-\pi, \pi]^d} e^{-\frac{i}{\ep}\theta\mu} a\bigl(z + t \theta, z - (1-t)\theta, \mu; \ep\bigr) \, d\mu = 
\sqrt{2\pi}^{d}\bigl(\mathscr{F}_\ep a_t (z, \,.\, ; \ep)\bigr) (\theta)
\end{equation}
where
${\mathscr F}_\ep:L^2\left(\T^d\right)
\to \ell^2\left((\ep{\mathbb Z})^d\right)$ denotes the discrete Fourier transform defined by
\begin{equation}\label{Fou}
\bigl({\mathscr F}_\ep f\bigr)(\theta) := \frac{1}{\sqrt{2\pi}^d}
\int_{[-\pi,\pi]^d} e^{-\frac{i}{\ep}\theta \mu}f(\mu)\,d\mu \, ,
\qquad f\in L^2(\T^d)\, , \; \theta\in\disk
\end{equation}
with inverse ${\mathscr F}_\ep^{-1}:\ell^2\left((\ep{\mathbb Z})^d\right)\to L^2\left(\T^d\right)$,
\begin{equation} \label{Fou-1}
\bigl({\mathscr F}_\ep^{-1}v\bigr)(\xi) :=
\frac{1}{\sqrt{2\pi}^d}\sum_{\theta\in\disk} e^{\frac{i}{\ep}\theta \xi}v(\theta),
\qquad v\in \ell^2\left(\disk\right)\, , \; \xi\in\T^d
\end{equation}
where the sum in understood in standard L.I.M-sense.
Thus taking the inverse Fourier transform $\mathscr{F}_\ep^{-1}$ on both sides of \eqref{4prop2app} yields \eqref{1prop2app}.\\
To analyze the right hand side of  \eqref{1prop2app}, we set $\eta=\mu -\xi$ and introduce a cut-off-function
$\zeta\in {\mathcal K}\left(\disk, [0,1]\right)$ with $\zeta = 1$ in a neighborhood of $0$ to get
\begin{align}
a_t (x, \xi; \ep) &= b_{t,1}(x, \xi; \ep) + b_{t,2}(x, \xi; \ep)\qquad\text{with}\label{5prop2app}\\
b_{t,1}(x, \xi; \ep)&:= (2\pi)^{-d} \sum_{\theta\in\disk} \int_{[-\pi,\pi]^d} e^{-\frac{i}{\ep}\eta \theta}(1-\zeta(\theta)) 
a\bigl(x+t\theta, x-(1-t)\theta, \xi + \eta; \ep\bigr)\, d\eta\nonumber\\
b_{t,2}(x, \xi; \ep)&:= (2\pi)^{-d} \sum_{\theta\in\disk} \int_{[-\pi,\pi]^d} e^{-\frac{i}{\ep}\eta \theta}\zeta(\theta)
a\bigl(x+t\theta, x-(1-t)\theta, \xi + \eta; \ep\bigr)\, d\eta\nonumber
\end{align}
The aim is now to show $b_{t,1}\in S^\infty (\tilde{m})(\R^d\times \T^d)$ and $b_{t,2}\in S_\delta^k(\tilde{m})(\R^d\times \T^d)$ having the required
asymptotic expansion and that the mappings $a\mapsto b_{t,1}$ and $a\mapsto b_{t,2}$ are continuous.

Since $e^{\frac{i}{\ep}2\pi \eta z} = 1$ for all $z\in\disk$ and $\eta\in\Z^d$, it follows at once from
\eqref{1prop2app} that $b_{t,i} (x, \xi + 2\pi \eta; \ep) = b_{t,i}(x, \xi; \ep)$ for $i=1,2$.

By use of the operator $L(\theta, \eta):= \frac{-\ep^2\Delta_\eta}{|\theta|^2}$, which is well defined on the support 
of $1-\zeta (\theta)$ and fulfills $L(\theta,\eta)e^{-\frac{i}{\ep}\theta\eta}= e^{-\frac{i}{\ep}\theta\eta}$, we have for
any $n\in \N$ by partial
integration, using the $2\pi$-periodicity of the symbol $a(x, y, \xi; \ep)$ with respect to $\xi$,
\begin{align}
b_{t,1}(x,\xi;\ep) = (2\pi)^{-d} \sum_{\theta\in\disk}\int_{[-\pi,\pi]^d} \left( L^n(\theta,\eta) 
e^{-\frac{i}{\ep}\theta\eta}\right)
                    (\id - \zeta(\theta)) a\bigl(x + t \theta, x - (1-t)\theta,\xi+\eta;\ep\bigr)\, d\eta\nonumber  \\
 = (2\pi)^{-d}\sum_{\theta\in\disk}\int_{[-\pi,\pi]^d} e^{-\frac{i}{\ep}\theta\eta}
             \frac{(\id - \zeta(\theta))}{|\theta|^{2n}}(-\ep^2\Delta_\eta)^n a\bigl(x+ t\theta, x- (1-t)\theta,\xi+\eta;\ep\bigr)\, d\eta \, .\label{6prop2app}
\end{align}
Since $a\in S_\delta^k(m)$, the absolute value of the integrand is for some $C>0$ and $M\in\N$ bounded from above by
\begin{equation}\label{7prop2app}
  C \ep^{k + 2n(1-\delta)}\frac{m(x+ t\theta, x- (1-t)\theta, \xi+\eta)}{\langle \theta \rangle^{2n}} \leq
           C \ep^{k + 2n(1-\delta)}\langle\theta\rangle^{M - 2n}\langle\eta\rangle^M m(x, x, \xi) \, .
\end{equation}
This term is integrable and summable for $n$ sufficiently large yielding
\[ b_{t,1}(x,\xi;\ep) = \ep^{k + 2n(1- \delta)-d}O(\tilde{m}(x,\xi))\; .\]
The derivatives can be
estimated similarly, and thus $ b_{t,1}\in S^\infty(\tilde{m})\bigl(\R^d\times \T^d\bigr)$.\\
To see the continuity of $S^k_\delta(m)\ni a\mapsto b_{t,1}\in S^{k + 2n(1- \delta)-d}_\delta(\tilde{m})$ for any $n\in\N$ large enough, 
we use \eqref{6prop2app} and \eqref{7prop2app} 
to estimate for any $\alpha, \beta\in \N^d$ and $x\in\R^d, \xi\in\T^d$
\begin{align*}
\Bigl|\partial_x^\alpha\partial_\xi^\beta b_{t,1}(x,\xi;\ep)\Bigr| &\leq C \sum_{\theta\in\disk}\int_{[-\pi,\pi]^d} 
             \frac{|\id - \zeta(\theta)|}{|\theta|^{2n}}\frac{\bigl|(-\ep^2\Delta_\eta)^n \partial_x^\alpha\partial_\xi^\beta a(x+ t\theta, x- (1-t)\theta,\xi+\eta;\ep)\bigr|}
             {\ep^{k + 2n(1- \delta)-d}m(x+ t\theta, x- (1-t)\theta,\xi+\eta)}\\
             &\qquad \times \ep^{k + 2n(1- \delta)-d}m(x+ t\theta, x- (1-t)\theta,\xi+\eta)\, d\eta\\
           &\leq C \ep^{k + 2n(1- \delta)-d} m(x,x,\xi)\|a\|_{(\alpha,\tilde{\beta}(n))}  \sum_{\theta\in\disk}\int_{[-\pi,\pi]^d} 
             |\id - \zeta(\theta)|\langle\theta\rangle^{M - 2n}\langle\eta\rangle^M\, d\eta\\
&\leq C \ep^{k + 2n(1- \delta)-d}\tilde{m}(x,\xi)\|a\|_{(\alpha,\tilde{\beta}(n))}
\end{align*}
for $\tilde{\beta}(n) = \beta + (2n,\ldots 2n)$, where the last estimate holds for $n$ sufficiently large. This gives continuity.

Since in the definition of $b_{t,2}$ in \eqref{5prop2app} integral and sum range over a compact set, it follows analog to the estimates above that
\[ 
\Bigl|\partial_x^\alpha \partial_\xi^\beta b_{t,2}(x, \xi; \ep)\Bigr| \leq C_{\alpha,\beta} \ep^{k - (|\alpha| + |\beta|)\delta}\|a\|_{(\alpha, \beta)}
m(x, x, \xi)  \]
and thus $b_{t,2}\in S_\delta^k(\tilde{m})$ and the mapping $S^k_\delta(m)\ni a\mapsto b_{t,2}\in S^k_\delta(\tilde{m})$ is continuous. 

Thus $S_\delta^k(m)\ni a\mapsto a_t\in S_\delta^k(\tilde{m})$ is continuous. Using standard arguments, the method of stationary phase 
(see e.g. \cite{thesis}, Lemma B.4) gives the asymptotic expansion \eqref{2prop2app}. 

Since $a\mapsto S_N = a_t - \sum_{j=0}^{N-1}\ep^j a_{t,j}$ is obviously continuous, each Fr\'echet-seminorm of $S_N$ can be estimated 
by finitely many Fr\'echet-seminorms of $a$. 
To get the more refined statement $S_N$, we use \eqref{1prop2app} to write
\begin{equation}\label{8prop2app}
a_t (x, \xi; \ep) = e^{i\ep D_\theta D_\eta} a\bigl(x + t\theta, x - (1-t)\theta; \xi + \eta; \ep\bigr)\bigl|_{\theta = 0 = \eta}\; . 
\end{equation}
In fact, by algebraic substitutions, \eqref{8prop2app} is a consequence of the formula
\begin{equation}\label{9prop2app}
 e^{i\ep D_\theta D_\eta} b(\theta, \eta; \ep) = \sum_{z\in\disk} \int_{[-\pi, \pi]^d} e^{-\frac{i}{\ep}z\mu} b(\theta - z, \eta - \mu; \ep)\, d\mu
\end{equation}
for $b\in S^k_\delta(\tilde{m})(\R^d\times \T^d)$, where, for $x, \xi$ fixed, we set 
$b(\theta, \eta; \ep) = a(x + t\theta, x - (1-t)\theta; \xi + \eta; \ep)$.
\eqref{9prop2app} may be proved by writing $e^{i\ep D_\theta D_\eta}$ as a multiplication operator in the covariables and applying the 
Fouriertransforms ${\mathscr F}_\ep$, ${\mathscr F}_\ep^{-1}$, using that $e^{-\frac{i}{\ep}x\xi}$ is invariant under 
${\mathscr F}_{\ep, \xi\rightarrow z}{\mathscr F}_{\ep, x\rightarrow \mu}^{-1}$ and the standard fact that Fourier transform maps 
products to convolutions (see \cite{thesis}).

Using Taylor's formula for $e^{ix}$, we get
\[ S_N(a)(x, \xi; \ep) = \frac{(i\ep D_\theta D_\eta)^N}{(N-1)!} \int_0^1 (1-s)^{N-1}e^{i\ep s D_\theta D_\eta}\, ds\;
a \bigl(x + t \theta, x - (1-t)\theta, \eta + \xi; \ep\bigr) |_{\theta = 0 = \eta}\, , \]
proving that $S_N$ only depends on Fr\'echet-seminorms of
$(D_\theta D_\eta)^N a\bigl(x+t\theta, x-(1-t)\theta, \eta + \xi; \ep\bigr)$ and thus not on Fr\'echet-seminorms $\|a\|_\alpha$ with $|\alpha|< N$.

\end{proof}

The norm estimate \cite{kleinro2}, Proposition A.6, for operators $\Op_\ep^\T(q)$ with a bounded symbol 
$q\in S^k_\delta (1)(\R^d\times \T^d)$ combined with Proposition \ref{prop2app}
leads at once to the following corollary.

\begin{cor}\label{cor1app}
Let $a\in S^k_\delta(1)\left(\R^{2d}\times\T^d\right)$
with  $0\leq \delta \leq \frac{1}{2}$.
Then there exists a constant $M>0$ such that, for the associated operator $\widetilde{\Op}_\ep^{\T}(a)$ given by
\eqref{psdo3dTorus} the
estimate
\begin{equation}\label{cvab}
\left\| \widetilde{\Op}_\ep^{\T}(a) u \right\|_{\ell^2(\disk)} \leq M \ep^r \|u\|_{\ell^2(\disk)}
\end{equation}
holds for any $u\in s\left(\disk\right)$ and $\ep>0$.
$\widetilde{\Op}_\ep^{\T}(a)$ can therefore be extended to a continuous operator:
$\ell^2\left(\disk\right)\longrightarrow \ell^2\left(\disk\right)$ with
$\|\widetilde{\Op}_\ep^{\T}(a)\| \leq M\ep^r$. Moreover $M$ can be chosen depending only on a finite number of
Fr\'echet-seminorms of the symbol $a$.
\end{cor}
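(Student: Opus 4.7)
The plan is to reduce the three-variable symbol setting to the two-variable setting already treated in \cite{kleinro2}, Proposition A.6, using Proposition \ref{prop2app} as the bridge. Concretely, I would proceed as follows.

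First, given $a\in S^k_\delta(1)(\R^{2d}\times \T^d)$, apply Proposition \ref{prop2app} with $t=0$ (or any fixed $t\in[0,1]$) to obtain a symbol $a_0\in S^k_\delta(1)(\R^{d}\times \T^d)$ such that
\[
\widetilde{\Op}_\ep^{\T}(a) \;=\; \Op_{\ep,0}^{\T}(a_0)
\]
as operators on $\mathcal{K}(\disk)$. Here the order function $\tilde m(x,\xi) = m(x,x,\xi)$ simplifies to the constant $1$ since $m \equiv 1$, so we land in the same boundedness-class $S^k_\delta(1)$.

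Second, invoke the $\ell^2$-boundedness for torus-periodic symbols in one spatial variable established in \cite{kleinro2}, Proposition A.6: there exists $M'>0$ such that
\[
\bigl\| \Op_{\ep,0}^{\T}(a_0) u \bigr\|_{\ell^2(\disk)} \;\leq\; M' \ep^{k} \, \| u \|_{\ell^2(\disk)}, \qquad u\in s(\disk),
\]
with $M'$ controlled by only finitely many Fr\'echet-seminorms $\|a_0\|_{\alpha}$, $|\alpha|\leq N_0$, for some fixed $N_0$. This yields the estimate \eqref{cvab} and, by density of $s(\disk)$ in $\ell^2(\disk)$, the extension to a bounded operator on $\ell^2(\disk)$ with $\|\widetilde{\Op}_\ep^{\T}(a)\|\leq M' \ep^{k}$.

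Finally, to see that $M$ may be chosen to depend on only finitely many Fr\'echet-seminorms of $a$ itself (rather than of $a_0$), I would use the continuity statement in Proposition \ref{prop2app}: the map $S^k_\delta(1)(\R^{2d}\times \T^d)\ni a \mapsto a_0\in S^k_\delta(1)(\R^{d}\times \T^d)$ is continuous in the respective Fr\'echet topologies. Consequently, each of the finitely many seminorms $\|a_0\|_{\alpha}$ with $|\alpha|\leq N_0$ entering $M'$ is bounded by a finite linear combination of seminorms $\|a\|_{\beta}$, $|\beta|\leq N_1$, for some $N_1$. Setting $M$ to be the resulting constant finishes the argument. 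The proof is essentially a straightforward composition, and I do not foresee a main obstacle; the only point requiring care is tracking that the continuity of $a\mapsto a_0$ indeed produces a bound via \emph{finitely many} seminorms of $a$, which is built into the statement of Proposition \ref{prop2app}.
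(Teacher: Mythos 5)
Your argument matches the paper's own (one-sentence) proof exactly: reduce $\widetilde{\Op}_\ep^{\T}(a)$ to a $t$-quantization $\Op_{\ep,t}^{\T}(a_t)$ of a two-variable symbol via Proposition \ref{prop2app}, apply the $\ell^2$-boundedness from \cite{kleinro2}, Proposition A.6, and use the stated continuity of $a\mapsto a_t$ in the Fr\'echet topology to trace the constant back to finitely many seminorms of $a$. One small shared caveat, present also in the paper: Proposition \ref{prop2app} is stated for $0\leq\delta<\tfrac{1}{2}$, while the corollary allows $\delta\leq\tfrac{1}{2}$; this endpoint is never used in the paper (all applications have $\delta=0$), but strictly speaking the reduction step at $\delta=\tfrac{1}{2}$ would need a separate Calder\'on--Vaillancourt-type argument.
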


In the next proposition, we analyze the symbol of an operator conjugated with a term $e^{\varphi/\ep}$.

\begin{prop}\label{prop3app}
Let $q\in S^k_\delta \bigl(1\bigr)\bigl(\R^{2d}\times \T^d\bigr)$, $0\leq \delta<\frac{1}{2}$, be a symbol such that
the map $\xi \mapsto q(x, y, \xi; \ep)$ can be extended to an analytic function on $\C^d$.
Let $\psi\in \Ce^\infty (\R^d, \R)$ such that all derivatives are bounded.\\
Then
\[ Q_\psi:= e^{\psi/\ep} \widetilde{\Op}^\T_{\ep}(q) e^{-\psi / \ep} \] 
is the quantization of the symbol $\hat{q}_\psi\in S^k_\delta \bigl( 1\bigr)\bigl(\R^{2d}\times \T^d\bigr)$ given by
\begin{equation}\label{00prop3app}
\hat{q}_\psi (x, y, \xi; \ep) := q( x, y, \xi - i \Phi (x,y); \ep) 
\end{equation}
where $\Phi$ is given in \eqref{2prop3app}. In particular, the map $\xi \mapsto \hat{q}_\psi(x,y,\xi;\ep)$ can be extended to
an analytic function on $\C^d$. If $q$ has an asymptotic expansion $q\sim \sum_n \ep^n q_n$ in $\ep$, then
the same is true for $\hat{q}_\psi$.

For $t\in [0,1]$, the operator $Q_\psi$ 
is the $t$-quantization of a symbol $q_{\psi, t} \in  S^k_\delta \bigl(1\bigr)\bigl(\R^{d}\times \T^d\bigr)$ with
asymptotic expansion $q_{\psi, t} \sim \sum_n q_{n,\psi,t}$ such that 
$q_{\psi,t} - \sum_{n=0}^{N-1} q_{n,\psi, t} \in S^{k+N(1-2\delta)}(1)(\R^d\times \T^d)$. 
Moreover, the map $\xi \mapsto q_{\psi, t}(x,\xi; \ep)$ can be extended to an analytic function on $\C^d$ and 
\begin{equation}\label{0prop3app}
q_{\psi, t} (x, \xi; \ep) = \hat{q}_\psi (x,x,\xi;\ep) = q ( x, x, \xi - i \nabla \psi (x); \ep) \quad 
\mod  S^{k+1-2\delta}_\delta \bigl(1\bigr)\bigl(\R^{d}\times \T^d\bigr)\; .
\end{equation}
\end{prop}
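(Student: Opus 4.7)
The plan is to compute the kernel of $Q_\psi$ explicitly, absorb the conjugation factor $e^{(\psi(x)-\psi(y))/\ep}$ into the oscillatory phase by a $d$-dimensional contour deformation, and then apply Proposition \ref{prop2app} to pass to the $t$-quantization. First I would introduce the auxiliary function
\[ \Phi(x,y) := \int_0^1 \nabla\psi\bigl(y + s(x-y)\bigr)\, ds \]
so that $\psi(x) - \psi(y) = (x-y)\cdot\Phi(x,y)$, $\Phi(x,x) = \nabla\psi(x)$, and $\Phi\in\Ce^\infty(\R^{2d})$ has all derivatives bounded (by the hypothesis on $\psi$). Inserting the definition of $\widetilde{\Op}^\T_\ep(q)$ into $Q_\psi = e^{\psi/\ep}\widetilde{\Op}^\T_\ep(q)e^{-\psi/\ep}$ and collecting exponents gives total phase $\tfrac{i}{\ep}(y-x)\cdot\xi + \tfrac{1}{\ep}(x-y)\cdot\Phi(x,y)$, which factors as $\tfrac{i}{\ep}(y-x)\cdot(\xi + i\Phi(x,y))$.

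For fixed $x,y\in\disk$ the integrand is entire in $\xi\in\C^d$ and $2\pi$-periodic in each $\xi_j$ on the real axis. Applying Lemma \ref{Lem1} coordinate by coordinate, I would shift the contour $[-\pi,\pi]^d$ to $[-\pi,\pi]^d - i\Phi(x,y)$; parametrizing the new contour by $\xi\in[-\pi,\pi]^d$ via $\zeta = \xi - i\Phi(x,y)$ cancels the $\Phi$-dependent part of the exponent and leaves the pure phase $\tfrac{i}{\ep}(y-x)\cdot\xi$ with the new amplitude
\[ \hat{q}_\psi(x,y,\xi;\ep) = q(x,y,\xi - i\Phi(x,y);\ep)\,, \]
which is \eqref{00prop3app}. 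To see $\hat{q}_\psi\in S^k_\delta(1)(\R^{2d}\times\T^d)$, I would use the analyticity and $\xi$-periodicity of $q$, together with Cauchy's integral formula on a polydisc of radius $\|\nabla\psi\|_{L^\infty}+1$ in $\Im\xi$, to obtain uniform bounds on $q(x,y,\xi + i\eta;\ep)$ and on all its $(x,y)$-derivatives in terms of the symbol seminorms of $q$; the chain rule applied to the composition with the bounded function $\Phi$ then produces the required symbol estimates for $\hat{q}_\psi$. Analyticity in $\xi$ is immediate from the formula, and an asymptotic expansion $q\sim\sum\ep^n q_n$ transfers termwise to $\hat{q}_\psi\sim\sum\ep^n q_n(x,y,\xi-i\Phi;\ep)$.

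Applying Proposition \ref{prop2app} to $\hat{q}_\psi$ produces the desired symbol $q_{\psi,t}\in S^k_\delta(1)(\R^d\times\T^d)$ with $\widetilde{\Op}^\T_\ep(\hat{q}_\psi) = \Op^\T_{\ep,t}(q_{\psi,t})$, together with an asymptotic expansion in $\ep$ and the remainder statement $S_N(\hat{q}_\psi)\in S^{k+N(1-2\delta)}_\delta(1)$. The leading term of \eqref{2prop2app} (the $|\alpha|=0$ contribution) is $\hat{q}_\psi(x,x,\xi;\ep)$, and $\Phi(x,x) = \nabla\psi(x)$ gives $\hat{q}_\psi(x,x,\xi;\ep) = q(x,x,\xi - i\nabla\psi(x);\ep)$, which together with $N=1$ yields \eqref{0prop3app}. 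For the analyticity of $q_{\psi,t}(x,\cdot;\ep)$ on $\C^d$, the explicit representation \eqref{1prop2app} expresses $q_{\psi,t}$ as an oscillatory sum-integral whose $\xi$-dependence appears only through $e^{\frac{i}{\ep}\xi\theta}$; convergence and differentiability in $\xi$ for complex $\xi$ are obtained by first shifting the $\mu$-contour (in the same spirit as the first step) to generate exponential decay in the summation index $\theta$ that compensates the exponential growth of $e^{\frac{i}{\ep}\xi\theta}$.

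The main obstacle will be the careful verification that $\hat{q}_\psi$ lies in $S^k_\delta(1)$ with Fréchet-seminorm dependence controlled by those of $q$: one must separate the \emph{complex deformation} estimate, which only uses analyticity and $\xi$-periodicity of $q$, from the \emph{chain-rule} estimate for the $(x,y)$-dependent composition with $\Phi$, and combine them so that the seminorm dependence prescribed by Proposition \ref{prop2app} is preserved. This is precisely what makes later operator-norm estimates of the form of Corollary \ref{cor1app} applicable to $Q_\psi$; all the remaining steps are routine consequences of the symbolic calculus on $\T^d$ developed earlier in the excerpt.
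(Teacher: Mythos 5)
Your proposal is correct and follows essentially the same approach as the paper: write the kernel with the conjugation factor absorbed into the exponent using the mean-value integral $\Phi(x,y)$ (your parametrization $y+s(x-y)$ agrees with the paper's $(1-t)x+ty$ after $s=1-t$), shift the $\xi$-contour coordinate-by-coordinate via the periodicity lemma (Lemma~\ref{Lem1}) to obtain $\hat q_\psi(x,y,\xi)=q(x,y,\xi-i\Phi(x,y))$, and then feed this into Proposition~\ref{prop2app} to extract the $t$-quantized symbol $q_{\psi,t}$ and its leading-order formula. Your additional remarks (Cauchy estimates to control the symbol seminorms of $\hat q_\psi$, and a contour-shift argument for the $\xi$-analyticity of $q_{\psi,t}$) spell out points the paper leaves implicit; the paper simply asserts $\hat q_\psi\in S^k_\delta(1)$ from boundedness of the derivatives of $\Phi$, but your attempt to justify bounds on $q$ in complex strips shares the same implicit assumption the paper makes.
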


\begin{proof}
The integral kernel of $e^{\psi/\ep} \widetilde{\Op}^\T_{\ep}(q) e^{-\psi / \ep} $ is given by the 
oscillating integral
\begin{align}
K_\psi (x,y) & := (2\pi)^{-d} \int_{[-\pi, \pi]^d} e^{\frac{i}{\ep}[ (y-x)\xi + i (\psi (y) - \psi (x))]} 
q( x, y, \xi; \ep) \, d\xi \nonumber\\
&=  (2\pi)^{-d} \int_{[-\pi, \pi]^d} e^{\frac{i}{\ep}(y-x) [\xi + i \Phi (x, y)]} q( x, y, \xi; \ep) \, d\xi \label{1prop3app}
\end{align}
where we set 
\begin{equation}\label{2prop3app} 
\Phi (x, y) := \int_0^1 \nabla \psi ( (1-t) x + t y)\, dt\; .
\end{equation} 
Substituting $\tilde{\xi} := \xi + i \Phi (x, y)$ and iteratively using Lemma \ref{Lem1} yields
\begin{align}
\text{rhs }\eqref{1prop3app} &= 
(2\pi)^{-d} \int_{[-\pi, \pi]^d + i \Phi (x,y)} e^{\frac{i}{\ep}(y-x) \tilde{\xi}} q( x, y, \tilde{\xi} - i \Phi (x,y); \ep) 
\, d\tilde{\xi} \nonumber\\
&= (2\pi)^{-d} \int_{[-\pi, \pi]^d} e^{\frac{i}{\ep}(y-x) \tilde{\xi}} q( x, y, \tilde{\xi} - i \Phi (x,y); \ep) \, 
d\tilde{\xi} 
\label{3prop3app}
\end{align}
The right hand side of \eqref{3prop3app} is the integral kernel of $\widetilde{\Op}^\T_\ep \bigl(\hat{q}_\psi\bigr)$ for
$\hat{q}_\psi$ given by \eqref{00prop3app}. 
Since all derivatives of $\Phi$ are bounded by assumption, if follows that
$\hat{q}_\psi\in S^k_\delta \bigl( 1\bigr)\bigl(\R^{2d}\times \T^d\bigr)$. The statement on the analyticity of $\hat{q}_\psi$ with respect to $\xi$ and
on the existence of an asymptotic expansion follow at once from equality \eqref{00prop3app}.

Concerning the statement on the $t$-quantization we use Proposition \ref{prop2app}, showing that there is a unique
symbol $\hat{q}_{t,\psi} \in S^k_\delta \bigl( 1\bigr)\bigl(\R^{d}\times \T^d\bigr)$ such that 
$\widetilde{\Op}_\ep^\T (\hat{q}_\psi) = \Op_{\ep, t}^\T (\hat{q}_{t,\phi})$. Moreover, by \eqref{2prop2app},  we have in leading order, i.e. 
modulo $S_\delta^{k+1-2\delta}(1)$,
\begin{equation}\label{4prop3app}
\hat{q}_{t,\psi} (x, \xi; \ep) = \hat{q}_\psi(x, x, \xi; \ep) = q (x, x, \xi - i \Phi(x, x); \ep) = 
q (x, x, \xi - i \nabla \psi(x); \ep)\; .
\end{equation} 
and $q_{\psi,t}$ has an asymptotic expansion with the stated properties.

\end{proof}

\begin{rem}\label{remprop3app}
Let $p\in S^k_\delta \bigl( 1\bigr)\bigl(\R^{d}\times \T^d\bigr)$ and $s, t\in [0,1]$. 
Then it follows at once from Remark \ref{remprop1app} that
$e^{\psi/\ep} \widetilde{\Op}^\T_{\ep, t}(p) e^{-\psi / \ep}$ is the $s$-quantization of a symbol 
$p_{\psi, s}\in  S^k_\delta \bigl( 1\bigr)\bigl(\R^{d}\times \T^d\bigr)$ satisfying
\[ p_{\psi, s}(x, \xi; \ep) = p(x, \xi - i\nabla \psi (x); \ep) \mod S^{k+1}_\delta(1) \; . \]
\end{rem}

\section{Former results}\label{app2}

In the more general setting, that there might be more than two Dirichlet operators with spectrum inside of the
spectral interval $I_\ep$, let
\begin{eqnarray}\label{specHepusw}
\spec (H_\ep) \cap I_\ep = \{ \lambda_1,\ldots , \lambda_N\} \,
,&\quad&
u_1,\ldots ,u_N\in \ell^2\left(\disk\right)\\
\F := \Span \{u_1,\ldots u_N\} \nonumber\\
\spec \left(H_\ep^{M_j}\right) \cap I_\ep = \{ \mu_{j,1},\ldots,
\mu_{j,n_j}\} \, ,
&\quad& v_{j,1},\ldots,v_{j,n_j}\in \ell^2_{M_{j,\ep}},\, j\in {\mathcal C} \nonumber\\
\E_j := \Span \{ v_{j,1},\ldots, v_{j,n_j} \} \, , &\quad & \E :=
\bigoplus \E_j \nonumber
\end{eqnarray}
denote the eigenvalues of $H_\ep$ and of the Dirichlet operators
$H_\ep^{M_j}$ defined in \eqref{HepD} inside the spectral interval  $I_\ep$ and the corresponding real orthonormal systems
of eigenfunctions (these exist because all operators commute with complex conjugation). We write 
\begin{equation}\label{valpha}
v_\alpha\quad\text{with}\quad \alpha =(\alpha_1, \alpha_2)\in \mathcal{J}:=\{(j,k)\,|\,j\in\mathcal{C},\, 1\leq k \leq n_j\}
\quad \text{and}\quad j(\alpha):= \alpha_1\, .
\end{equation}
We remark that the number of eigenvalues $N, n_j\, ,\, j\in\mathcal{C}$ with respect to $I_\ep$ as defined in 
\eqref{specHepusw} may depend on $\ep$.

For a fixed spectral interval $I_\ep$, it is shown in \cite{kleinro4} that the distance
$\vec{\dist}(\E, \F):= \| \Pi_\E - \Pi_\F \Pi_\E\|$ is
exponentially small and determined by $S_0$, the Finsler distance between the two nearest neighboring wells.

The following theorem, proven in \cite{kleinro4}, gives the representation of $H_\ep$ restricted to an
eigenspace with respect to the basis of Dirichlet eigenfunctions.

\begin{theo}\label{ealphafalpha}
In the setting of Hypotheses \ref{hyp1}, \ref{hypIMj} and \eqref{specHepusw}, \eqref{valpha}, set
$\mathcal{G}_v:=\left(\skpd{v_\alpha}{v_\beta}\right)_{\alpha,\beta\in\mathcal{J}}$, the Gram-matrix, and
$\vec{e}:=\vec{v}\mathcal{G}_v^{-\frac{1}{2}}$, the orthonormalization of
$\vec{v}:=(v_{1,1}.\ldots, v_{m,n_m})$. Let $\Pi_\F$ be the orthogonal projection onto $\F$ and set $f_\alpha=\Pi_{\F} e_\alpha$. For
$\mathcal{G}_f=\left(\skpd{f_\alpha}{f_\beta}\right)$, we
choose
$\vec{g}:=\vec{f}\mathcal{G}_f^{-\frac{1}{2}}$ as orthonormal basis of $\F$.

Then there exists $\ep_0>0$ such that for all $\sigma < S$ and $\ep\in (0,\ep_0]$ the following holds.
\ben
\item The matrix of $H_\ep|_{\F}$ with respect to $\vec{g}$
is given by
\[  \diag \bigl(\mu_{1,1}, \ldots, \mu_{m,n_m}\bigl) +
\left(\tilde{w}_{\alpha,\beta}\right)_{\alpha,\beta\in\mathcal{J}} +
O\left(e^{-\frac{2\sigma}{\ep}}\right) \]
where
\[ \tilde{w}_{\alpha,\beta} = \frac{1}{2}(w_{\alpha\beta}+ w_{\beta\alpha}) =
O\left(\ep^{-N}e^{-\frac{d(x_{j(\alpha)},x_{j(\beta)})}{\ep}}\right)  \]
with
\begin{equation}\label{interact}
w_{\alpha,\beta} = \skpd{v_\alpha}{(\id -\id_{M_{j(\beta)}})T_\ep v_\beta} =
\sum_{\natop{x\in\disk}{x\notin M_{j(\beta)}}}
\sum_{\gamma\in\disk}
a_\gamma(x; \ep)v_\beta(x+\gamma) v_\alpha (x)
\end{equation}
and
$\tilde{w}_{\alpha,\beta} = 0$ for $j(\alpha) = j(\beta)$. The remainder $O\bigl(e^{-\frac{2\sigma}{\ep}}\bigr)$ is estimated 
with respect to the operator norm.
\item
There exists a bijection 
\[ b:\spec (H_\ep|_{\F})\ra \spec
\bigl((\mu_\alpha \delta_{\alpha\beta} +
\tilde{w}_{\alpha\beta})_{\alpha,\beta\in\mathcal{J}}\bigr)\quad\text{such that}\quad |b(\lambda) - \lambda|
= \expord{-2\sigma}\]
where the eigenvalues are counted with multiplicity.
\een
\end{theo}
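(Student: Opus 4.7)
\textbf{Proof proposal for Theorem \ref{ealphafalpha}.} The plan is to compute the matrix of $H_\ep|_{\F}$ in two stages: first express $H_\ep$ with respect to the almost-orthonormal Dirichlet system $\vec{v}$, and then transfer the result to the honestly orthonormal basis $\vec g$ of $\F$ via the projection $\Pi_\F$, exploiting the fact (a direct consequence of the Agmon-type decay estimates from \cite{kleinro5}, Proposition 3.1, combined with the spectral separation given in Hypothesis \ref{hypIMj}(3)) that $\vec\dist(\E,\F)=\|\Pi_\E-\Pi_\F\Pi_\E\|=O(e^{-\sigma/\ep})$ for any $\sigma<S$.

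First I would verify that the Dirichlet eigenfunctions are almost orthonormal. For $\alpha\neq\beta$ with $j(\alpha)\neq j(\beta)$, since $v_\alpha$ is supported in $M_{j(\alpha)}$ and decays exponentially in $d^{j(\alpha)}$, the overlap $\skpd{v_\alpha}{v_\beta}$ is bounded by $e^{-(d(x^{j(\alpha)},x)+d(x,x^{j(\beta)}))/\ep}$ summed over $x\in M_{j(\alpha)}\cap M_{j(\beta)}$, hence $O(\ep^{-N}e^{-S_0/\ep})$ by the triangle inequality for $d$; for $j(\alpha)=j(\beta)$ the vectors are orthogonal by construction. Thus $\mathcal{G}_v=\id + O(e^{-(S_0-\delta)/\ep})$ and its symmetric inverse square root expands as $\mathcal{G}_v^{-1/2}=\id+O(e^{-(S_0-\delta)/\ep})$, so $\vec{e}=\vec{v}+O(e^{-(S_0-\delta)/\ep})$ in operator norm.

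Next I would compute the matrix of $H_\ep$ in the basis $\vec v$. The crucial identity is
\[
H_\ep v_\beta \;=\; H_\ep^{M_{j(\beta)}} v_\beta + (H_\ep-H_\ep^{M_{j(\beta)}})v_\beta \;=\; \mu_\beta v_\beta + (\id-\id_{M_{j(\beta)}}) T_\ep v_\beta,
\]
since $V_\ep$ is a multiplication operator and so the only ``leakage'' across $\partial M_{j(\beta)}$ comes from $T_\ep$. Pairing with $v_\alpha$ produces
\[
\skpd{v_\alpha}{H_\ep v_\beta} \;=\; \mu_\beta \skpd{v_\alpha}{v_\beta} + w_{\alpha\beta}.
\]
Passing to $\vec e$ costs at most $O(e^{-(2S_0-\delta)/\ep})$ by the previous paragraph and the weighted $\ell^2$ bounds on $v_\alpha$; after symmetrization (forced by selfadjointness of $H_\ep$) the matrix of $H_\ep$ in $\vec e$ is $\diag(\mu_\alpha) + (\tilde w_{\alpha\beta}) + O(e^{-2\sigma/\ep})$ for any $\sigma<S_0$. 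The estimate $w_{\alpha\beta}=O(\ep^{-N}e^{-d(x^{j(\alpha)},x^{j(\beta)})/\ep})$ follows by splitting the sum in \eqref{interact} through the triangle inequality $d(x^{j(\alpha)},x^{j(\beta)})\leq d(x^{j(\alpha)},x)+d(x,x+\gamma)+d(x+\gamma,x^{j(\beta)})$, exponential decay of $v_\alpha,v_\beta$, and the weight estimate \eqref{agammasupnorm2}.

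Now I transfer to $\vec g$ in $\F$. Writing $f_\alpha=\Pi_\F e_\alpha = e_\alpha + (\Pi_\F-\Pi_\E) e_\alpha$ and using $\|(\Pi_\F-\Pi_\E)\Pi_\E\|=O(e^{-\sigma/\ep})$ shows $\mathcal{G}_f=\id+O(e^{-\sigma/\ep})$, so $g_\alpha=e_\alpha+O(e^{-\sigma/\ep})$; moreover $H_\ep(g_\alpha-e_\alpha)=H_\ep\Pi_\F(e_\alpha-e_\alpha)+\ldots$ is also of the same order using the spectral localization and boundedness of $H_\ep$ on $\F$. Substituting, the matrix of $H_\ep|_\F$ in $\vec g$ differs from the matrix in $\vec e$ by at most $O(e^{-2\sigma/\ep})$, giving statement (1). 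For statement (2), both matrices are selfadjoint and their difference has operator norm $O(e^{-2\sigma/\ep})$; the labelled bijection $b$ and the eigenvalue estimate then follow at once from the standard Lidskii/Weyl inequality for eigenvalues of selfadjoint matrices under selfadjoint perturbation. The main obstacle, and the one requiring the most care, is controlling all remainders \emph{uniformly} in the operator norm on $\F$: one must repeatedly invoke the weighted $\ell^2$-estimates of \cite{kleinro5} together with \eqref{agammasum} and the local Lipschitz character of $d$ to prevent polynomial prefactors $\ep^{-N}$ from spoiling the exponential $e^{-2\sigma/\ep}$ for any prescribed $\sigma<S$.
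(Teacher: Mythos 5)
Your overall strategy is the standard Helffer--Sj\"ostrand route (and is essentially the one of \cite{kleinro4}, from which the present paper only quotes the theorem without reproving it): the identity $H_\ep v_\beta=\mu_\beta v_\beta+(\id-\id_{M_{j(\beta)}})T_\ep v_\beta$, almost-orthonormality of the Dirichlet system, transfer to $\F$, and Weyl's inequality for part (2). But as written your argument does not deliver the remainder $O(e^{-2\sigma/\ep})$; the two places where the quadratic gain must be produced are exactly the places you pass over. First, the transfer from $\vec e$ to $\vec g$: from $g_\alpha=e_\alpha+O(e^{-\sigma/\ep})$ alone, the matrix elements $\skpd{g_\alpha}{H_\ep g_\beta}$ and $\skpd{e_\alpha}{H_\ep e_\beta}$ differ by first-order terms such as $\skpd{g_\alpha-e_\alpha}{H_\ep g_\beta}=O(\ep\, e^{-\sigma/\ep})$, which for small $\ep$ is much larger than $e^{-2\sigma/\ep}$; and your line ``$H_\ep(g_\alpha-e_\alpha)=H_\ep\Pi_\F(e_\alpha-e_\alpha)+\ldots$'' is vacuous. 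The quadratic smallness comes from $[H_\ep,\Pi_\F]=0$: one writes $\skpd{f_\alpha}{H_\ep f_\beta}=\skpd{e_\alpha}{H_\ep e_\beta}-\skpd{(\id-\Pi_\F)e_\alpha}{H_\ep(\id-\Pi_\F)e_\beta}$ and $\mathcal{G}_f=\id-\bigl(\skpd{(\id-\Pi_\F)e_\alpha}{(\id-\Pi_\F)e_\beta}\bigr)_{\alpha\beta}=\id+O(e^{-2\sigma/\ep})$, so that every correction is a product of two factors of size $O(e^{-\sigma/\ep})$, together with a bound on $H_\ep(\id-\Pi_\F)e_\beta=(\id-\Pi_\F)H_\ep e_\beta$ obtained from $H_\ep v_\delta=\mu_\delta v_\delta+O(\ep^{-N}e^{-S_0/\ep})$. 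This mechanism is the heart of the theorem and is missing from your sketch.

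Second, the symmetrization. Conjugating $\bigl(\skpd{v_\alpha}{H_\ep v_\beta}\bigr)=\mathcal{G}_v D+W$, with $D=\diag(\mu_\alpha)$ and $W=(w_{\alpha\beta})$, by $\mathcal{G}_v^{-\frac{1}{2}}$ produces, writing $\mathcal{G}_v=\id+E$, the commutator term $\tfrac{1}{2}(ED-DE)_{\alpha\beta}=\tfrac{1}{2}E_{\alpha\beta}(\mu_\beta-\mu_\alpha)$, of size $\ep^{1-N}e^{-S_{j(\alpha)j(\beta)}/\ep}$. Since only $S\le S_0$ is guaranteed while $2S>S_0$ is allowed (and is in fact assumed later, in Hypothesis \ref{hypkj}), this term is in general not $O(e^{-2\sigma/\ep})$ for $\sigma$ close to $S$, so it cannot simply be absorbed into the error; it must be converted via the identity $(\mu_\beta-\mu_\alpha)\skpd{v_\alpha}{v_\beta}=w_{\beta\alpha}-w_{\alpha\beta}$ (symmetry of $H_\ep$ applied to your own pairing identity), which combines with $W$ to give exactly $\tilde w_{\alpha\beta}=\tfrac{1}{2}(w_{\alpha\beta}+w_{\beta\alpha})$ up to genuinely quadratic remainders $O(\|E\|\,\|W\|+\|E\|^2\|D\|)=O(e^{-2\sigma/\ep})$. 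Your phrase ``after symmetrization (forced by selfadjointness)'' names the phenomenon, but this cancellation is the content of the step; without it, and without the projection argument above, the claimed remainder, and hence also part (2), is not established. The remaining ingredients of your proposal (almost-orthonormality of $\vec v$, the bound on $w_{\alpha\beta}$ via the triangle inequality and \eqref{agammasupnorm2}, the vanishing $w_{\alpha\beta}=0$ for $j(\alpha)=j(\beta)$ because $v_\alpha$ is supported in $M_{j(\alpha)}$, and Weyl's inequality once part (1) is secured) are sound.
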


\end{appendix}

\end{document}